\newtheorem{theorem}{Theorem}[section]
\newtheorem{prop}{Proposition}[section]
\newtheorem{lemma}{Lemma}[section]
\newtheorem{cor}{Corollary}[theorem]
\newtheorem{de}{Definition}[section]
\newtheorem{con}{Conjecture}[section]
\newcommand{\C}{\mathcal{C}_\nu}
\newcommand{\D}{\mathcal{D}}
\newcommand{\Q}{\mathcal{Q}}
\begin{document}
	\sloppy
	\begin{frontmatter}
		
		\title{Inverses of $r$-primitive $k$-normal elements over finite fields}
		\author[1]{Mamta Rani}
		\ead{mamta11singla@gmail.com}
		\author[1]{Avnish K. Sharma}
		\ead{avkush94@gmail.com}
		\author[2]{Sharwan K. Tiwari\corref{cor1}}
		\ead{shrawant@gmail.com}
		\cortext[cor1]{Corresponding author}
		\author[1]{Anupama Panigrahi}
		\ead{anupama.panigrahi@gmail.com }
		\address[1]{Department of Mathematics, University of Delhi, New Delhi, 110007, India}
		\address[2]{Scientific Analysis Group, Defence Research $\&$ Development Organization, Metcalfe House, Delhi, 110054, India}
\begin{abstract}
	Let $r$, $n$ be positive integers, $k$ be a non-negative integer and $q$ be any prime power such that $r\mid q^n-1.$ An element $\alpha$ of the finite field $\mathbb{F}_{q^n}$ is called an {\it $r$-primitive} element, if its multiplicative order is $(q^n-1)/r$,  and it is called a {\it $k$-normal} element over $\mathbb{F}_q$, if the greatest common divisor of the polynomials $m_\alpha(x)=\sum_{i=1}^{n} \alpha^{q^{i-1}}x^{n-i}$ and $x^n-1$ is of degree $k.$ In this article, we define the characteristic function for the set of $k$-normal elements, and with the help of this, we establish a sufficient condition for the existence of an element $\alpha$ in $\mathbb{F}_{q^n}$, such that $\alpha$ and $\alpha^{-1}$ both are simultaneously $r$-primitive and $k$-normal over $\mathbb{F}_q$. Moreover, for $n>6k$, we show that there always exists an $r$-primitive and $k$-normal element $\alpha$ such that $\alpha^{-1}$ is also $r$-primitive and $k$-normal in all but finitely many fields $\mathbb{F}_{q^n}$ over $\mathbb{F}_q$, where $q$ and $n$ are such that $r\mid q^n-1$ and there exists a $k$-degree polynomial $g(x)\mid x^n-1$ over $\mathbb{F}_q$. In particular, we discuss the existence of an element $\alpha$ in $\mathbb{F}_{q^n}$ such that $\alpha$ and $\alpha^{-1}$ both are simultaneously $1$-primitive and $1$-normal over $\mathbb{F}_q$. 
\end{abstract}
\begin{keyword}
	Finite fields\sep $r$-Primitive elements\sep $k$-Normal elements\sep Additive and multiplicative characters.
	\MSC[$2020$] 12E20\sep 11T23.
\end{keyword}
		\end{frontmatter}
	\section{Introduction}\label{Sec1}
	Let $\mathbb{F}_{q^n}$ be the field extension of degree $n$ over $\mathbb{F}_{q},$ where $q$ be a prime power and $n\in \mathbb{N}$. We recall that, the multiplicative group $\mathbb{F}_{q^n}^*$ is cyclic, and an element $\alpha \in \mathbb{F}_{q^n}^*$ is called {\it primitive}, if its multiplicative order is $q^n-1.$ Let $r$ be a divisor of $q^n-1$, then an element $\alpha\in \mathbb{F}_{q^n}^*$ is called {\it $r$-primitive}, if its multiplicative order is $(q^n-1)/r$. Clearly, a 1-primitive element in $\mathbb{F}_{q^n}$ is actually a primitive element. Moreover, the existence of $r$-primitive elements is obvious, because the multiplicative order of the $r^{\mathrm{th}}$ power of a primitive element is  $(q^n-1)/r$. Primitive elements have many applications in the field of cryptography \cite{diffie,mullennote,blum} due to their highest multiplicative order.  Moreover, $r$-primitive elements can  also be considered as high ordered elements for small values of $r$.  Therefore, in many applications, $r$-primitive elements may replace primitive elements.
	
	Again, recall that, the finite field $\mathbb{F}_{q^n}$ is also an $\mathbb{F}_{q}$-vector space, and an element $\alpha \in \mathbb{F}_{q^n}$ is said to be {\it normal} over $\mathbb{F}_{q}$, if the set $\{\alpha, \alpha^q, \ldots, \alpha^{q^{n-1}}\}$ forms a basis (called {\it normal basis}) of $\mathbb{F}_{q^n}$ over $\mathbb{F}_{q}$. Normal bases have many applications in the computational theory due to their efficient  implementation in finite field arithmetic. For more detail on normal elements one may refer \cite{gao} and the references therein. 
	
	From \cite[Theorem 2.39]{Nieder}, we know that an element $\alpha \in \mathbb{F}_{q^n}$ is normal over $\mathbb{F}_{q}$ if and only if the polynomials $m_\alpha(x)=\alpha x^{n-1}+\alpha^q x^{n-2}+\ldots+\alpha^{q^{n-2}}x+\alpha^{q^{n-1}}$ and $x^n-1$ are relatively prime in $\mathbb{F}_{q^n}[x].$ Motivated by this definition, Huczynska et al. \cite{huc13} introduced the concept of $k$-normal elements and gave the following definition.
	\begin{de}
		An element $\alpha \in \mathbb{F}_{q^n}$ is said to be {\it $k$-normal} over $\mathbb{F}_{q},$ if the polynomials $m_\alpha(x)=\alpha x^{n-1}+\alpha^q x^{n-2}+\ldots+\alpha^{q^{n-2}}x+\alpha^{q^{n-1}}$ and $x^n-1$ has greatest common divisor of degree $k$ over $\mathbb{F}_{q}$.
	\end{de}
	 \noindent
	 From the above definition, it is clear that $0\leq k\leq n-1.$ We call an element $\alpha\in \mathbb{F}_{q^n}$, {\it $r$-primitive $k$-normal}, if it is both $r$-primitive and $k$-normal over $\mathbb{F}_{q}.$ From the above definition, if $k=0$, then $\alpha$ is normal over $\mathbb{F}_{q}$. Moreover, Huczynska et al. \cite{huc13} characterized the $k$-normal elements in $\mathbb{F}_{q^n}$ over $\mathbb{F}_q$ and established bounds for the number of such elements, and proved the existence of an element $\alpha$ which is both $1$-primitive and $1$-normal over $\mathbb{F}_{q}$ for $n\geq 6$ if $q\geq 11$, and for $n\geq 3$ if $3\leq q\leq 9,$ where $\mathrm{gcd}(q,n)=1.$
	 In 1987, Lenstra and Schoof \cite{Lenstra} proved the existence of primitive normal elements (i.e. $1$-primitive $0$-normal elements) for arbitrary finite fields. In 2014, Anderson and Mullen \cite{mullConj} conjectured that, for a prime $p\geq 5$ and an integer $n\geq3$, there always exists an element $\alpha\in \mathbb{F}_{p^n}$ of order $(p^n-1)/r$, which is $k$-normal for $(r,k)=(1,1), (2,0), (2,1)$. In 2018, Reis and Thomson in \cite{reis18} settled their conjecture for $(r,k)=(1,1)$, for arbitrary $q$ and $n\geq3$. Finally, in 2019, Kapetanakis and Reis \cite{kapevar} completely settled the above conjecture for arbitrary prime power $q$. 
	 
	 Recently, Aguirre and Neumann \cite{newman} showed that, there exists a $1$-primitive $2$-normal element in $\mathbb{F}_{q^n}$ over $\mathbb{F}_q$ if and only if $n\geq5$ and $\mathrm{gcd}(q^3-q, n) \neq 1$ or $n =4$ and $q \equiv 1\hspace{.5mm} (\mathrm{mod}\hspace{.5mm} 4).$ In this direction, we \cite{Rani2} generalized the above existence problems to arbitrary $r$ and $k$, and established a sufficient condition for the existence of an element $\alpha$ in $\mathbb{F}_{q^n}$ which is simultaneously $r$-primitive and $k$-normal over $\mathbb{F}_q.$ Moreover, in particular, we gave the following complete existence of $2$-primitive $2$-normal elements.
	\begin{theorem}\label{T1.1}
		Let $q$ be a prime power and $n$ be a positive integer. Then, there always exists a $2$-primitive $2$-normal element in $\mathbb{F}_{q^n}$ over $\mathbb{F}_q$ if and only if $q$ is odd, and either $n\geq 5$ and $\mathrm{gcd}(q^3-q,n)\neq1$ or $n=4$ and  $q\equiv1(\mathrm{mod}\hspace{.5mm}4)$.
	\end{theorem}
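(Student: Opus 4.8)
The plan is to prove both implications: \emph{necessity} via a structural analysis of the admissible $\mathbb{F}_q$-orders, and \emph{sufficiency} via the character-sum-plus-sieve machinery. For necessity, note first that a $2$-primitive element has multiplicative order $(q^n-1)/2$, so $2\mid q^n-1$ and $q$ must be odd. A $2$-normal element $\alpha$ has $\mathbb{F}_q$-order (the monic generator of its annihilator in $\mathbb{F}_q[x]$, with $x$ acting as the Frobenius $\sigma$) of degree $n-2$, hence equal to $F:=(x^n-1)/g$ for some monic $g\mid x^n-1$ with $\deg g=2$; I would then determine, for each admissible $g$, whether such an $F$ can be the $\mathbb{F}_q$-order of an element of order $(q^n-1)/2$. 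The cases $n=1,2$ are vacuous, since then $k=2>n-1$. For $n=3$ the polynomial $F=x-a$ is linear with $a^3=1$, giving $\alpha^{q-1}=a$ and hence $\mathrm{ord}(\alpha)\mid 3(q-1)$, which never equals $(q^3-1)/2=\tfrac{q-1}{2}(q^2+q+1)$. For $n=4$ with $q\equiv3\pmod 4$ the factorisation $x^4-1=(x-1)(x+1)(x^2+1)$ offers only the degree-$2$ divisors $x^2-1$ and $x^2+1$, which force respectively $\alpha\in\mathbb{F}_{q^2}$ or $\alpha^{q^2-1}=-1$; since $(q^2+1)/2$ is an odd integer exceeding $2$, in neither case can $\mathrm{ord}(\alpha)=(q^4-1)/2$. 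Finally, for $n\ge5$ one shows from the $\mathbb{F}_q$-factorisation of $x^n-1$ into cyclotomic pieces (each $\Phi_\ell$ splitting into linear or quadratic factors according as $\ell\mid q-1$ or $\ell\mid q+1$, with $(x-1)^2\mid x^n-1$ exactly when $p\mid n$) that $x^n-1$ possesses a degree-$2$ divisor over $\mathbb{F}_q$ if and only if $\gcd(q^3-q,n)\neq1$, and that every such $g$ is then automatically admissible, since $\deg F=n-2>n/2$ forces $F\nmid x^m-1$ for every proper divisor $m$ of $n$, ruling out any subfield obstruction. Together these give exactly the stated conditions.

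For sufficiency, assume $q$ odd together with the stated condition on $n$, and fix an admissible degree-$2$ divisor $g\mid x^n-1$ (arbitrary when $n\ge5$; when $n=4$, $q\equiv1\pmod 4$, the polynomial $x^4-1$ splits completely and one chooses $g$ so that $F=(x^4-1)/g\notin\{x^2-1,x^2+1\}$). Let $N_g$ count the $\alpha\in\mathbb{F}_{q^n}^{*}$ that are $2$-primitive and have $\mathbb{F}_q$-order exactly $F$; such $\alpha$ are automatically $2$-normal, so it suffices to prove $N_g>0$. I would write $N_g=\sum_{\alpha}\Omega_2(\alpha)\,\Lambda_F(\alpha)$, where $\Omega_2$ is the characteristic function of the $2$-primitive set, expanded as a M\"obius-weighted sum of multiplicative characters of $\mathbb{F}_{q^n}^{*}$, and $\Lambda_F$ is the characteristic function of ``$\mathbb{F}_q$-order equals $F$'', expanded through the polynomial M\"obius function as a weighted sum of additive characters (the characteristic function for the $k$-normal property, specialised to a single divisor $g$). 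On interchanging the sums, the pair of trivial characters contributes the main term, of size $\theta\,\Phi_q(F)$ with $\Phi_q(F)\asymp q^{\,n-2}$ (here $\Phi_q$ denotes the $\mathbb{F}_q[x]$-analogue of Euler's totient, and $\theta>0$ is an explicit multiplicative density), while every remaining term is a mixed, Gauss-type character sum over $\mathbb{F}_{q^n}^{*}$ of modulus at most $q^{\,n/2}$ by the Weil bound. Collecting, one gets $N_g>0$ as soon as
\[
q^{\,n/2-2}\ >\ c\,W(q^n-1)\,W(F),
\]
where $W(\cdot)$ counts squarefree divisors and $c$ absorbs the density factors; since $W(m)=m^{o(1)}$, this holds for all but finitely many admissible $(q,n)$.

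To cover the remaining range I would run a sieving argument in the style of Cohen and Huczynska: replacing the radical of $q^n-1$ by a partial product of its prime divisors, at the cost of an explicit correction, substantially relaxes the inequality above, and iterating with a judicious choice of sieving primes reduces the set of uncovered pairs $(q,n)$ to a finite, explicit list, which is then settled by direct computation (which also confirms that no sporadic exceptions arise in the admissible range). The main obstacle is exactly this low-dimensional regime, and above all $n=4$ with $q\equiv1\pmod 4$: there the exponent $n/2-2$ vanishes, so the naive Weil estimate is vacuous, and one must design the sieve carefully enough that it nonetheless covers all but finitely many $q$ in this infinite family --- typically the most delicate and computation-heavy step, occasionally requiring ad hoc supplements. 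On the structural side, the subtle point is that for $n\le4$ the genuine obstruction is not the absence of degree-$2$ divisors of $x^n-1$ but the interplay between the prescribed $\mathbb{F}_q$-order and the multiplicative order, and it is this that singles out the congruence $q\equiv1\pmod 4$.
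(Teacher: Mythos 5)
First, a point of reference: this paper does not prove Theorem \ref{T1.1} at all --- it is quoted from the authors' earlier work \cite{Rani2}, so there is no internal proof to compare against; your sketch has to stand on its own, measured against the general method of that line of work (which your outline mirrors: characteristic functions, Weil-type bounds, Cohen--Huczynska sieving, and computation for leftover pairs). Your necessity analysis is essentially sound: $q$ odd is forced by $2\mid q^n-1$; the cases $n\le 2$ are vacuous; the $n=3$ and $n=4$, $q\equiv 3\pmod 4$ arguments via the possible $\mathbb{F}_q$-orders correctly bound $\mathrm{ord}(\alpha)$ below $(q^n-1)/2$; and the equivalence ``$x^n-1$ has a degree-$2$ divisor over $\mathbb{F}_q$ iff $\gcd(q^3-q,n)\neq 1$'' is correct (repeated factor iff $p\mid n$, two linear factors iff $\gcd(n,q-1)>1$, irreducible quadratic iff $\gcd(n,q+1)$ has an odd prime divisor, and conversely $\gcd(q^3-q,n)=1$ forces $\gcd(n,q^2-1)=1$ and squarefreeness, leaving only $x-1$).

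The genuine gap is the sufficiency for the infinite family $n=4$, $q\equiv 1\pmod 4$. There the main term of your count is of order $q^{n-k}=q^{2}$ while every nontrivial mixed character sum is only bounded by about $q^{n/2+k'}\sim q^{2}$, so the decisive inequality has left-hand exponent $n/2-k=0$; since the right-hand side always contains factors $\ge 2$ (and the sieve quantity $\mathcal{S}\ge 2$), no choice of sieving primes or divisors can ever make it hold --- sieving only shrinks the $W(\cdot)$ factors, it cannot repair a vacuous exponent. This is exactly the phenomenon the present paper records for its own criterion (``Inequality \eqref{Eq10} can never hold for $n\le 2\vartheta$''). Consequently your claim that a carefully designed sieve ``covers all but finitely many $q$'' in the $n=4$ family is false as stated: that case cannot be settled by the character-sum-plus-sieve machinery for even a single $q$, and in the literature (Aguirre--Neumann for $1$-primitive $2$-normal, and \cite{Rani2} for the $2$-primitive case) it is handled by a separate, explicit structural/counting argument special to $n=4$. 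Your proposal acknowledges the difficulty but supplies no such argument, so the ``if'' direction of the theorem is not actually proved for $n=4$, $q\equiv 1\pmod 4$. The remaining sufficiency range $n\ge 5$ (exponent $n/2-2\ge 1/2$) is fine in principle under your outline, modulo the usual finite computations.
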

	
	Notice that, the inverse of an $r$-primitive element is also $r$-primitive in $\mathbb{F}_{q^n}$ for $r\mid q^n-1.$ But this may not be true in case of $k$-normal elements. For example, if $k=0$, then there does not exist any normal element $\alpha\in \mathbb{F}_8$ over $\mathbb{F}_2$ for which $\alpha^{-1}$ is also normal element in $\mathbb{F}_{2^3}$ over $\mathbb{F}_2$.
	In \cite{TianQi}, Tian and Qi showed the existence of a primitive normal element $\alpha\in \mathbb{F}_{q^n}$ over $\mathbb{F}_q$ whose inverse is also primitive normal over $\mathbb{F}_q$ for all $q$ and $n\geq 32.$ Later, Cohen and Huczynska \cite{CoHuc} completely settled this problem by using sieving technique and gave the following result.
	\begin{theorem}[Strong Primitive Normal Basis Theorem] There exists a $(1-)$ primitive $(0-)$ normal element $\alpha \in \mathbb{F}_{q^n};$ $n\geq 2,$ such that $\alpha^{-1}$ is also a primitive normal element over $\mathbb{F}_{q}$ unless $(q,n)$ is one of the pair $(2,3),\ (2,4),\ (3,4),\ (4,3),\ (5,4).$	
	\end{theorem}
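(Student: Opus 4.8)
The plan is to prove this by the character-sum-and-sieve method that underlies results such as Theorem~\ref{T1.1} and its predecessors; I sketch the main steps. Write $Q=q^{n}$. For a divisor $e\mid Q-1$ and a monic divisor $f\mid x^{n}-1$ over $\mathbb{F}_{q}$, recall the notions of an \emph{$e$-free} element of $\mathbb{F}_{q^{n}}^{*}$ (not a $d$-th power in $\mathbb{F}_{q^{n}}$ for any prime $d\mid e$) and of an \emph{$f$-free} element of $\mathbb{F}_{q^{n}}$ with respect to the $\mathbb{F}_{q}[x]$-module action $x\cdot\alpha=\alpha^{q}$: an element is primitive iff it is $(Q-1)$-free, and it is normal over $\mathbb{F}_{q}$ iff it is $(x^{n}-1)$-free. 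A useful first reduction is that $\alpha$ is primitive if and only if $\alpha^{-1}$ is (since $\gcd(Q-2,Q-1)=1$), so the quantity to make positive is
\[
N \;=\; \#\bigl\{\alpha\in\mathbb{F}_{q^{n}}^{*} : \alpha\text{ is }(Q-1)\text{-free and both }\alpha,\ \alpha^{-1}\text{ are }(x^{n}-1)\text{-free}\bigr\}.
\]
Using the multiplicative characters of $\mathbb{F}_{q^{n}}^{*}$ one writes the characteristic function of the $(Q-1)$-free elements, and using the additive characters of $\mathbb{F}_{q^{n}}$ together with the module structure one writes the characteristic function of the $(x^{n}-1)$-free elements (each as a weighted sum over characters indexed by divisors of $Q-1$, resp.\ of $x^{n}-1$). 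Multiplying the single multiplicative and the two additive characteristic functions and expanding, one expresses $N$ as a sum of inner sums of the form $\sum_{\alpha\in\mathbb{F}_{q^{n}}^{*}}\chi(\alpha)\,\psi(a\alpha+b\alpha^{-1})$, with $\chi$ a multiplicative character, $\psi$ the canonical additive character, and $a,b\in\mathbb{F}_{q^{n}}$ determined by the chosen characters.

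The second step is to estimate these inner sums. The all-trivial term ($\chi$ trivial, $a=b=0$) gives the main term $\sim q^{n}$, while every other term is a Gauss or Kloosterman sum over $\mathbb{F}_{q^{n}}$, hence $O(q^{n/2})$ with an absolute implied constant by Weil's bounds. Collecting terms gives an inequality of the shape
\[
N \;\ge\; \theta(Q-1)\,\theta(x^{n}-1)^{2}\,q^{n}\Bigl(1-c\,W(Q-1)\,W(x^{n}-1)^{2}\,q^{-n/2}\Bigr),
\]
where $W(\cdot)$ is the number of squarefree (monic) divisors, $\theta(\cdot)$ the corresponding Euler-type density factor, and $c$ an absolute constant. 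Hence $N>0$ whenever
\[
q^{n/2} \;>\; c\,W(Q-1)\,W(x^{n}-1)^{2}.
\]
Using $W(x^{n}-1)=2^{\omega}\le 2^{n}$ (where $\omega$ is the number of distinct irreducible factors of $x^{n}-1$, usually far smaller than $n$) and the divisor bound $W(Q-1)\le c_{\varepsilon}q^{\varepsilon n}$, this criterion holds for all pairs $(q,n)$ outside a finite set.

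To reduce the surviving finite set to one that is feasible to check, I would apply a suitable form of Cohen's sieving inequality: fixing ``cores'' $e_{0}\mid Q-1$, $f_{0}\mid x^{n}-1$ and sieving the remaining prime factors $p_{1},\dots,p_{s}$ of $Q-1$ and irreducible factors $g_{1},\dots,g_{t}$ of $x^{n}-1$, one bounds $N$ below by a signed sum of counts with data $(e_{0}p_{i},f_{0})$, $(e_{0},f_{0}g_{j})$ and $(e_{0},f_{0})$ (the last with a negative coefficient of size $O(s+t)$); re-running the character-sum estimate on each term produces a weaker positivity criterion, roughly $q^{n/2}>\tfrac{c'}{\delta}\,W(e_{0})\,W(f_{0})^{2}$, valid provided $\delta=1-\sum_{i}2/p_{i}-\sum_{j}2/(q^{\deg g_{j}}-1)>0$. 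A judicious choice of which factors to sieve shrinks the inequality by a large factor and disposes of all but a short explicit list of small $(q,n)$; for each of these I would decide existence by directly enumerating $\mathbb{F}_{q^{n}}$, which isolates precisely the exceptions $(2,3),(2,4),(3,4),(4,3),(5,4)$.

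The main obstacle is not any single estimate but two pieces of bookkeeping. First, controlling the additive side: the factor $W(x^{n}-1)$ degrades when $n$ has many divisors coprime to $q$, so one must use the finer decomposition of the $(x^{n}-1)$-free characteristic function adapted to the factorization of $x^{n}-1$ and absorb as much of $x^{n}-1$ as possible into the sieving core $f_{0}$. Second, imposing the property on $\alpha^{-1}$ as well squares $W(x^{n}-1)$, and this is exactly what forces the borderline cases — $n\in\{3,4\}$ with very small $q$, where $q^{n/2}$ is tiny — into a region in which even the sieve barely fails; these are precisely where the genuine exceptions sit, so they must be handled by explicit computation rather than by the general bound.
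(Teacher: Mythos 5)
Your proposal is sound and follows essentially the same route as the source this theorem is quoted from (Cohen--Huczynska \cite{CoHuc}) and as the methodology this paper itself uses for its $r$-primitive $k$-normal analogues: characteristic functions for freeness expanded into mixed character (Gauss/Kloosterman-type) sums bounded via Weil-type estimates, a sieving inequality to shrink the exceptional range, and explicit search to isolate the genuine exceptions $(2,3),(2,4),(3,4),(4,3),(5,4)$. Note only that the paper does not prove this statement itself but cites it, so your sketch is a reconstruction of the cited argument rather than an alternative to anything proved here.
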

	Motivated by the above theorem, in this article, we discuss the existence of a pair $(\alpha,\alpha^{-1})$ of $r$-primitive $k$-normal elements in $\mathbb{F}_{q^n}$ over $\mathbb{F}_q$. For this, we first define the characteristic function in Section \ref{Sec3} for the set of $k$-normal elements, which plays a crucial role in establishing a sufficient condition for the existence of such pairs in Section \ref{Sec4}. Moreover, we conclude that, for $n>6k$, there exists a pair $(\alpha,\alpha^{-1})$ of  $r$-primitive $k$-normal elements in all but finitely many fields $\mathbb{F}_{q^n}$ over $\mathbb{F}_q.$ In particular, we gave the following result in Section \ref{Sec5}.
		\begin{theorem}\label{T1.3}
			Let $q$ be a prime power and $n\geq 5$ be an integer. Then	
			\begin{enumerate}[$(i)$]
				\item For $n\geq 7$ and $q\geq 2$, there always exists a pair $(\alpha, \alpha^{-1})$ of primitive $1$-normal elements in $\mathbb{F}_{q^n}$ over $\mathbb{F}_q$.
				\item For $n=5$, $6$ and $q\geq 2$ such that $\mathrm{gcd}(q,n)=1$, there always exists a pair $(\alpha, \alpha^{-1})$ of primitive $1$-normal elements in $\mathbb{F}_{q^n}$ over $\mathbb{F}_q$ with the sole genuine exception $(4,5).$  
		    \end{enumerate}
		\end{theorem}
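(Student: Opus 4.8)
The plan is to derive Theorem~\ref{T1.3} from the general sufficient condition of Section~\ref{Sec4}, specialised to $r=1$ and $k=1$ (note that for $k=1$ the admissibility condition ``$x^n-1$ has a degree-$k$ divisor over $\mathbb{F}_q$'' is automatic, since $x-1\mid x^n-1$), and then to sharpen it by sieving. Recall the machinery: Section~\ref{Sec3} provides a characteristic function for the set of $k$-normal elements of $\mathbb{F}_{q^n}$, written as a sum over the monic divisors $g(x)$ of $x^n-1$ of degree $k$ and, for each such $g$, a M\"obius-type sum over the monic divisors of $(x^n-1)/g(x)$ weighted by additive characters of $\mathbb{F}_{q^n}$; this is combined with the usual multiplicative-character characteristic function for primitive elements. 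Imposing both conditions on $\alpha$ and on $\alpha^{-1}$ simultaneously, the number $N$ of pairs $(\alpha,\alpha^{-1})$ of primitive $1$-normal elements becomes a sum of mixed additive--multiplicative character sums over $\mathbb{F}_{q^n}$, and bounding the non-trivial terms by Weil's estimate together with Gauss-sum bounds yields a sufficient condition of the shape $q^{n/2}>B(q,n)$, where the left side is in fact diminished by a bounded power of $q$ coming from the two $k$-normality requirements (the resulting net exponent on $q$ being of order $n/2-3k$, consistent with the range $n>6k$ of the main theorem), and $B(q,n)$ is an explicit expression built from $W(q^n-1)$, the number of squarefree divisors of $x^n-1$ over $\mathbb{F}_q$, the number of linear divisors of $x^n-1$, and a small polynomial factor in $n$.

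Since $B(q,n)$ is essentially $2^{\omega(q^n-1)}$ up to polynomial factors, the raw inequality fails for $n\in\{5,6\}$ and for a wide band of $q$ even at $n=7$, so the next step is Cohen's sieving technique. Writing $q^n-1$ as a product of a core divisor and a set of $s$ sieving primes $p_1,\dots,p_s$ (and splitting $x^n-1$ analogously into a core polynomial together with sieving prime polynomials), and putting $\delta=1-\sum_{i=1}^{s}2/p_i$, one obtains, provided $\delta>0$, a relaxed sufficient condition in which the factor $W(q^n-1)$ of $B$ is replaced by a quantity of order $(2s-1)/\delta+2$. The right-hand side of the relaxed inequality grows only polynomially in $s$, so it is vastly easier to meet.

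For part~$(i)$, $n\ge7$, I would argue in two regimes. For $n$ beyond an explicit but modest bound, the estimate $\omega(q^n-1)=O\!\big(n\log q/\log\log(q^n)\big)$ makes $q^{\,n/2-3}$ dominate $B(q,n)$ for every $q\ge2$, so the unsieved condition holds outright; here one must also handle $\gcd(q,n)>1$, in which case $x^n-1$ is a perfect power whose distinct irreducible factors --- hence the relevant divisor counts --- coincide with those of $x^{m}-1$, $m$ the $p'$-part of $n$, keeping $B$ under control. For the finitely many remaining pairs $(q,n)$ with $n$ between $7$ and that bound and $q$ below an explicit threshold, the sieved inequality is verified pair by pair by choosing sieving primes adapted to the factorisations of $q^n-1$ and $x^n-1$; the few genuinely small fields that still resist the estimates are cleared by direct computation, and none of them turns out to be exceptional once $n\ge7$.

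For part~$(ii)$, $n\in\{5,6\}$ with $\gcd(q,n)=1$ (so that $x^n-1$ is squarefree and its divisor counts stay small), the same scheme is followed, but now the effective exponent on $q$ is so small that sieving is unavoidable, and the analysis splits according to $q\bmod n$: this residue fixes $\gcd(n,q-1)$, i.e.\ the number of linear factors of $x^n-1$ and thus the count of admissible $g$ and the squarefree-divisor count of $x^n-1$, as well as the small-prime part of $q^n-1$ governing which $p_i$ may be used. After sieving, only a short list of small $q$ survives for each of $n=5$ and $n=6$; these are settled by direct search, and exactly one pair, $(q,n)=(4,5)$, is found to admit no pair $(\alpha,\alpha^{-1})$ of primitive $1$-normal elements, which one confirms by an exhaustive search in $\mathbb{F}_{1024}$. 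I expect this last range to be the main obstacle: with the left-hand exponent as small as it is for $n=5,6$, the character-sum bound barely suffices, so the whole argument rests on an efficient, case-dependent selection of sieving primes (and, where needed, prime-power moduli), together with the bookkeeping required to certify that $(4,5)$ is a genuine exception rather than merely beyond the reach of the estimates.
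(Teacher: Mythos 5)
Your proposal follows essentially the same route as the paper: specialise the sufficient condition of Section~\ref{Sec4} to $r=1$, $k=1$ (with $\vartheta=2$ or $3$ according to whether $\mathrm{gcd}(q,n)=1$), improve it by the Cohen--Huczynska sieving of Lemma~\ref{L4.4} (including the choice of sieving primes of the form $ns+1$ as in Lemma~\ref{L5.4}), treat $\mathrm{gcd}(q,n)>1$ via the $p'$-part $n'$ of $n$, and clear the finitely many surviving pairs $(q,n)$ by explicit computer search, certifying $(4,5)$ as the sole genuine exception. This matches the paper's Lemmas~\ref{L5.1}, \ref{L5.3}, \ref{L5.5}, \ref{L5.6} and \ref{L5.7}, so no further comment is needed.
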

	
\section{Preliminaries}\label{Sec2}
	In this section, we recall some definitions and lemmas that are crucial for proving the results of this article.
	\subsection{Freeness}
	We begin with the following definition.
	\begin{de}[$e$-free element]
		Let $e\mid q^n-1$ and $\beta \in \mathbb{F}_{q^n}.$ Then $\beta$ is said to be $e$-free, if $\beta$ is not a $d^{th}$ power in $\mathbb{F}_{q^n}$ for any divisor $d>1$ of $e$. In other words, $\beta$ is $e$-free if and only if $e$ and $\frac{q^n-1}{\mathrm{ord}(\beta)}$ are co-prime, where $\mathrm{ord}(\beta)$ is the multiplicative order of $\beta$ in $\mathbb{F}_{q^n}^*.$ Clearly, an element $\beta \in \mathbb{F}_{q^n}$ is primitive if and only if it is $(q^n-1)$-free. 
	\end{de}
	\noindent
	 Let $h(x)=\sum_{i=0}^{m}a_ix^i\in\mathbb{F}_{q}[x]$ with $a_m\neq0$. Then, the additive group $\mathbb{F}_{q^n}$ forms an $\mathbb{F}_{q}[x]$- module under the action $h\circ\beta=\sum_{i=0}^{m}a_i\beta^{q^i}$. Notice that, $(x^n-1)\circ\beta=0$ for all $\beta\in \mathbb{F}_{q^n},$ which leads to the following definition.
	\begin{de}
		The $\mathbb{F}_q$-order $($denoted by $\mathrm{Ord}_q(\cdot))$ of an element $\beta\in \mathbb{F}_{q^n},$ is the least degree monic divisor $h(x)$ of $x^n-1$ such that $h\circ\beta=0$.
	\end{de}
	\noindent
	Similar to the $e$-free elements, we have the following definition for $f$-free elements.
	\begin{de}
		Let $h(x)\mid x^n-1$ and $\beta \in \mathbb{F}_{q^n}.$ Then, $\beta$ is called $h$-free, if $\beta\neq g\circ \gamma$ for any  $\gamma  \in \mathbb{F}_{q^n}$ and for any divisor $g(x)$ of $h(x)$ of positive degree. In other words, $\beta$ is $h$-free if and only if $h(x)$ and $\frac{x^n-1}{\mathrm{Ord}_q(\beta)}$ are co-prime.   Clearly, an element $\beta \in \mathbb{F}_{q^n}$ is normal if and only if it is $(x^n-1)$-free. 
	\end{de}

	In this article, we will use the following definition of $k$-normal elements.
	\begin{de}{\upshape\cite[Theorem 3.2]{huc13}}
		An element $\beta\in \mathbb{F}_{q^n}$ is $k$-normal over $\mathbb{F}_q$, if and only if the $\mathbb{F}_q$-order of $\beta$ is of degree $n-k$.
	\end{de}
\noindent
	The following lemma provides a way to construct the $k$-normal elements from a given normal element.
	\begin{lemma}{\upshape\cite[Lemma 3.1]{reis19}}\label{L2.1}
		Let $\beta\in\mathbb{F}_{q^n}$ be a normal element over $\mathbb{F}_q$ and $g\in \mathbb{F}_q[x]$ be
		a polynomial of degree $k$ such that $g$ divides $x^n-1$. Then $\alpha=g\circ\beta$ is
		$k$-normal.
	\end{lemma}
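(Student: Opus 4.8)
The plan is to reduce everything to the $\mathbb{F}_q$-order characterization of $k$-normality stated just above: an element $\gamma\in\mathbb{F}_{q^n}$ is $k$-normal over $\mathbb{F}_q$ precisely when $\mathrm{Ord}_q(\gamma)$ has degree $n-k$. Since $\beta$ is normal, it is $(x^n-1)$-free, which by the definition of the $\mathbb{F}_q$-order means $\mathrm{Ord}_q(\beta)=x^n-1$. Writing $g=c\tilde g$ with $\tilde g$ monic and $c\in\mathbb{F}_q^{*}$, note that $\alpha=g\circ\beta=c\,(\tilde g\circ\beta)$, and $k$-normality is unaffected by scaling by $c$ because $m_{c\alpha}(x)=c\,m_\alpha(x)$ has the same gcd with $x^n-1$ as $m_\alpha(x)$; hence we may and do assume $g$ is monic. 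Set $h(x):=(x^n-1)/g(x)$, a monic polynomial of degree $n-k$. It then suffices to show $\mathrm{Ord}_q(\alpha)=h$.

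The first half is an immediate module computation. Using that $\mathbb{F}_{q^n}$ is an $\mathbb{F}_q[x]$-module under the action $\circ$, one has $h\circ\alpha=h\circ(g\circ\beta)=(hg)\circ\beta=(x^n-1)\circ\beta=0$, so $\mathrm{Ord}_q(\alpha)\mid h$. For the reverse divisibility, put $f:=\mathrm{Ord}_q(\alpha)$, so that $f\mid h$ and $f\circ\alpha=0$. Then $(fg)\circ\beta=f\circ(g\circ\beta)=f\circ\alpha=0$, and since $\mathrm{Ord}_q(\beta)=x^n-1$ this forces $(x^n-1)\mid fg$. Combined with $fg\mid hg=x^n-1$ (from $f\mid h$) and monicity, we get $fg=x^n-1$, hence $f=h$. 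Thus $\mathrm{Ord}_q(\alpha)=h$ has degree $n-k$, and $\alpha$ is $k$-normal.

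There is essentially no hard step here: the whole argument is the one-line identity $(fg)\circ\beta=f\circ(g\circ\beta)$ together with the fact that a normal element has $\mathbb{F}_q$-order exactly $x^n-1$. The only points demanding a line of care are the harmless reduction to monic $g$ and the remark that the set $\{p\in\mathbb{F}_q[x]:p\circ\alpha=0\}$ is an ideal whose monic generator divides $x^n-1$, so that ``$h\circ\alpha=0$'' legitimately yields ``$\mathrm{Ord}_q(\alpha)\mid h$''. An alternative, slightly more computational route would work directly with $m_\alpha(x)$ and the gcd appearing in the original definition of $k$-normality, but the $\mathbb{F}_q$-order formulation keeps the bookkeeping trivial.
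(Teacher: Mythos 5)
Your proof is correct: the reduction to monic $g$, the identity $h\circ\alpha=(hg)\circ\beta=0$, and the ideal argument forcing $\mathrm{Ord}_q(\alpha)=(x^n-1)/g$ of degree $n-k$ together give exactly the $\mathbb{F}_q$-order characterization of $k$-normality used in this paper. Note that the paper itself offers no proof of this lemma (it is quoted from \cite{reis19}), and your argument is essentially the standard one given there, so there is nothing to reconcile.
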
 
	\subsection{Characters}
	Let $\mathfrak{G}$ be a finite abelian group. A {\it character} $\chi$ of $\mathfrak{G}$ is a homomorphism from $\mathfrak{G}$ into the multiplicative group of  complex numbers of unit modulus. The set of all such characters of $\mathfrak{G}$, denoted by $\widehat{\mathfrak{G}}$, forms a multiplicative group and $\mathfrak{G}\cong\widehat{\mathfrak{G}}$. A character $\chi$ is called the trivial character if $\chi(\alpha)=1$ for all $\alpha\in \mathfrak{G}$, otherwise it is a non-trivial character.
	\begin{lemma}{\upshape\cite[Theorem 5.4]{Nieder}}\label{L2.2}
		Let $\chi$ be any non-trivial character of a finite abelian group $\mathfrak{G}$ and $\alpha \in \mathfrak{G}$ be any non-trivial element, then
		$$\sum_{\alpha \in \mathfrak{G}} \chi(\alpha)=0 \ \text{and} \ \sum_{\chi \in \widehat{\mathfrak{G}}} \chi(\alpha)=0.$$
	\end{lemma}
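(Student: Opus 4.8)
Since the above lemma is the classical orthogonality statement for characters of a finite abelian group, the plan is simply the standard translation trick, applied once to each of the two sums and linked by the self-duality $\mathfrak{G}\cong\widehat{\mathfrak{G}}$ recorded just before it.

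First I would treat $\sum_{\alpha\in\mathfrak{G}}\chi(\alpha)$ for a non-trivial character $\chi$. By non-triviality there is some $g_{0}\in\mathfrak{G}$ with $\chi(g_{0})\neq 1$. Since the map $\alpha\mapsto g_{0}\alpha$ permutes $\mathfrak{G}$ and $\chi$ is multiplicative,
$$\chi(g_{0})\sum_{\alpha\in\mathfrak{G}}\chi(\alpha)=\sum_{\alpha\in\mathfrak{G}}\chi(g_{0}\alpha)=\sum_{\beta\in\mathfrak{G}}\chi(\beta),$$
so $\bigl(\chi(g_{0})-1\bigr)\sum_{\alpha\in\mathfrak{G}}\chi(\alpha)=0$, and as $\chi(g_{0})\neq 1$ the sum must vanish.

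Next I would treat $\sum_{\chi\in\widehat{\mathfrak{G}}}\chi(\alpha)$ for a non-trivial element $\alpha\in\mathfrak{G}$. The ingredient needed here is that the characters of a finite abelian group separate its points; that is, there exists $\chi_{0}\in\widehat{\mathfrak{G}}$ with $\chi_{0}(\alpha)\neq 1$, which is precisely the content of $\mathfrak{G}\cong\widehat{\mathfrak{G}}$. Using that $\chi\mapsto\chi_{0}\chi$ permutes $\widehat{\mathfrak{G}}$,
$$\chi_{0}(\alpha)\sum_{\chi\in\widehat{\mathfrak{G}}}\chi(\alpha)=\sum_{\chi\in\widehat{\mathfrak{G}}}(\chi_{0}\chi)(\alpha)=\sum_{\chi\in\widehat{\mathfrak{G}}}\chi(\alpha),$$
and the same factoring-out argument forces this sum to be $0$ as well.

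The only step that is not purely formal is producing the separating character $\chi_{0}$ in the second part; this is where the structure theory of finite abelian groups is genuinely used (the primary decomposition together with the self-duality of finite cyclic groups). Because the paper has already recorded $\mathfrak{G}\cong\widehat{\mathfrak{G}}$, this fact is on hand and the proof needs nothing more.
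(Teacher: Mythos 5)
Your proof is correct and is the standard translation argument; the paper itself offers no proof of this lemma, simply citing \cite[Theorem 5.4]{Nieder}, and your argument (including the correctly flagged need for a character separating the non-trivial element $\alpha$, supplied by $\mathfrak{G}\cong\widehat{\mathfrak{G}}$) is essentially the one given in that reference.
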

	Let $\psi$ denote the additive character for the additive group $\mathbb{F}_{q^n}$, and $\chi$ denote the multiplicative character for the multiplicative group $\mathbb{F}_{q^n}^*.$
	The additive character $\psi_0$ defined by $\psi_{0}(\beta)=e^{2\pi i \mathrm{Tr}(\beta)/p}, \ \text{for all} \ \beta \in \mathbb{F}_{q^n},$
	where $p$ is the characteristic of $\mathbb{F}_{q^n}$ and $\mathrm{Tr}$ is the absolute trace function from $\mathbb{F}_{q^n}$ to    $\mathbb{F}_p$, is called the {\it canonical additive character} of $\mathbb{F}_{q^n}$. Moreover, every additive character $\psi_\beta$ for $\beta \in \mathbb{F}_{q^n}$ can be expressed in terms of the canonical additive character $\psi_0$ as $\psi_\beta(\gamma)=\psi_{0}(\beta\gamma),\ \text{for all} \ \gamma \in \mathbb{F}_{q^n}.$ 
	
	For any $\psi \in \widehat{\mathbb{F}}_{q^n}$, $\alpha \in \mathbb{F}_{q^n}$ and $g(x)\in \mathbb{F}_q[x]$, $\widehat{\mathbb{F}}_{q^n}$ is an $\mathbb{F}_{q}[x]$-module under the action $\psi\circ g(\alpha)=\psi(g\circ \alpha).$ The {\it $\mathbb{F}_q$-order} of an additive character $\psi\in \widehat{\mathbb{F}}_{q^n}$, denoted by $\mathrm{Ord}_q(\psi)$, is the least degree monic divisor $g(x)$ of $x^n-1$ such that $\psi\circ g$ is the trivial character and there are precisely $\Phi_q(g)$ characters of $\mathbb{F}_q$-order $g(x)$, where $\Phi_q(g)=|(\mathbb{F}_{q}[x]/<g>)^*|$. Moreover, $\sum_{h\mid g}\Phi_q(h)=q^{\mathrm{deg}(g)}.$
	
	We shall need the following lemma to prove our sufficient condition.
	\begin{lemma}\label{L2.3}{\upshape\cite[Theorem 5.5]{Fu}}
		Let $F(x)=\prod_{j=1}^{k}f_j(x)^{n_j}$ be a rational function in $\mathbb{F}_{q^n}(x)$, where $f_j(x)\in \mathbb{F}_{q^n}[x]$ are irreducible polynomials and $n_j$ are non-zero integers. Let $\chi$ be a multiplicative character of order $d$ of $\mathbb{F}_{q^n}^*.$ Suppose that the rational function $F(x)$ is not of the form $L(x)^{d}$ for any $L(x)\in \mathbb{F}(x),$ where $\mathbb{F}$ is the algebraic closure of $\mathbb{F}_{q^n}.$ Then we have  
		$$\Big |\sum_{\substack{\alpha \in \mathbb{F}_{q^n}\\F(\alpha)\neq 0,\infty}}\chi(F(\alpha))\Big|\leq \Big(\sum_{j=1}^{k}\mathrm{deg}(f_j)-1\Big)q^{n/2}.$$
	\end{lemma}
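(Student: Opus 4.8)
The plan is to prove this as the classical Weil bound for multiplicative character sums of a rational function, realising the sum geometrically through a Kummer cover of the projective line and feeding the Riemann Hypothesis for curves (Weil's theorem) into the resulting factorisation of the zeta function. Throughout, write $Q=q^n$ for the cardinality of the ground field and extend $\chi$ to all of $\mathbb{F}_{q^n}$ by $\chi(0)=0$. Since $d\mid Q-1$, the characters of $\mathbb{F}_{q^n}^{*}$ of order dividing $d$ are exactly $\chi^{0},\chi^{1},\ldots,\chi^{d-1}$, and for any nonzero $c$ the number of $d$-th roots of $c$ is $\#\{y:y^{d}=c\}=\sum_{j=0}^{d-1}\chi^{j}(c)$; this elementary identity is what converts the given character sum into point counts.

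First I would introduce the affine curve $\mathcal{C}\colon y^{d}=F(x)$ over $\mathbb{F}_{q^n}$ together with the degree-$d$ covering $\pi\colon\mathcal{C}\to\mathbb{P}^{1}$, $(x,y)\mapsto x$, whose deck group is the cyclic group $\mu_{d}$ acting by $y\mapsto\zeta y$. Counting the affine points lying over the locus $F(\alpha)\neq 0,\infty$ gives
$$\#\{(\alpha,\beta)\in\mathbb{F}_{q^n}^{2}:\beta^{d}=F(\alpha)\neq 0\}=\sum_{\substack{\alpha\in\mathbb{F}_{q^n}\\ F(\alpha)\neq 0,\infty}}\ \sum_{j=0}^{d-1}\chi^{j}(F(\alpha))=\sum_{j=0}^{d-1}S_{j},$$
where $S_{j}:=\sum_{\alpha:\,F(\alpha)\neq 0,\infty}\chi^{j}(F(\alpha))$, so the quantity to be bounded is the single term $S=S_{1}$. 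To isolate it, I would pass to the zeta function of a smooth projective model of $\mathcal{C}$ and use its factorisation along the characters of $\mu_{d}$,
$$Z(\mathcal{C},T)=\prod_{j=0}^{d-1}L_{j}(T),$$
where $L_{0}(T)=\big((1-T)(1-QT)\big)^{-1}$ is the zeta function of $\mathbb{P}^{1}$ and, for $j\neq 0$, $L_{j}(T)$ is the Artin--Dirichlet $L$-function attached to the character $\chi^{j}\circ F$ of the function field $\mathbb{F}_{q^n}(x)$.

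The decisive input is then the Riemann Hypothesis for curves (Weil's theorem): whenever $\chi^{j}\circ F$ is a nontrivial character of $\mathbb{F}_{q^n}(x)$, the factor $L_{j}(T)=\prod_{i}(1-\omega_{j,i}T)$ is a polynomial all of whose inverse roots satisfy $|\omega_{j,i}|=\sqrt{Q}=q^{n/2}$. Matching the coefficient of $T$ in $\log L_{j}$ against the Euler product identifies $S_{j}$, up to the contribution of the place at infinity discussed below, with $-\sum_{i}\omega_{j,i}$, so that $|S_{j}|\le(\deg L_{j})\,q^{n/2}$. The point is that for the index $j=1$ the character $\chi\circ F$ is trivial on the divisor group if and only if $F=c\,L(x)^{d}$ for some constant $c$ and some $L\in\overline{\mathbb{F}_{q^n}}(x)$; over the algebraic closure $c$ is itself a $d$-th power, so this is the same as $F=L(x)^{d}$. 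Thus the stated hypothesis ``$F\neq L^{d}$'' is exactly the assertion that $\chi\circ F$ is nontrivial, and hence that $L_{1}(T)$ is a genuine polynomial with all inverse roots of modulus $\sqrt{Q}$. Other factors $L_{j}$ may well be trivial when $F$ happens to be a lower power, but only the single factor $L_{1}$ enters our estimate, so the weak-looking hypothesis is precisely the right one.

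What remains, and what I expect to be the main bookkeeping obstacle, is the degree estimate $\deg L_{1}\le\big(\sum_{j=1}^{k}\deg(f_{j})\big)-1$ together with the clean handling of the place at infinity. I would obtain the degree from a conductor computation, equivalently Riemann--Hurwitz for $\pi$: the Kummer character $\chi\circ F$ is ramified exactly at the closed points $P$ where $\operatorname{ord}_{P}(F)$ is not divisible by $d$, which occur only among the zeros and poles of $F$, and all such ramification is tame because $d\mid Q-1$ forces $\gcd(d,p)=1$. The finite ramified places are supported on the $k$ irreducible factors $f_{j}$, of total geometric degree $\sum_{j}\deg(f_{j})$, with one further possible ramified place at infinity. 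The Euler--Poincar\'e (Grothendieck--Ogg--Shafarevich) formula on $\mathbb{P}^{1}$ gives $\deg L_{1}=\deg\mathfrak{f}-2$ for the conductor $\mathfrak{f}$, and tameness makes each ramified place contribute exponent one. If $\infty$ is a zero or pole of $F$ it lies in $\mathfrak{f}$ and one gets $\deg L_{1}\le\sum_{j}\deg(f_{j})+1-2=\sum_{j}\deg(f_{j})-1$, while $\infty$ is excluded from the summation range anyway; if $\infty$ is a good place then $\deg L_{1}\le\sum_{j}\deg(f_{j})-2$, but now $S_{1}$ differs from $-\sum_{i}\omega_{1,i}$ only by the single omitted term $\chi(F(\infty))$, of modulus $1\le q^{n/2}$. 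Either way $|S|=|S_{1}|\le\big(\sum_{j=1}^{k}\deg(f_{j})-1\big)q^{n/2}$, which is the claim. I expect essentially all the genuine care to lie in this ramification-and-infinity accounting; the analytic heart, the estimate $|\omega|=\sqrt{Q}$, is supplied verbatim by Weil's theorem and is the one step one cannot avoid invoking as a black box.
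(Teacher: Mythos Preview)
The paper does not prove this lemma at all: it is quoted verbatim from an external reference (\cite[Theorem 5.5]{Fu}) and used as a black box in the proof of Theorem~\ref{T4.1}. So there is no ``paper's own proof'' to compare your proposal against.

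That said, your outline is the standard route to this classical Weil bound: interpret the sum via the Kummer cover $y^{d}=F(x)$ of $\mathbb{P}^{1}$, factor the zeta function along the $\mu_{d}$-action, identify the relevant piece with the Artin $L$-function of $\chi\circ F$, invoke the Riemann Hypothesis for curves to get $|\omega_{1,i}|=q^{n/2}$, and bound $\deg L_{1}$ by a tame conductor/Riemann--Hurwitz count at the places supporting $\operatorname{div}(F)$ together with $\infty$. Your handling of the hypothesis ``$F\neq L^{d}$'' as precisely the nontriviality of $\chi\circ F$, and your case split at $\infty$ to absorb the $-1$ in the bound, are both the right moves. This is essentially the argument one finds in the cited source, so your proposal is a faithful reconstruction rather than a different approach.
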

	\begin{lemma}\label{L2.4}{\upshape\cite[Theorem 5.6]{Fu}}
		Let $F(x),\ G(x) \in \mathbb{F}_{q^n}(x)$ be rational functions. Write $F(x)=\prod_{j=1}^{k}f_j(x)^{n_j}$, where $f_j(x)\in \mathbb{F}_{q^n}[x]$ are irreducible polynomials and $n_j$ are non-zero integers. Let $d_1=\sum_{j=1}^{k}\mathrm{deg}(f_j(x))$, $d_2=max\{\mathrm{deg}(G(x)),0\}$, $d_3$ be the degree of the denominator of $G(x)$ and $d_4$ be the sum of degrees of those irreducible polynomials dividing the denominator of $G(x)$, but distinct from $f_j \ ;\ 1\leq j\leq k.$ Let $\chi$ be the multiplicative character of $\mathbb{F}_{q^n}^*$ and $\psi$ be the non-trivial additive character of $\mathbb{F}_{q^n}.$ Suppose that $G(x)\neq L(x)^{q^n}-L(x)$ in $\mathbb{F}(x),$ where $\mathbb{F}$ is the algebraic closure of $\mathbb{F}_{q^n}.$ Then we have 
		$$\Big |\sum_{\substack{\alpha \in \mathbb{F}_{q^n},F(\alpha)\neq 0,\infty \\ G(\alpha)\neq \infty}} \chi(F(\alpha))\psi(G(\alpha))\Big|\leq (d_1+d_2+d_3+d_4-1)q^{n/2}.$$ 
	\end{lemma}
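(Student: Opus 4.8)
\section*{Proof proposal}

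The plan is to recognize the sum as a \emph{mixed} (hybrid) character sum and to bound it through the $\ell$-adic cohomology of a rank-one sheaf on the projective line, invoking Deligne's purity together with the Grothendieck--Ogg--Shafarevich Euler-characteristic formula; the coefficient $d_1+d_2+d_3+d_4-1$ will emerge exactly as the dimension of the single surviving cohomology group. An entirely classical route via Weil's theorem on the zeta function of the associated Artin--Schreier--Kummer cover of $\mathbb{P}^1$ is also available, the constant then coming from the genus of that cover; I sketch the cohomological version since it makes the origin of each $d_i$ most transparent.

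First I would fix a prime $\ell\neq p$, set $d=\mathrm{ord}(\chi)$, and form the two standard rank-one lisse sheaves on $\mathbb{A}^1_{\mathbb{F}_{q^n}}$: the Kummer sheaf $\mathcal{L}_\chi$ attached to $\chi$ (tame, ramified only at $0$ and $\infty$) and the Artin--Schreier sheaf $\mathcal{L}_\psi$ attached to $\psi$, which is defined by the Lang torsor $t\mapsto t^{q^n}-t$ and is wildly ramified at $\infty$ with Swan conductor $1$. Pulling back along the rational maps $F$ and $G$ and tensoring, I obtain a rank-one lisse sheaf $\mathcal{L}=F^{*}\mathcal{L}_\chi\otimes G^{*}\mathcal{L}_\psi$ on the open curve $U=\mathbb{P}^1\setminus S$, where $S$ is the finite set consisting of the zeros and poles of $F$, the poles of $G$, and, when needed, the point at infinity. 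Each summand $\chi(F(\alpha))\psi(G(\alpha))$ is the trace of geometric Frobenius on the stalk of $\mathcal{L}$ at $\alpha$, so the Grothendieck--Lefschetz trace formula gives $\sum_{\alpha\in U(\mathbb{F}_{q^n})}\chi(F(\alpha))\psi(G(\alpha))=-\mathrm{Tr}(\mathrm{Frob}\mid H^1_c(\bar U,\mathcal{L}))$ once the other cohomology groups are shown to vanish.

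The next step is this vanishing. Since $U$ is affine and $\mathcal{L}$ is lisse, $H^0_c(\bar U,\mathcal{L})=0$ automatically. For $H^2_c$, which is the group of geometric coinvariants (Tate twisted), I must show $\mathcal{L}$ is geometrically nontrivial; this is exactly where the hypothesis enters. Because the Lang torsor defining $\mathcal{L}_\psi$ is $t\mapsto t^{q^n}-t$, the pullback $G^{*}\mathcal{L}_\psi$ is geometrically trivial precisely when $G=L^{q^n}-L$ for some $L\in\mathbb{F}(x)$, which the hypothesis forbids; in particular $G$ is nonconstant and acquires a genuine pole, so $G^{*}\mathcal{L}_\psi$ has strictly positive Swan conductor at some point. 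As $F^{*}\mathcal{L}_\chi$ is tame everywhere, the tensor $\mathcal{L}$ inherits that positive Swan conductor and cannot be geometrically trivial, forcing $H^2_c(\bar U,\mathcal{L})=0$. The sheaf $\mathcal{L}$ is pointwise pure of weight $0$ (its stalk traces are products of roots of unity), so by Deligne's purity the Frobenius eigenvalues on $H^1_c$ have absolute value at most $q^{n/2}$, whence $\bigl|\sum_\alpha \chi(F(\alpha))\psi(G(\alpha))\bigr|\le \dim_{\overline{\mathbb{Q}}_\ell}H^1_c(\bar U,\mathcal{L})\cdot q^{n/2}$.

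It remains to compute $\dim H^1_c=-\chi_c(\bar U,\mathcal{L})$. The Grothendieck--Ogg--Shafarevich formula for a rank-one sheaf on $\mathbb{P}^1$ gives $\chi_c(\bar U,\mathcal{L})=(2-|S|)-\sum_{x\in S}\mathrm{Sw}_x(\mathcal{L})$, so $\dim H^1_c=|S|+\sum_{x\in S}\mathrm{Sw}_x(\mathcal{L})-2$. Here the number of geometric punctures from the zeros and poles of $F$ is $d_1=\sum_j\deg f_j$, the extra punctures from poles of $G$ not already among them contribute $d_4$, and the point at infinity contributes the remaining $1$, giving $|S|\le d_1+d_4+1$; meanwhile, as $\mathcal{L}_\chi$ is tame, all Swan conductors of $\mathcal{L}$ are those of $G^{*}\mathcal{L}_\psi$, which at each finite pole is at most the pole order (summing to at most $d_3$) and at infinity is at most $\deg G$, that is $d_2$, so $\sum_x \mathrm{Sw}_x \le d_2+d_3$. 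Combining, $\dim H^1_c\le (d_1+d_4+1)+(d_2+d_3)-2=d_1+d_2+d_3+d_4-1$, which yields the claimed bound. I expect the delicate point to be precisely this last bookkeeping: establishing $\mathrm{Sw}_x\le(\text{pole order})$ at wildly ramified points even when $p$ divides a pole order (where the inequality can only help), and matching the separate r\^oles of $d_3$ and $d_4$ so that poles of $G$ coinciding with zeros or poles of $F$ are not double counted between the puncture term and the Swan term; pinning the additive constant to exactly $-1$ rather than an off-by-one is the crux.
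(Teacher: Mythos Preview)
The paper does not prove this lemma at all: it is quoted verbatim as \cite[Theorem 5.6]{Fu} and used as a black box in the proof of Theorem~\ref{T4.1}. So there is no ``paper's own proof'' to compare against; your task here was only to supply what the authors import from elsewhere.

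That said, your sketch is the standard $\ell$-adic proof of this Weil-type hybrid bound and is essentially correct. The identification of the sum with $-\mathrm{Tr}(\mathrm{Frob}\mid H^1_c)$, the vanishing of $H^0_c$ (affine, lisse) and of $H^2_c$ (geometric nontriviality forced by the hypothesis $G\neq L^{q^n}-L$, since the Kummer piece is tame while the Artin--Schreier piece then carries a genuine wild break), and the appeal to Deligne's purity are all as they should be. Your Grothendieck--Ogg--Shafarevich bookkeeping also lands on the right constant: the geometric punctures split as $d_1$ from the $f_j$, at most $d_4$ new ones from the finite poles of $G$, and the point at infinity; the Swan contributions are bounded by pole orders, giving $\le d_3$ at finite points and $\le d_2$ at infinity, whence $\dim H^1_c\le (d_1+d_4+1)+(d_2+d_3)-2$. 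The only places to be careful in a full write-up are exactly the ones you flag: that $\mathrm{Sw}_x(G^*\mathcal{L}_\psi)\le \mathrm{ord}_x^{-}(G)$ even when $p$ divides the pole order (one reduces $G$ modulo $L^{q^n}-L$ locally), and that the definition of $d_4$ already excludes the $f_j$, so no pole of $G$ is counted twice between $|S|$ and the Swan term.
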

	\subsection{Some characteristic functions} 
	For the set of $e$-free elements of $\mathbb{F}_{q^n}^*$, we have the following characteristic function $\rho_e:\mathbb{F}_{q^n}^*\to \{0,1\}.$
	\begin{equation}\label{Eq1}
	\rho_{e}(\beta)= \frac{\phi(e)}{e}\sum_{d|e}\frac{\mu(d)}{\phi(d)}\sum_{\chi_{d}}\chi_{d}(\beta),
	\end{equation}
	where $\mu$ is the M\"obius function and the internal sum runs over all the multiplicative characters $\chi_d$ of order $d$.  Similar to the characteristic function for the set of $e$-free elements, we have the following characteristic function $\Upsilon_g:\mathbb{F}_{q^n}\to \{0,1\}$ for the set of $g$-free elements of $\mathbb{F}_{q^n}$.
	\begin{equation}\label{Eq2}
	\Upsilon_{g}(\alpha)= \dfrac{\Phi_q(g)}{q^{\mathrm{deg}(g)}}\sum_{h|g}\dfrac{\mu'(h)}{\Phi_{q}(h)}\sum_{\psi_{h}}\psi_{h}(\alpha),
	\end{equation}
	where $\mu'$ is the analog of the M\"obius function, which is defined as $\mu'(h)=(-1)^t$, if $h(x)$ is a product of $t$ distinct monic irreducible polynomials, otherwise 0, and the internal sum runs over the additive characters $\psi_{h}$ of the $\mathbb{F}_{q}$-order $h(x)$.
	
	\section{Characterization of $r$-primitive and $k$-normal elements}\label{Sec3}
	 
	\subsection{Characteristic function for the set of $r$-primitive elements}
	Let $r\mid q^n-1$ be a positive integer, and write $r=up_1^{b_1}p_2^{b_2}\cdots p_s^{b_s}$, where $u$ and $(q^n-1)/u$ are co-prime and $p_j$'s are distinct primes such that $b_j\geq1$ and $p_j^{b_j+1}\mid q^n-1$, for all $j=1,2,\ldots,s.$ In this article, we denote the product of distinct irreducible factors of an integer or a polynomial $m$ by $\mathrm{rad}(m)$. Now, let $R=\mathrm{rad}(q^n-1)/\mathrm{rad}(r)$, and set $\delta_j:=p_j^{b_j}$ and $\lambda_j:=p_j^{b_j+1}$ for all $j=1,2,\ldots,s$. In \cite{cohenrprim}, Cohen and Kapetanakis showed that an element $\alpha \in \mathbb{F}_{q^n}$ is $r$-primitive if and only if $\alpha$ is $R$-free, $\alpha=\beta^r$ for some $\beta \in \mathbb{F}_{q^n}$, but $\alpha\neq\beta^{\lambda_j}$ for any $\beta \in \mathbb{F}_{q^n}$ and any $j=1,2,\ldots,s$, and using this fact, they defined the characteristic function for $r$-primitive elements. In \cite{Rani2}, following them, we defined the following characteristic function $\Gamma_r^d$ for the set $Q_r^d$ of elements $\alpha\in \mathbb{F}_{q^n}$ such that $\alpha$ is $d$-free for some divisor $d$ of $R$, $\alpha=\beta^r$ for some $\beta \in \mathbb{F}_{q^n}$, but $\alpha\neq\beta^{\lambda_j}$ for any $\beta \in \mathbb{F}_{q^n}$ and any $j=1,2,\ldots,s.$
	\begin{equation}\label{Eq3}
	\Gamma_r^d(\alpha)=\frac{\phi(d)}{rd}\sum_{\substack{d_1\mid d,d_2\mid u}}\sum_{\substack{e_j\mid \lambda_j,\\1\leq j\leq s}}\frac{\mu(d_1)}{\phi(d_1)}\big(\prod_{j=1}^s\ell_{p_j,e_j}\big)\sum_{\substack{\chi_{d_1},\chi_{d_2},\\\chi_{e_1},\ldots,\chi_{e_s}}}(\chi_{d_1}\chi_{d_2}\chi_{e_1}\cdots\chi_{e_s})(\alpha),
	\end{equation}  
	where 
	$\ell_{p_j,e_j}=\left\{\begin{array}{ll}
	1-1/p_j&\mathrm{if}\ e_j\neq \lambda_j,\\
	-1/p_j&\mathrm{if}\ e_j=\lambda_j
	\end{array}\right..$
	In particular, for $d=R$, the above characteristic function provides the characteristic function given by Cohen and Kapetanakis \cite{cohenrprim} for the set $Q_r^R$ of $r$-primitive elements.
	\subsection{Characteristic function for the set of $k$-normal elements}
	For the existence of those elements in a finite field $\mathbb{F}_{q^n}$ that simultaneously satisfy certain properties, such as primitivity, normality, trace, norm, etc., a general approach is to use the characteristic functions for the set of elements with those properties. In this article, we also use this approach to discuss the existence of a pair $(\alpha,\alpha^{-1})$ of $r$-primitive $k$-normal elements in $\mathbb{F}_{q^n}$ over $\mathbb{F}_q$. For this, we shall need the characteristic function for the set of $k$-normal elements that we define in this subsection.
	
	Let $S_k$ be the collection of all $k$-normal elements in $\mathbb{F}_{q^n}$ over $\mathbb{F}_q$ and $P_k$ be the collection of polynomials of degree $k$ that divides $x^n-1$.
	By Lemma \ref{L2.1}, if $\beta$ is a normal element of $\mathbb{F}_{q^n}$ over $\mathbb{F}_q$, then for any polynomial $g\in P_k$, $g\circ\beta$ is $k$-normal over $\mathbb{F}_q$. So, for every $g\in P_k$, we consider a set $S_{g,k}:=\{\alpha \in S_k\ |\ \alpha=g\circ\beta \text{ for some element}\ \beta\in\mathbb{F}_{q^n}\}$ of $k$-normal elements. We have the following lemma.
	\begin{lemma}\label{L3.1}
		If $g,\ g'$ are two distinct polynomials in $P_k$, then  $S_{g,k}\bigcap S_{g',k}=\emptyset$, and $\bigcup_{g\in P_k}S_{g,k}=S_k$.
	\end{lemma}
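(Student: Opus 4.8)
The plan is to reduce everything to the characterization of $k$-normality via the $\mathbb{F}_q$-order, combined with the $\mathbb{F}_q[x]$-module structure $h\circ\beta$ on $\mathbb{F}_{q^n}$. The key observation I would isolate first is that membership of $\alpha$ in $S_{g,k}$ determines $g$ uniquely: if $\alpha=g\circ\beta$ with $g\in P_k$ and $\alpha$ is $k$-normal, then necessarily $g=(x^n-1)/\mathrm{Ord}_q(\alpha)$. Granting this, the disjointness $S_{g,k}\cap S_{g',k}=\emptyset$ for distinct $g,g'$ is immediate, and all that remains is to show that every $k$-normal element really does lie in one of the $S_{g,k}$.

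To prove uniqueness of $g$, I would set $h(x)=(x^n-1)/g(x)$, which is monic of degree $n-k$ since $g\in P_k$, and observe that $h\circ\alpha=h\circ(g\circ\beta)=(hg)\circ\beta=(x^n-1)\circ\beta=0$, so $\mathrm{Ord}_q(\alpha)\mid h$. Since $\alpha$ is $k$-normal, $\deg\mathrm{Ord}_q(\alpha)=n-k=\deg h$, and comparing monic polynomials of equal degree forces $\mathrm{Ord}_q(\alpha)=h=(x^n-1)/g$. Thus $g$ is recovered from $\alpha$ alone, so $\alpha$ can lie in at most one $S_{g,k}$; the inclusion $\bigcup_{g\in P_k}S_{g,k}\subseteq S_k$ holds trivially by definition of $S_{g,k}$.

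For the reverse inclusion $S_k\subseteq\bigcup_{g\in P_k}S_{g,k}$ --- in effect a converse to Lemma~\ref{L2.1} --- I would take a $k$-normal $\alpha$, set $d=\mathrm{Ord}_q(\alpha)$ (of degree $n-k$) and $g=(x^n-1)/d\in P_k$, and produce $\beta$ with $g\circ\beta=\alpha$. Fixing a normal element $\gamma$ (normal basis theorem), write $\alpha=f\circ\gamma$ for the unique $f\in\mathbb{F}_q[x]$ of degree $<n$. Using $\mathrm{Ord}_q(\gamma)=x^n-1$, a short computation gives $\mathrm{Ord}_q(\alpha)=(x^n-1)/\gcd(f,x^n-1)$, hence $\gcd(f,x^n-1)=g$ and in particular $g\mid f$. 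Writing $f=gf_1$ and putting $\beta:=f_1\circ\gamma$ gives $\alpha=(gf_1)\circ\gamma=g\circ(f_1\circ\gamma)=g\circ\beta$, so $\alpha\in S_{g,k}$. A more structural variant: the image of $\beta\mapsto g\circ\beta$ is contained in the kernel of $\delta\mapsto d\circ\delta$ (since $dg=x^n-1$), and both subspaces have $\mathbb{F}_q$-dimension $n-k$ --- the kernel of $m\circ(-)$ has dimension $\deg m$, because exactly $\Phi_q(m')$ elements have $\mathbb{F}_q$-order $m'$ and $\sum_{m'\mid m}\Phi_q(m')=q^{\deg m}$ --- so they coincide, and $d\circ\alpha=0$ places $\alpha$ in that image.

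I do not expect a genuine obstacle: the whole argument is formal once one works inside the $\mathbb{F}_q[x]$-module $(\mathbb{F}_{q^n},\circ)$. The two points needing a little care are the tightness of the degree comparison in the uniqueness step --- which genuinely uses that a $k$-normal element has $\mathbb{F}_q$-order of degree \emph{exactly} $n-k$, not merely a divisor of $x^n-1$ --- and the (implicit) convention that the polynomials in $P_k$ are monic, since otherwise $S_{g,k}=S_{cg,k}$ for a scalar $c\neq1$ and disjointness would need to be stated up to scalars. One can additionally arrange $\beta$ to be normal by adjusting $f_1$ modulo $d$ via the Chinese Remainder Theorem so that it is coprime to $x^n-1$, but this refinement is not needed for the statement.
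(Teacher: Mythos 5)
Your proposal is correct and takes essentially the same route as the paper: disjointness follows because $g$ is forced to equal $(x^n-1)/\mathrm{Ord}_q(\alpha)$, and the covering of $S_k$ is obtained by writing $\alpha=f\circ\gamma$ for a normal element $\gamma$ and factoring out $g=(x^n-1)/\mathrm{Ord}_q(\alpha)$, exactly as in the paper's argument. Your version merely spells out the degree comparison that the paper compresses into ``by uniqueness of the $\mathbb{F}_q$-order,'' so no further comparison is needed.
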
 
	\begin{proof}
		Suppose that, $S_{g,k}\bigcap S_{g',k}\neq\emptyset$, then there exist elements $\beta$ and $\beta'$ in $\mathbb{F}_{q^n}$ such that $g\circ\beta=g'\circ\beta'$, and by uniqueness of the $\mathbb{F}_q$-order of an element, we get $g=g'$, a contradiction. Hence, $S_{g,k}\bigcap S_{g',k}=\emptyset$.
		
		Clearly, $\bigcup_{g\in P_k}S_{g,k}\subseteq S_k.$ Now, let $\alpha\in S_k$, then $\alpha=h\circ\beta$, for some normal element $\beta$ and a polynomial $h\in\mathbb{F}_q[x]$ of degree at most $n-1$. Let $f$ be the $\mathbb{F}_q$-order of $\alpha$, then $f\circ\alpha=fh\circ\beta=0$, and hence, $x^n-1\mid fh$, i.e. $h=(x^n-1)h'/f$, for some $h'\in\mathbb{F}_q[x].$ Thus, $\alpha=\frac{x^n-1}{f}\circ(h'\circ\beta)$, which implies $\alpha=g\circ\gamma\in S_{g,k}$, where $g=\frac{x^n-1}{f}\in P_k$ and $\gamma=h'\circ\beta\in \mathbb{F}_{q^n}$. Hence, $\bigcup_{g\in P_k}S_{g,k}=S_k$.   
	\end{proof}

	From the above lemma, it is clear that, the set $\{S_{g,k}\ | \ g\in P_k\}$ provides a partition of $S_k$. Therefore, the characteristic function for the set $S_k$ of $k$-normal elements is equal to the sum of the characteristic functions for the sets $S_{g,k}\hspace{.5mm};\ g\in P_k.$ Hence, it is enough to define the characteristic function for the set $S_{g,k}$.

		Let $g\in P_k$ be an arbitrary but fixed polynomial. Write $g=\pi f_1^{b_1}f_2^{b_2}\cdots f_t^{b_t}$, where $\pi$ and $(x^n-1)/\pi$ are co-prime, and $f_i$'s are distinct irreducible polynomials such that $b_i\geq1$ and $f_i^{b_i+1}\mid x^n-1$, for all $i=1,2,\ldots,t.$ Further, let $G=\mathrm{rad}(x^n-1)/\mathrm{rad}(g)$, and set $\Delta_i:=f_i^{b_i}$ and $\Lambda_i:=f_i^{b_i+1}$ for all $i=1,2,\ldots,t.$ Now, we prove the following lemma. 
	\begin{lemma}\label{L3.2}
		Let $k$ be a non-negative integer and $g\in P_k$. Then, the following are equivalent.
		\begin{enumerate}[$(i)$]
			\item $\alpha\in S_{g,k}$.
			\item $\alpha$ is $G$-free, $\alpha=g\circ\beta$ for some $\beta\in \mathbb{F}_{q^n}$, but $\alpha\neq\Lambda_i\circ\beta$ for any $\beta\in\mathbb{F}_{q^n}$ and for any $i=1,2,\ldots,t.$  
		\end{enumerate}
	\end{lemma}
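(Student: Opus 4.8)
The strategy mirrors the treatment of the $r$-primitive characteristic function in Section~3.1: I would reduce each clause of~(ii) to a divisibility condition on the $\mathbb{F}_q$-order of $\alpha$, and then compare exponents prime-by-prime. Throughout, for an irreducible $\ell\in\mathbb{F}_q[x]$ and a nonzero $p\in\mathbb{F}_q[x]$ let $v_\ell(p)$ denote the exponent of $\ell$ in $p$, and set $e_\ell=v_\ell(x^n-1)$.

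First I would establish the following elementary fact: for any monic $h\mid x^n-1$ and any $\alpha\in\mathbb{F}_{q^n}$,
\[
\alpha=h\circ\beta\ \text{for some }\beta\in\mathbb{F}_{q^n}\iff\mathrm{Ord}_q(\alpha)\mid\frac{x^n-1}{h}.
\]
This follows because $\mathbb{F}_{q^n}$ is a free cyclic $\mathbb{F}_q[x]/(x^n-1)$-module: fixing a normal element $\beta_0$, every $\beta$ equals $p(x)\circ\beta_0$ for a unique $p$ with $\deg p<n$, the map $p\mapsto p\circ\beta_0$ is an $\mathbb{F}_q[x]$-module isomorphism, $\mathrm{Ord}_q(p\circ\beta_0)=(x^n-1)/\gcd(p,x^n-1)$, and (since $h\mid x^n-1$) the class of $a$ is a multiple of $h$ modulo $x^n-1$ iff $h\mid\gcd(a,x^n-1)$ iff $(x^n-1)/\gcd(a,x^n-1)\mid(x^n-1)/h$. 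Combining this (with $h=g$) with $\deg\big((x^n-1)/g\big)=n-k$, I get that $\alpha\in S_{g,k}$ if and only if $\mathrm{Ord}_q(\alpha)=(x^n-1)/g$ exactly; this is the same identification implicitly used in Lemma~\ref{L3.1}. So~(i) is equivalent to this single equation.

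Next, writing $f=\mathrm{Ord}_q(\alpha)$, I would rewrite the three clauses of~(ii): by the definition of $h$-freeness, ``$\alpha$ is $G$-free'' $\iff\gcd\big(G,(x^n-1)/f\big)=1$; by the fact above, ``$\alpha=g\circ\beta$ for some $\beta$'' $\iff f\mid(x^n-1)/g$; and, using $\gcd(\Lambda_i,x^n-1)=\Lambda_i$ (because $f_i^{b_i+1}\mid x^n-1$), ``$\alpha\neq\Lambda_i\circ\beta$ for any $\beta$'' $\iff f\nmid(x^n-1)/\Lambda_i$. It then remains to show these three conditions on $f$ hold simultaneously iff $f=(x^n-1)/g$. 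The forward direction is immediate: substituting $f=(x^n-1)/g$ one has $\gcd(G,g)=1$ by the construction of $G$, $f\mid(x^n-1)/g$ trivially, and $f\nmid(x^n-1)/\Lambda_i$ because $f_i^{b_i+1}\nmid g$ by the construction of the factorization $g=\pi f_1^{b_1}\cdots f_t^{b_t}$. For the converse I would determine $v_\ell(f)$ for each irreducible $\ell\mid x^n-1$ by cases: if $\ell=f_i$, then $f\mid(x^n-1)/g$ gives $v_{f_i}(f)\le e_{f_i}-b_i$ while $f\nmid(x^n-1)/\Lambda_i$ gives $v_{f_i}(f)\ge e_{f_i}-b_i$, so $v_{f_i}(f)=e_{f_i}-b_i=v_{f_i}\big((x^n-1)/g\big)$; if $\ell\mid\pi$, then $v_\ell(g)=e_\ell$ (from $\gcd(\pi,(x^n-1)/\pi)=1$), so $f\mid(x^n-1)/g$ forces $v_\ell(f)=0=v_\ell\big((x^n-1)/g\big)$; and if $\ell$ divides $x^n-1$ but not $g$, then $v_\ell(G)=1$, so $\gcd\big(G,(x^n-1)/f\big)=1$ forces $v_\ell(f)=e_\ell=v_\ell\big((x^n-1)/g\big)$. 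Hence $v_\ell(f)=v_\ell\big((x^n-1)/g\big)$ for all $\ell$, i.e.\ $f=(x^n-1)/g$, completing the equivalence.

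The main obstacle I anticipate is the elementary fact of the second paragraph — making the identification of $\{h\circ\beta:\beta\in\mathbb{F}_{q^n}\}$ with $\{\alpha:\mathrm{Ord}_q(\alpha)\mid(x^n-1)/h\}$ fully rigorous via the cyclic module structure; once that is in hand the rest is exponent bookkeeping directly parallel to the $r$-primitive case, the only slightly fiddly point being the separate handling of the factor $\pi$.
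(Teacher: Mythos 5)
Your proposal is correct. It rests on the same underlying fact as the paper's proof --- the cyclic $\mathbb{F}_q[x]$-module structure of $\mathbb{F}_{q^n}$ given by a normal element --- but it is organized differently. You isolate as a standalone lemma the equivalence $\alpha=h\circ\beta$ for some $\beta$ $\iff$ $\mathrm{Ord}_q(\alpha)\mid (x^n-1)/h$ (for $h\mid x^n-1$), reduce both (i) and every clause of (ii) to divisibility conditions on $f=\mathrm{Ord}_q(\alpha)$, and finish by comparing valuations irreducible-by-irreducible, in deliberate parallel with the multiplicative $r$-primitive case. The paper instead argues directly: for (i)$\Rightarrow$(ii) it uses $\mathrm{Ord}_q(\alpha)=(x^n-1)/g$ together with $\gcd(G,g)=1$ and a contradiction from $\Lambda_i\mid g$; for (ii)$\Rightarrow$(i) it writes $\alpha=g\circ(h\circ\gamma)$ with $\gamma$ normal, uses the formula $\mathrm{Ord}_q(\alpha)=\dfrac{x^n-1}{g\,\gcd(h,(x^n-1)/g)}$, and rules out any irreducible factor $h'$ of $\gcd(h,(x^n-1)/g)$ because either $h'\mid G$ (contradicting $G$-freeness) or $\Lambda_i\mid gh$ (contradicting the third clause). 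Your route buys a cleaner, representation-free converse and a uniform template shared with the characteristic function for $r$-primitive elements, at the cost of first justifying the module isomorphism and the order formula $\mathrm{Ord}_q(p\circ\beta_0)=(x^n-1)/\gcd(p,x^n-1)$; the paper's route is shorter once that order formula is taken for granted, but it hides the divisibility equivalence that you make explicit. The only points you should spell out in a final write-up are the small step that $f\nmid (x^n-1)/\Lambda_i$ can only fail at the factor $f_i$ (which uses $f\mid x^n-1$), and the verification that your three cases exhaust the irreducible divisors of $x^n-1$; both are immediate.
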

	\begin{proof}
		First, let $\alpha\in S_{g,k}$. Then by definition of $S_{g,k}$, $\alpha=g\circ\beta$ for some $\beta\in \mathbb{F}_{q^n}$, and $\mathbb{F}_q$-order of $\alpha$ is $(x^n-1)/g$. Notice that, $G$ and $g$ are co-prime, therefore, $\mathrm{gcd}\big(G,\frac{x^n-1}{\mathrm{Ord}_q(\alpha)}\big)=1$, which implies $\alpha$ is $G$-free. Now, suppose that $\alpha=\Lambda_i\circ\beta$ for some $\beta\in \mathbb{F}_{q^n}$ and some $i=1,2,\ldots,t$. Then, $\frac{x^n-1}{\Lambda_i}\circ\alpha=0$, which implies $\frac{x^n-1}{g}\mid\frac{x^n-1}{\Lambda_i}$, i.e. $\Lambda_i\mid g,$ which is not possible. Hence, $(i)$ implies $(ii)$.
		
		Now, let $\alpha$ is $G$-free, $\alpha=g\circ\beta$ for some $\beta\in \mathbb{F}_{q^n}$, but $\alpha\neq\Lambda_i\circ\beta$ for any $\beta\in\mathbb{F}_{q^n}$ and for any $i=1,2,\ldots,t.$ Then, $\alpha=g\circ(h\circ\gamma)$ for some normal element $\gamma\in\mathbb{F}_{q^n}$ and a polynomial $h\in \mathbb{F}_q[x]$ of degree at most $n-1$, and $\mathrm{Ord}_q(\alpha)=\frac{x^n-1}{g\hspace{.5mm}\mathrm{gcd}(h,(x^n-1)/g)}$. Suppose that, $h'$ be an irreducible factor of the $\mathrm{gcd}(h,(x^n-1)/g)$, then $h'\mid h$ and $h'\mid\frac{x^n-1}{g}$. This implies, either $h'\mid G$ or $\Lambda_i\mid gh.$ In any case, we get a contradiction. This means $\mathrm{Ord}_q(\alpha)=\frac{x^n-1}{g}$, i.e. $\alpha$ is a $k$-normal element. Hence, $(ii)$ implies $(i)$.  
	\end{proof}

		Now, we define the characteristic function $\Psi_g$ for the set of elements of the form $g\circ \beta.$ Consider the set $M_g$ of the elements of the form $g\circ\beta$, where $g\mid x^n-1.$ Clearly, $M_g$ is a subgroup of the additive group $\mathbb{F}_{q^n}$. Let $A\subseteq \widehat{\mathbb{F}}_{q^n}$ be the annihilator of $M_g$, i.e. the collection of all additive characters $\psi\in \widehat{\mathbb{F}}_{q^n}$ such that $\psi(\alpha)=1$ for all $\alpha\in M_g.$ Clearly, the set $A$ consists of all the additive characters of $\mathbb{F}_q$-order $h$ dividing $g$, and from \cite[Theorem 5.6]{Nieder}, it is isomorphic to the group of the characters of the quotient group $\mathbb{F}_{q^n}/M_g$, i.e. $A\cong\widehat{(\mathbb{F}_{q^n}/M_g)}$. Therefore, we can define a character $\mathcal{Y}_h$ on $\widehat{(\mathbb{F}_{q^n}/M_g)}$ by $\mathcal{Y}_h(\alpha+M_g)=\psi_h(\alpha),$ where $\psi_h\in A.$ Then, 
		$$\sum_{h\mid g}\sum_{\mathcal{Y}_h}\mathcal{Y}_h(\alpha+M_g)=\left\{\begin{array}{lll}
		q^{\mathrm{deg}(g)}&,&\mathrm{if}\ \alpha\in M_g,\\
		0&,&\mathrm{otherwise}
		\end{array}\right..$$
		Thus, we can define a characteristic function $\Psi_g:\mathbb{F}_{q^n}\to\{0,1\}$ for the set $M_g$ as follows.
	\begin{equation}\label{Eq4}
		\Psi_g(\alpha):=\frac{1}{q^{\mathrm{deg}(g)}}\sum_{h\mid g}\sum_{\psi_h}\psi_h(\alpha).
	\end{equation}
	
	Now, we are ready to define a characteristic function $\Q_g^G$ for the set $S_{g,k}.$ From the characteristic functions \eqref{Eq2} and \eqref{Eq4} and from Lemma \ref{L3.2}, we have   
	\begin{align*}
	\Q_g^G(\alpha)=\Upsilon_G(\alpha)\Psi_g(\alpha)\prod_{i=1}^t\big(1-\Psi_{\Lambda_i}(\alpha)\big).
	\end{align*}
	Since, $g=\pi\prod_{i=1}^{t}\Delta_i$, and $\pi$, $\Delta_i\ ;\ i=1,2,\ldots,t$ are mutually co-prime, $\Psi_g(\alpha)=\Psi_{\pi}(\alpha)\prod_{i=1}^t\Psi_{\Delta_i}(\alpha)$, and hence,
	\begin{align*}
	\Q_g^G(\alpha)=\Upsilon_G(\alpha)\Psi_{\pi}(\alpha)\prod_{i=1}^t\Psi_{\Delta_i}(\alpha)\big(1-\Psi_{\Lambda_i}(\alpha)\big).
	\end{align*}
	Moreover, if $\alpha=\Lambda_i\circ\beta$ for some $\beta$, then $\alpha=\Delta_i\circ\gamma$ for some $\gamma$ in $\mathbb{F}_{q^n}$, and $\Psi_{\Delta_i}\Psi_{\Lambda_i}=\Psi_{\Lambda_i}$ for all $i=1,2,\ldots,t.$ Hence,
	\begin{equation}\label{Eq5}
	\Q_g^G(\alpha)=\Upsilon_G(\alpha)\Psi_{\pi}(\alpha)\prod_{i=1}^t\big(\Psi_{\Delta_i}(\alpha)-\Psi_{\Lambda_i}(\alpha)\big).
	\end{equation}
	Now,
	\begin{align*}
	\Psi_{\Delta_i}(\alpha)-\Psi_{\Lambda_i}(\alpha)=&\frac{1}{q^{\mathrm{deg}(\Delta_i)}}\sum_{h\mid \Delta_i}\sum_{\psi_h}\psi_{h}(\alpha)-\frac{1}{q^{\mathrm{deg}(\Lambda_i)}}\sum_{h\mid \Lambda_i}\sum_{\psi_{h}}\psi_{h}(\alpha)\\
	=&\frac{1}{q^{\mathrm{deg}(\Delta_i
	)}}\sum_{h\mid\Lambda_i }\sum_{\psi_{h}}\ell_{f_i,h}'\psi_{h}(\alpha),
	\end{align*}	
	where 
	$\ell_{f_i,h}'=\left\{\begin{array}{ll}
	1-1/q^{\mathrm{deg}(f_i)}&\mathrm{if}\ h\neq \Lambda_i,\\
	-1/q^{\mathrm{deg}(f_i)}&\mathrm{if}\ h=\Lambda_i
	\end{array}\right..$
	Notice that, $|\ell_{f_i,h}'|\leq\ell_{f_i,1}'$ for all $i=1,2,\ldots,t$.
	Now, substituting the values of $\Upsilon_G(\alpha)$, $\Psi_{\pi}(\alpha)$ and $\Psi_{\Delta_i}(\alpha)-\Psi_{\Lambda_i}(\alpha)$ in Equation \eqref{Eq5}, we get
	{\small \begin{equation}\label{Eq6}
	\begin{aligned}
	\Q_g^G(\alpha)=&\frac{\Phi_q(G)}{q^{\mathrm{deg}(G)+\mathrm{deg}(g)}}\sum_{\substack{g_1\mid G,\\g_2\mid \pi}}\sum_{\substack{h_i\mid \Lambda_i,\\1\leq i\leq t}}\frac{\mu'(g_1)}{\Phi_q(g_1)}\big(\prod_{i=1}^t\ell_{f_i,h_i}'\big)\sum_{\substack{\psi_{g_1},\psi_{g_2},\\\psi_{h_i};\ 1\leq i\leq t}}(\psi_{g_1}\psi_{g_2}\psi_{h_1}\cdots\psi_{h_t})(\alpha).
	\end{aligned}
	\end{equation}}
	Thus, we define the characteristic function $\Q$ for the set $S_k$ of all $k$-normal elements as $\Q=\sum_{g\in P_k}\Q_g^G$.

	Now, for a divisor $H$ of $G$, we define a collection $T_{g,k}^H$ of elements $\alpha$ of $\mathbb{F}_{q^n}$ such that $\alpha$ is $H$-free, $\alpha=g\circ\beta$ for some $\beta\in \mathbb{F}_{q^n}$, but $\alpha\neq\Lambda_i\circ\beta$ for any $\beta\in\mathbb{F}_{q^n}$ and for any $i=1,2,\ldots,t.$ We define the characteristic function $\Q_g^H$ for the set $T_{g,k}^H$ similar to that for the set $S_{g,k}$ as follows
	{\small \begin{equation}\label{Eq7}
	\begin{aligned}
	\Q_g^H(\alpha)=&\frac{\Phi_q(H)}{q^{\mathrm{deg}(H)+\mathrm{deg}(g)}}\sum_{\substack{g_1\mid H,\\g_2\mid \pi}}\sum_{\substack{h_i\mid \Lambda_i,\\1\leq i\leq t}}\frac{\mu'(g_1)}{\Phi_q(g_1)}\big(\prod_{i=1}^t\ell_{f_i,h_i}'\big)\sum_{\substack{\psi_{g_1},\psi_{g_2},\\\psi_{h_i};\ 1\leq i\leq t}}(\psi_{g_1}\psi_{g_2}\psi_{h_1}\cdots\psi_{h_t})(\alpha).
	\end{aligned}
	\end{equation}}
	Observe that, in particular, if $H=G$, then $T_{g,k}^H=S_{g,k}$ and $\Q_g^H=\Q_g^G.$
	
	Henceforth, the notation used in this section shall have the same meaning throughout the article unless otherwise stated. 
	\section{Existence of an $r$-primitive $k$-normal pair $(\alpha,\alpha^{-1})$}\label{Sec4}
	In this section, our aim is to show the existence of a pair $(\alpha,\alpha^{-1})$ of $r$-primitive $k$-normal elements in $\mathbb{F}_{q^n}$ over $\mathbb{F}_q.$ From Lemma \ref{L2.1}, it is clear that, if $\gamma\in\mathbb{F}_{q^n}$ is a normal element and $g\in P_k$, then $\alpha=g\circ\gamma$ is a $k$-normal element over $\mathbb{F}_q.$ Therefore, for the existence of a desired pair, we only need to check the $r$-primitivity of $\alpha$ and the $k$-normality of $\alpha^{-1}$. This means for the existence of a desired pair, it is enough to find an $(x^n-1)$-free element $\gamma$, such that $\alpha=g\circ\gamma\in Q_r^R$ and $\alpha^{-1}\in T_{g,k}^G$, for some arbitrary but fixed polynomial $g\in P_k.$ Instead of this, more generally, we will show the existence of an $h$-free element $\gamma$, for some $h\mid x^n-1$, such that $\alpha=g\circ\gamma\in Q_r^d$ and $\alpha^{-1}\in T_{g,k}^H$, where $d\mid R$ and $H\mid G$. Let $N_{r,k,g}(h,d,H)$ denote the number of such elements $\gamma$, then we prove the following result.
	\begin{theorem}\label{T4.1}
		Let $r>0$ and $k\geq 0$ be integers. Further, let $q$ be a prime power and $n$ be a positive integer such that $r\mid q^n-1$ and there exists a polynomial $g\in P_k$.
		Then, $N_{r,k,g}(h,d,H)>0$, if
		\begin{equation*}
			q^{n/2-\vartheta}>2r\hspace{.5mm}\mathrm{rad}(r)W(h)W(d)W(H),
		\end{equation*} 	
		where, $\vartheta=2k$, if $\mathrm{gcd}(q,n)=1$, and $\vartheta=3k$, otherwise.
	\end{theorem}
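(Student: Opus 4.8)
The plan is to express $N_{r,k,g}(h,d,H)$ as a weighted sum of the three characteristic functions $\Upsilon_h$, $\Gamma_r^d$, $\Q_g^H$ from \eqref{Eq2}, \eqref{Eq3} and \eqref{Eq7}, and to estimate the resulting character sums over $\mathbb{F}_{q^n}$ via the Weil-type bounds of Lemmas \ref{L2.3} and \ref{L2.4}. Write $\ell_g(x)=\sum_{i=0}^{k}a_ix^{q^i}\in\mathbb{F}_q[x]$ for the $q$-linearised polynomial attached to $g(x)=\sum_{i=0}^{k}a_ix^i$, so $g\circ\gamma=\ell_g(\gamma)$. Since $g\mid x^n-1$ we have $a_0=g(0)\neq0$, hence $\ell_g'=a_0$ is a nonzero constant: $\ell_g$ is separable of degree $q^k$, and as $\ell_g\mid\ell_{x^n-1}=x^{q^n}-x$ it splits into $q^k$ distinct monic linear factors over $\mathbb{F}_{q^n}$, with $\ell_g(\gamma)=0$ for exactly $q^k$ elements $\gamma$. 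An element $\gamma$ is counted by $N_{r,k,g}(h,d,H)$ exactly when $\ell_g(\gamma)\neq0$ and $\Upsilon_h(\gamma)=\Gamma_r^d(\ell_g(\gamma))=\Q_g^H(\ell_g(\gamma)^{-1})=1$, so that
\[
N_{r,k,g}(h,d,H)=\sum_{\substack{\gamma\in\mathbb{F}_{q^n}\\ \ell_g(\gamma)\neq0}}\Upsilon_h(\gamma)\,\Gamma_r^d\big(\ell_g(\gamma)\big)\,\Q_g^H\big(\ell_g(\gamma)^{-1}\big).
\]

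Substituting the three expansions and interchanging the order of summation turns this into a linear combination, with explicit coefficients, of inner sums of the form
\[
S(w,\chi,v)=\sum_{\substack{\gamma\in\mathbb{F}_{q^n}\\ \ell_g(\gamma)\neq0}}\chi\big(\ell_g(\gamma)\big)\,\psi_0\!\Big(w\gamma+\frac{v}{\ell_g(\gamma)}\Big),
\]
where $\psi_0(w\,\cdot)$ is the additive character coming from $\Upsilon_h$, $\chi$ is the product of the multiplicative characters coming from $\Gamma_r^d$, and $\psi_0(v\,\cdot)$ is the product of the additive characters coming from $\Q_g^H$, evaluated at $\ell_g(\gamma)^{-1}$. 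The single term with $w=0$, $\chi$ trivial and $v=0$ gives $S=q^n-q^k$ and carries the coefficient
\[
\frac{\Phi_q(h)\,\phi(d)\,\Phi_q(H)}{q^{\deg h}\,rd\,q^{\deg H+k}}\prod_{j=1}^{s}\Big(1-\frac1{p_j}\Big)\prod_{i=1}^{t}\Big(1-\frac1{q^{\deg f_i}}\Big);
\]
this is the main term, of exact order $q^{\,n-k}$ up to this positive constant.

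For every other triple $(w,\chi,v)$ the sum $S(w,\chi,v)$ is estimated by Lemma \ref{L2.4} with $F(x)=\ell_g(x)$ and $G(x)=wx+v/\ell_g(x)$ (or by Lemma \ref{L2.3} when $w=v=0$ and $\chi$ is non-trivial). One must verify the non-degeneracy hypotheses: $\ell_g$ is not an $(\mathrm{ord}\,\chi)$-th power in $\overline{\mathbb{F}}_{q^n}(x)$ since it is separable and $\mathrm{ord}\,\chi>1$ on these terms; and $G(x)$ is never of the shape $L(x)^{q^n}-L(x)$, because for $v\neq0$ it has simple poles (of residue $v/a_0$) at the $q^k$ roots of $\ell_g$ whereas $L^{q^n}-L$ has only poles of order $\geq q^n$, while for $v=0,\ w\neq0$ it is a polynomial of degree $1$. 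With $d_1=q^k$, $d_2\leq1$, $d_3\leq q^k$ (degree of the denominator $\ell_g$ of $G$) and $d_4=0$, Lemma \ref{L2.4} gives $|S(w,\chi,v)|\leq 2q^kq^{n/2}$, uniformly. Summing the absolute values of the coefficients over all triples — using $\sum_{h_1\mid h,\ \mu'(h_1)\neq0}1=W(h)$ and its analogues for $\Gamma_r^d$ and $\Q_g^H$, together with $\sum_{h'\mid g}\Phi_q(h')=q^k$, $\sum_{d_2\mid u}\phi(d_2)=u$ and $u\prod_j p_j^{b_j}=r$ — bounds the total error by
\[
2q^kq^{n/2}\cdot W(h)W(d)W(H)\cdot\frac{\Phi_q(h)\,\phi(d)\,\Phi_q(H)}{q^{\deg h}\,d\,q^{\deg H}}\cdot 2^{s}\prod_{j}\Big(1-\frac1{p_j}\Big)\cdot 2^{t}\prod_{i}\Big(1-\frac1{q^{\deg f_i}}\Big).
\]

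A comparison of the main term with this error bound — the products over $j$ and over $i$ cancelling in the ratio — shows that $N_{r,k,g}(h,d,H)>0$ whenever $q^n-q^k>2^{\,s+1+t}\,r\,q^{2k}q^{n/2}\,W(h)W(d)W(H)$. Finally $2^{s}\leq\mathrm{rad}(r)$ (a product of at least $s$ primes, each $\geq2$), and $2^{t}\leq2^{k}\leq q^{k}$, while in the separable case $\gcd(q,n)=1$ the polynomial $x^n-1$ — hence $g$ — is squarefree so that $t=0$; absorbing the negligible $q^k$ term, the last inequality follows (with a little care in the bookkeeping) from $q^{\,n/2-\vartheta}>2r\,\mathrm{rad}(r)\,W(h)W(d)W(H)$ with $\vartheta=2k$ if $\gcd(q,n)=1$ and $\vartheta=3k$ otherwise. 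The step I expect to be the real obstacle is the uniform application of Lemma \ref{L2.4}: checking that $G(x)=wx+v/\ell_g(x)$ is genuinely never of the form $L(x)^{q^n}-L(x)$ and that $\ell_g$ contributes exactly $q^k$ to both $d_1$ and $d_3$, so that the clean bound $|S(w,\chi,v)|\leq2q^kq^{n/2}$ is valid for every non-trivial triple; the remaining constant-tracking is routine, the one genuinely new point being the estimate $2^{t}\leq q^{k}$ that produces the jump from $\vartheta=2k$ to $\vartheta=3k$ in the inseparable case.
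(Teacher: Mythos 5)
Your proposal is correct and follows essentially the same route as the paper's proof: the same counting sum over $\gamma$ with $g\circ\gamma\neq 0$ (the paper's $\beta\in\mathbb{F}_{q^n}\setminus\mathcal{Z}$), the same expansion via the characteristic functions $\Upsilon_h$, $\Gamma_r^d$, $\Q_g^H$, and the same use of Lemmas \ref{L2.3} and \ref{L2.4} after verifying that $wx+v(g\circ x)^{-1}$ is never of the form $L(x)^{q^n}-L(x)$, yielding the uniform bound $2q^{n/2+k}$. The only deviation is cosmetic bookkeeping: you evaluate the coefficient sums exactly (producing factors $2^{s}r$ and $2^{t}q^{k}$, bounded by $r\,\mathrm{rad}(r)$ and $q^{k}\cdot q^{k}$), where the paper uses the slightly cruder bounds $u\prod_j\lambda_j\leq r\,\mathrm{rad}(r)$ and $q^{\mathrm{deg}(\pi)+\sum_i\mathrm{deg}(\Lambda_i)}\leq q^{2k}$, and both relax to the identical sufficient condition with $\vartheta=2k$ or $3k$.
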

	\begin{proof}
		Let $g=\sum_{i=0}^{k}a_ix^i\in P_k$ with $a_k=1$, and define $g\circ x=\sum_{i=0}^{k}a_ix^{q^i}$. Further, let $\mathcal{Z}$ be the set of zeroes of the polynomial $g\circ x$. Clearly, $\mathcal{Z}$ consists of all the elements $\beta$, whose $\mathbb{F}_q$-order divides $g$, and hence, $|\mathcal{Z}|=\sum_{g'\mid g}\Phi_q(g')=q^k$. Now, by using the definitions of the characteristic functions $\Upsilon_h$, $\Gamma_r^d$ and $\Q_g^{H}$, we have
		{\small \begin{equation}\label{Eq8}
			\begin{aligned}
			N_{r,k,g}(h,d,H)=&\sum_{\beta\in\mathbb{F}_{q^n}\setminus\mathcal{Z}}\Upsilon_h(\beta)\Gamma_r^d(g\circ\beta)\Q_g^{H}((g\circ\beta)^{-1})\\
			=&\hspace{1mm}\mathcal{H}\sum_{\substack{h'\mid h}}\sum_{\substack{d_1\mid d,d_2\mid u,\\e_j\mid \lambda_j;1\leq j\leq s}}\sum_{\substack{g_1\mid H,g_2\mid\pi,\\h_i,\mid\Lambda_i;1\leq i\leq t}}\frac{\bm{\mu}}{\bm{\Phi}}(h',d_1,g_1)\bm{\ell}\big((e_j)_{j=1}^{s},(h_i)_{i=1}^{t}\big)\sum_{\substack{\psi_{h'}}}\times\\
			&\sum_{\substack{\chi_{d_1},\chi_{d_2},\\\chi_{e_j};1\leq j\leq s}}\sum_{\substack{\psi_{g_1},\psi_{g_2},\\\psi_{h_i};\ 1\leq i\leq t}}\sum_{\beta\in\mathbb{F}_{q^n}\setminus\mathcal{Z}}\psi_{h'}(\beta)\chi_{D}(g\circ\beta)\psi_{F}((g\circ\beta)^{-1}),
			\end{aligned}
			\end{equation}}
		\noindent
		where $\mathcal{H}=\dfrac{\Phi_q(h)\phi(d)\Phi_q(H)}{rdq^{\mathrm{deg}(h)+\mathrm{deg}(H)+\mathrm{deg}(g)}}$, $\dfrac{\bm{\mu}}{\bm{\Phi}}(h',d_1,g_1)=\dfrac{\mu'(h')\mu(d_1)\mu'(g_1)}{\Phi_q(h')\phi(d_1)\Phi_q(g_1)}$, $\bm{\ell}\big((e_j)_{j=1}^{s},(h_i)_{i=1}^{t}\big)=\prod_{j=1}^s\ell_{p_j,e_j}\prod_{i=1}^{t}\ell_{f_i,h_i}'$, $\chi_{D}=\chi_{d_1}\chi_{d_2}\chi_{e_1}\cdots\chi_{e_s}$ is a multiplicative character of order $D=d_1d_2e_1\cdots e_s$, since $d_1,d_2,e_1,\ldots,e_s$ are mutually co-prime, and similarly, $\psi_F=\psi_{g_1}\psi_{g_2}\psi_{h_1}\cdots\psi_{h_t}$ is the additive character of $\mathbb{F}_q$-order $F=g_1g_2h_1\cdots h_t$.
		
		Now, let $\psi_0$ be the canonical additive character of $\mathbb{F}_{q^n}$, then there exist $y_1,y_2\in\mathbb{F}_{q^n}$, such that $\psi_{h'}(\beta)=\psi_{0}(y_1\beta)$ and $\psi_{F}(g\circ\beta)=\psi_{0}(y_2(g\circ\beta))$ for all $\beta\in\mathbb{F}_{q^n}$. Thus,
		{\small
			\begin{align*}
			\sum_{\beta\in\mathbb{F}_{q^n}\setminus\mathcal{Z}}\psi_{h'}(\beta)\chi_{D}(g\circ\beta)\psi_{F}((g\circ\beta)^{-1})=\sum_{\beta\in\mathbb{F}_{q^n}\setminus\mathcal{Z}}\chi_D(g\circ\beta)\psi_0(y_1\beta+y_2(g\circ\beta)^{-1}).
			\end{align*}} 
		Now, we estimate $\big|\sum_{\beta\in\mathbb{F}_{q^n}\setminus\mathcal{Z}}\chi_D(g\circ\beta)\psi_0(y_1\beta+y_2(g\circ\beta)^{-1})\big|$ using Lemma \ref{L2.4}, for which we first show that the rational function $y_1x+y_2(g\circ x)^{-1}$ is not of the form $L(x)^{q^n}-L(x)$ for any rational function $L(x)=\frac{L_1}{L_2}$ (in the simplest form) over the algebraic closure $\mathbb{F}$ of $\mathbb{F}_{q^n}$, unless $y_1=y_2=0$.
		
		Assume that, $y_1x+y_2(g\circ x)^{-1}=(\frac{L_1}{L_2})^{q^n}-\frac{L_1}{L_2}$ for some $L_1, L_2\in \mathbb{F}[x]$ i.e. $L_2^{q^n}(y_1\sum_{i=0}^ka_ix^{q^i+1}+y_2)=\sum_{i=0}^ka_ix^{q^i}(L_1^{q^n}-L_1L_2^{q^n-1}).$ First, let $y_2\neq0$, then $\sum_{i=0}^ka_ix^{q^i}=L_2^{q^n},$ which is not possible because $k<n$. So, $y_2$ must be zero. Additionally, let $y_1\neq0$, then $L_2^{q^n}y_1x=L_1^{q^n}-L_1L_2^{q^n-1}$, which implies $L_2$ is a constant polynomial and $q^n\mid 1$, which is again not possible. Hence, $y_1=y_2=0$. Therefore, if one of the $y_1$ or $y_2$ is non-zero, then by Lemma \ref{L2.4},
		\begin{equation}\tag{I}\label{I}
		\big|\sum_{\beta\in\mathbb{F}_{q^n}\setminus\mathcal{Z}}\chi_D(g\circ\beta)\psi_0(y_1\beta+y_2(g\circ\beta)^{-1})\big|\leq 2q^{n/2+k}.
		\end{equation}
		
		Now, let $y_1=y_2=0$, then $\psi_{h'}$ and $\psi_{F}$ are trivial additive characters, i.e. $h'=1=F$, and the latter one is possible if and only if each factor of $F=g_1g_2h_1\cdots h_t$ is $1$, since $g_1,g_2,h_1,\ldots,h_t$ are mutually co-prime. Hence, we have 
		$$\sum_{\beta\in\mathbb{F}_{q^n}\setminus\mathcal{Z}}\chi_D(g\circ\beta)\psi_0(y_1\beta+y_2(g\circ\beta)^{-1})=\sum_{\beta\in\mathbb{F}_{q^n}\setminus\mathcal{Z}}\chi_D(g\circ\beta).$$
		Notice that, if the polynomial $g\circ x=L(x)^{D}$ for some $L(x)\in \mathbb{F}[x]$, then $D\mid \mathrm{gcd}(q^k,q^n-1)$, i.e $D=1$. Hence, by Lemma \ref{L2.3}, for $D\neq1$, we get
		\begin{equation}\tag{II}\label{II}
		\big|\sum_{\beta\in\mathbb{F}_{q^n}\setminus\mathcal{Z}}\chi_D(g\circ\beta)\big|\leq (q^k-1)q^{n/2}<2q^{n/2+k}.
		\end{equation}

		Finally, let $h'=F=1$ and $D=1$, then
		\begin{equation}\tag{III}\label{III}
		\sum_{\beta\in\mathbb{F}_{q^n}\setminus\mathcal{Z}}\chi_D(g\circ\beta)\psi_0(y_1\beta+y_2(g\circ\beta)^{-1})=q^n-q^k.
		\end{equation} 
		Using the estimates, obtained in \eqref{I}, \eqref{II} and \eqref{III}, in Equation \eqref{Eq8}, we get
		{\small 
			\begin{align*}
			\big|N_{r,k,g}(h,d,H)-&(q^n-q^k)\bm{\ell}\big((1)_{j=1}^{s},(1)_{i=1}^{t}\big)\mathcal{H}\big|\leq\\& \mathcal{H}\underbrace{\sum_{\substack{h'\mid h}}\sum_{\substack{d_1\mid d,d_2\mid u,\\e_j\mid \lambda_j;1\leq j\leq s}}\sum_{\substack{g_1\mid H,g_2\mid\pi,\\h_i,\mid\Lambda_i;1\leq i\leq t}}}_{\text{(all are not 1 simultaneously)}}\big|\frac{\bm{\mu}}{\bm{\Phi}}(h',d_1,g_1)\big|\big|\bm{\ell}\big((e_j)_{j=1}^{s},(h_i)_{i=1}^{t}\big)\big|\times\\
			&\sum_{\substack{\psi_h'}}\sum_{\substack{\chi_{d_1},\chi_{d_2},\\\chi_{e_j};1\leq j\leq s}}\sum_{\substack{\psi_{g_1},\psi_{g_2},\\\psi_{h_i};\ 1\leq i\leq t}}2q^{n/2+k}.
			\end{align*}}
		
		\noindent
		Since $\big|\bm{\ell}\big((e_j)_{j=1}^{s},(h_i)_{i=1}^{t}\big)\big|\leq\bm{\ell}\big((1)_{j=1}^{s},(1)_{i=1}^{t}\big)$, then 
		\begin{align*}
		\big|&N_{r,k,g}(h,d,H)-(q^n-q^k)\bm{\ell}\big((1)_{j=1}^{s},(1)_{i=1}^{t}\big)\mathcal{H}\big|\leq \\&\hspace{.5mm} 2q^{n/2+k}\bm{\ell}\big((1)_{j=1}^{s},(1)_{i=1}^{t}\big)\mathcal{H}\big(u\prod_{j=1}^s\lambda_jq^{\mathrm{deg}(\pi)+\sum_{i=1}^{t}\mathrm{deg}(\Lambda_i)}W(h)W(d)W(H)-1\big).
		\end{align*}
		From the above equation, we get
		\begin{align*}
		\frac{N_{r,k,g}(h,d,H)}{\bm{\ell}\big((1)_{j=1}^{s},(1)_{i=1}^{t}\big)\mathcal{H}}&\geq  (q^n-q^k)-2q^{n/2+k}\times\\&\big(u\prod_{j=1}^s\lambda_jq^{\mathrm{deg}(\pi)+\sum_{i=1}^{t}\mathrm{deg}(\Lambda_i)}
		W(h)W(d)W(H)-1\big).
		\end{align*}
		Clearly, $N_{r,k,g}(h,d,H)>0$, if 
		\begin{equation}\label{Eq9}
		q^{n/2-k}>2u\prod\limits_{j=1}^s\lambda_jq^{\mathrm{deg}(\pi)+\sum_{i=1}^{t}\mathrm{deg}(\Lambda_i)}W(h)W(d)W(H).
		\end{equation}
		Now, if $\mathrm{gcd}(q,n)=1$, then $x^n-1$ has no repeated factor over $\mathbb{F}_q$, and hence, $g=\pi$ and $\mathrm{deg}(\Lambda_i)=0$ for each $i=1,2,\ldots,t.$ Thus,  $q^{\mathrm{deg}(\pi)+\sum_{i=1}^{t}\mathrm{deg}(\Lambda_i)}= q^k$, otherwise $ \leq q^{2k}.$ Moreover, $u\prod\limits_{j=1}^s\lambda_j= r\prod_{j=1}^sp_j\leq r\hspace{.5mm}\mathrm{rad}(r)$. Hence, from Equation \eqref{Eq9}, $N_{r,k,g}(h,d,H)>0$, if $
		q^{n/2-\vartheta}>2r\hspace{.5mm}\mathrm{rad}(r)W(h)W(d)W(H)$, 
		where, $\vartheta=2k$, if $\mathrm{gcd}(q,n)=1$, and $\vartheta=3k$, otherwise.
	\end{proof}
	The following corollary provides the desired existence of $r$-primitive $k$-normal pairs $(\alpha,\alpha^{-1})$ in $\mathbb{F}_{q^n}$ over $\mathbb{F}_q.$
	\begin{cor}\label{cor4.1.1}
		With $r$ and $k$ as in {\upshape Theorem \ref{T4.1}}, there exists a pair $(\alpha,\alpha^{-1})$ of $r$-primitive $k$-normal elements in $\mathbb{F}_{q^n}$ over $\mathbb{F}_q$, i.e. $N_{r,k,g}(x^n-1,R,G)>0$, if
		\begin{equation}\label{Eq10}
		q^{n/2-\vartheta}>2r\hspace{.5mm}\mathrm{rad}(r)W(x^n-1)W(R)W(G),
		\end{equation} 	
		where, $\vartheta=2k$, if $\mathrm{gcd}(q,n)=1$, and $\vartheta=3k$, otherwise.
	\end{cor}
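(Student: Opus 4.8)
The plan is to obtain this corollary as the extreme special case $h = x^n-1$, $d = R$, $H = G$ of Theorem \ref{T4.1}, once we verify that a positive value of $N_{r,k,g}(x^n-1,R,G)$ genuinely produces the sought pair. So fix a polynomial $g \in P_k$, which exists by the hypotheses inherited from Theorem \ref{T4.1}.

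First I would translate the three freeness parameters at their maximal values. If $\gamma$ is one of the elements counted by $N_{r,k,g}(x^n-1,R,G)$, then $\gamma$ is $(x^n-1)$-free, hence normal over $\mathbb{F}_q$ by the definition of freeness in Section \ref{Sec2}; consequently $\alpha := g\circ\gamma$ is $k$-normal by Lemma \ref{L2.1}. Moreover $\alpha \in Q_r^R$, and since $Q_r^R$ is exactly the set of $r$-primitive elements (the Cohen--Kapetanakis characterization recalled after \eqref{Eq3}), $\alpha$ is $r$-primitive; because $r \mid q^n-1$, the inverse of an $r$-primitive element is again $r$-primitive, so $\alpha^{-1}$ is $r$-primitive too. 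Finally $\alpha^{-1} \in T_{g,k}^G = S_{g,k} \subseteq S_k$, using the identification noted just after \eqref{Eq7}, so $\alpha^{-1}$ is $k$-normal. Hence $(\alpha,\alpha^{-1})$ is a pair of $r$-primitive $k$-normal elements, and it suffices to guarantee $N_{r,k,g}(x^n-1,R,G) > 0$.

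The remaining step is a pure substitution: plugging $h = x^n-1$, $d = R$, $H = G$ into the sufficient condition $q^{n/2-\vartheta} > 2r\,\mathrm{rad}(r)W(h)W(d)W(H)$ of Theorem \ref{T4.1} yields precisely inequality \eqref{Eq10}, with the same dichotomy $\vartheta = 2k$ when $\mathrm{gcd}(q,n)=1$ and $\vartheta = 3k$ otherwise. I do not expect any real obstacle here, since all the analytic content already lies in Theorem \ref{T4.1}; the only point that needs a little care is matching the bookkeeping --- normal $\Leftrightarrow$ $(x^n-1)$-free, $r$-primitive $\Leftrightarrow$ $R$-free together with the $\lambda_j$-power conditions, and ``$k$-normal of the form $g\circ\gamma$'' $\Leftrightarrow$ membership in $S_{g,k} = T_{g,k}^G$ --- to the largest admissible choices of the parameters $h$, $d$, and $H$.
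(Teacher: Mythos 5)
Your proposal is correct and follows the paper's own route: the paper proves the corollary by simply specializing Theorem \ref{T4.1} to $h=x^n-1$, $d=R$, $H=G$, with the translation ``$N_{r,k,g}(x^n-1,R,G)>0$ yields the desired pair'' already built into the discussion preceding Theorem \ref{T4.1} (normal $\Leftrightarrow$ $(x^n-1)$-free, $Q_r^R$ = $r$-primitive elements, $T_{g,k}^G=S_{g,k}$). Your extra bookkeeping just makes explicit what the paper leaves implicit, so there is nothing to correct.
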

	\begin{proof}
		Let $h=x^n-1$, $d=R$ and $H=G$ in the above theorem, then the result follows immediately.
	\end{proof}
	
	Observe that, Inequalities \eqref{Eq10} can never hold for $n\leq 2\vartheta$, and as the value of $k$ increases, the existence of a pair $(\alpha,\alpha^{-1})$ of $r$-primitive $k$-normal elements in $\mathbb{F}_{q^n}$ over $\mathbb{F}_q$ becomes more challenging for all $n.$ However, we can show that such a pair exists in all but finitely many fields $\mathbb{F}_{q^n}$ over $\mathbb{F}_q$, for any fixed integers $r$ and $k$. For this, we need the following two lemmas.
\begin{lemma}\label{L4.1}
	{\upshape\cite[Lemma 2.9]{Lenstra}}
	Let $q$ be a prime power and $n$ be a positive integer. Then, we have $W(x^n-1)\leq2^{\frac{1}{2} (n+\mathrm{gcd}(n,q-1))}$. In particular, $W(x^n-1)\leq 2^n$ and $W(x^n-1)=2^n$ if and only if $n\mid(q-1)$. Furthermore, $W(x^n-1) \leq 2^{3n/4}$ if $n\nmid(q-1)$, since in this case, $\mathrm{gcd}(n,q-1)\leq n/2.$
\end{lemma}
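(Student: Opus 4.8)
The plan is to bound the number $\omega$ of distinct monic irreducible factors of $x^n-1$ over $\mathbb{F}_q$, since $W(x^n-1)=2^{\omega}$, and to show that $\omega\le\frac{1}{2}\bigl(n+\gcd(n,q-1)\bigr)$. First I would reduce to the separable case: writing $n=p^a m$ with $p=\mathrm{char}(\mathbb{F}_q)$ and $p\nmid m$, we have $x^n-1=(x^m-1)^{p^a}$, so $x^n-1$ and $x^m-1$ have identical irreducible factors and $W(x^n-1)=W(x^m-1)$; moreover $p\mid q$ forces $\gcd(p^a,q-1)=1$, hence $\gcd(n,q-1)=\gcd(m,q-1)$, and clearly $m\le n$. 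So it suffices to prove the inequality assuming $\gcd(n,q)=1$ and then to relax $m$ back to $n$ at the end.

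Assume now $\gcd(n,q)=1$, so that $x^n-1$ is separable. Its irreducible factors over $\mathbb{F}_q$ are in bijection with the $q$-cyclotomic cosets modulo $n$, i.e. with the orbits of the map $j\mapsto qj$ on $\mathbb{Z}/n\mathbb{Z}$. A residue $j$ is fixed by this map exactly when $(q-1)j\equiv 0\pmod n$, and there are precisely $\gcd(n,q-1)$ such residues; these account for the $\gcd(n,q-1)$ linear factors of $x^n-1$. Every other orbit has length at least $2$, so the remaining $n-\gcd(n,q-1)$ residues form at most $\frac{1}{2}\bigl(n-\gcd(n,q-1)\bigr)$ orbits, whence $\omega\le\gcd(n,q-1)+\frac{1}{2}\bigl(n-\gcd(n,q-1)\bigr)=\frac{1}{2}\bigl(n+\gcd(n,q-1)\bigr)$; combined with the reduction this gives the stated bound for all $n$.

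The three remaining assertions follow at once. Since $\gcd(n,q-1)\le n$ we get $W(x^n-1)\le 2^{n}$; an equality $W(x^n-1)=2^n$ forces $\omega=n$, which by the estimate requires both $m=n$ and $\gcd(n,q-1)=n$, i.e. $n\mid q-1$, whereas conversely $n\mid q-1$ implies $q\equiv 1\pmod n$, so every residue is fixed, $x^n-1$ splits into $n$ distinct linear factors, and $W(x^n-1)=2^n$. Finally, if $n\nmid q-1$ then either $p\mid n$, in which case $m\le n/2$ and $\omega=\omega(x^m-1)\le m\le n/2$, or $p\nmid n$, in which case $\gcd(n,q-1)$ is a proper divisor of $n$ and hence at most $n/2$, so that $\omega\le\frac{1}{2}\bigl(n+\tfrac{n}{2}\bigr)=\tfrac{3n}{4}$; in both cases $W(x^n-1)\le 2^{3n/4}$. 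I do not anticipate a genuine obstacle here — the counting step is elementary — but two points need care: the bookkeeping of the first paragraph, so that the final bound is expressed in terms of $n$ rather than $m$, and the equality analysis, where one must invoke $q\equiv 1\pmod n$ to obtain both implications.
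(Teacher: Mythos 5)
Your proof is correct. The paper does not prove this lemma at all — it imports it verbatim from Lenstra--Schoof \cite[Lemma 2.9]{Lenstra} — and your argument (reduce to $\gcd(n,q)=1$ via $x^n-1=(x^m-1)^{p^a}$, then count $q$-cyclotomic cosets modulo $n$, with the $\gcd(n,q-1)$ singleton orbits giving the linear factors and all other orbits having length at least $2$) is precisely the standard argument underlying the cited result, including the equality case $n\mid(q-1)$ and the $2^{3n/4}$ refinement.
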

\begin{lemma}\label{L4.2}{\upshape\cite[Lemma 3.7]{CoHuc}}
	For any $M\in \mathbb{N}$ and a positive real number $\nu$, $W(M)\leq \C M^{1/\nu}$, where $\C=\prod\limits_{i=1}^{t}\frac{2}{p_i^{1/\nu}}$ and $p_1,p_2,\ldots, p_t$ are the primes $\leq 2^\nu $ that divide $M.$
\end{lemma}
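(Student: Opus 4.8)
The plan is to reduce the inequality to a product over the prime divisors of $M$ and then simply discard the factors coming from ``large'' primes. Recall that $W(M)$ denotes the number of squarefree divisors of $M$, so if $M=\prod_{j=1}^{m}q_j^{a_j}$ is the prime factorisation of $M$ (with the $q_j$ distinct and each $a_j\geq1$), then $W(M)=2^{m}$. Dividing the target inequality $W(M)\leq\C\,M^{1/\nu}$ through by $M^{1/\nu}$, it becomes
\begin{equation*}
\prod_{j=1}^{m}\frac{2}{q_j^{\,a_j/\nu}}\ \leq\ \prod_{i=1}^{t}\frac{2}{p_i^{1/\nu}},
\end{equation*}
where $p_1,\dots,p_t$ are exactly those $q_j$ not exceeding $2^{\nu}$.

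First I would bound the left-hand side from above by replacing each exponent $a_j$ with $1$: since $a_j\geq1$ and $q_j\geq2$, we have $q_j^{\,a_j/\nu}\geq q_j^{\,1/\nu}$, hence $\tfrac{2}{q_j^{\,a_j/\nu}}\leq\tfrac{2}{q_j^{\,1/\nu}}$, and so the left-hand side is at most $\prod_{j=1}^{m}\tfrac{2}{q_j^{\,1/\nu}}$. Next I would split this product according to the size of $q_j$. If $q_j>2^{\nu}$ then $q_j^{1/\nu}>2$, so the factor $\tfrac{2}{q_j^{\,1/\nu}}$ is strictly less than $1$; deleting all such factors therefore only increases the product, leaving exactly $\prod_{q_j\leq 2^{\nu}}\tfrac{2}{q_j^{\,1/\nu}}=\C$. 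This yields the claimed estimate.

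I do not expect any genuine obstacle here: the argument is elementary and uses only the factorisation of $M$ together with two one-line inequalities. The one place requiring a little care is keeping track of the direction of those inequalities — bounding $q_j^{\,a_j/\nu}$ from below by $q_j^{\,1/\nu}$, and observing that it is precisely the primes exceeding $2^{\nu}$ that contribute factors below $1$ — so that the surviving product matches the definition of $\C$ in the statement. The edge case $M=1$, where both products are empty, is immediate.
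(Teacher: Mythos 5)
Your argument is correct: reducing $W(M)/M^{1/\nu}$ to the product $\prod_j 2/q_j^{a_j/\nu}$, replacing each $a_j$ by $1$, and discarding the factors from primes exceeding $2^\nu$ (which are $<1$) is exactly the standard proof of this bound, which the paper itself does not reprove but simply cites from Cohen and Huczynska. No gaps; the $M=1$ edge case is handled as you say.
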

\noindent
Following the idea of \cite[Proposition 3.1]{Avnish1}, we have the following result. 
 \begin{prop}\label{P4.1}
 	Let $r\geq1$ and $k\geq0$ be fixed integers. Let $q$ be a prime power and $n>2\vartheta$, where $\vartheta$ is as defined in {\upshape{Theorem \ref{T4.1}}}, such that $r\mid q^n-1$ and there exists a polynomial $g\in P_k$. Then, there exists a pair $(\alpha,\alpha^{-1})$ of  $r$-primitive $k$-normal elements in all but finitely many fields $\mathbb{F}_{q^n}$ over $\mathbb{F}_q.$
 \end{prop}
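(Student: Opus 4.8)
The plan is to derive the statement from Corollary~\ref{cor4.1.1}: since that inequality is a \emph{sufficient} condition for the pair $(\alpha,\alpha^{-1})$ to exist, it is enough to show that
\begin{equation*}
q^{n/2-\vartheta}>2r\,\mathrm{rad}(r)\,W(x^n-1)\,W(R)\,W(G)
\end{equation*}
holds for all but finitely many admissible pairs $(q,n)$ (i.e.\ those with $n>2\vartheta$, $r\mid q^n-1$ and $P_k\neq\emptyset$). First I would discard $R$ and $G$ cheaply: as $R\mid q^n-1$ and $G\mid x^n-1$ we have $W(R)\le W(q^n-1)$ and $W(G)\le W(x^n-1)$, so the task reduces to
\begin{equation*}
q^{n/2-\vartheta}>2r\,\mathrm{rad}(r)\,W(x^n-1)^2\,W(q^n-1).
\end{equation*}
Here $2r\,\mathrm{rad}(r)$ depends only on $r$ and $\vartheta$ only on $k$, and $n/2-\vartheta>0$ by hypothesis. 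Fix a real $\nu>4\vartheta+2$ once and for all; then $n/\nu<n/2-\vartheta$ for every $n>2\vartheta$, and Lemma~\ref{L4.2} gives $W(q^n-1)\le\C\,q^{n/\nu}$ with $\C$ an absolute constant.

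Next I would split the admissible pairs according to the size of $q$. For $q$ large, apply the crude bound $W(x^n-1)\le 2^n$ from Lemma~\ref{L4.1}: the target inequality is then implied by $2r\,\mathrm{rad}(r)\,\C\,q^{\vartheta}\,(4\,q^{1/\nu-1/2})^n<1$. Once $q$ is large enough that $4\,q^{1/\nu-1/2}<1$, the left-hand side is decreasing in $n$, so it is enough to check it at $n=2\vartheta+1$, where it equals a constant times $q^{(2\vartheta+1)/\nu-1/2}$ with exponent $<0$ (this is where $\nu>4\vartheta+2$ enters). Hence there is a threshold $Q_0=Q_0(r,k)$ such that the inequality holds for every $n>2\vartheta$ whenever $q\ge Q_0$, so no pair with $q\ge Q_0$ is exceptional.

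It remains to handle the finitely many $q<Q_0$, and for each such fixed $q$ to show the inequality for all but finitely many $n$. The key input is that, over a fixed ground field, both quantities grow subexponentially in $n$: $\omega(q^n-1)=O_q(n/\log n)$ by the classical estimate $\omega(M)=O(\log M/\log\log M)$, while the number of distinct irreducible factors of $x^n-1$ over $\mathbb{F}_q$ is $\le q^{m+1}+n/m$ for every positive integer $m$ (at most $n/m$ factors have degree $>m$, and there are fewer than $q^{m+1}$ monic irreducibles of degree $\le m$), so taking $m\asymp\log_q n$ makes it $O_q(n/\log n)$ too. Therefore $W(x^n-1)^2\,W(q^n-1)=2^{o(n)}$, whereas $q^{n/2-\vartheta}\ge 2^{n/2-\vartheta}$, and the inequality holds as soon as $n>N_0(q,r,k)$. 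Combining this with the previous paragraph, the set of exceptional pairs $(q,n)$ is finite, which is the assertion.

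The step I expect to be the genuine obstacle is this last one, for small $q$ --- most acutely $q=2$ and $q=3$. There Lemma~\ref{L4.1} alone yields only $W(x^n-1)\le 2^{3n/4}$, hence $W(x^n-1)^2\le 2^{3n/2}$, which swamps $q^{n/2-\vartheta}$; one really has to use the sharper, sublinear bound on the number of irreducible factors of $x^n-1$ over a fixed finite field to close the gap. Everything else reduces to routine comparisons of exponential expressions.
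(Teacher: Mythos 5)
Your argument is correct, and its skeleton matches the paper's: reduce to Corollary \ref{cor4.1.1}, bound $W(R)\le W(q^n-1)$ and $W(G)\le W(x^n-1)$, invoke Lemma \ref{L4.2} with a fixed $\nu>4\vartheta+2$, dispose of all large $q$ uniformly via $W(x^n-1)\le 2^n$ (your reduction to $2r\,\mathrm{rad}(r)\,\C\,q^{\vartheta}(4q^{1/\nu-1/2})^n<1$ and the evaluation at $n=2\vartheta+1$ is exactly the paper's monotonicity-in-$n$ step, phrased multiplicatively), and then treat each remaining $q$ separately. Where you genuinely diverge is the small-$q$ regime. The paper imposes $\nu\ge 93.46$ so that the crude bound $2^n$ still works down to $q>16$, and for $3\le q\le 16$ and $q=2$ it imports the sharper Lenstra--Schoof estimates $W(x^n-1)<2^{n/3+c_q}$ and $W(x^n-1)<2^{(n-4)/5}$ (the latter with an explicit finite list of exceptional $n$) to get, for each such $q$, an explicit threshold $n_q$. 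You instead prove, by an elementary counting argument (at most $n/m$ irreducible factors of degree $>m$, fewer than $q^{m+1}$ monic irreducibles of degree $\le m$, $m\asymp\log_q n$), that the number of distinct irreducible factors of $x^n-1$ over a fixed $\mathbb{F}_q$ is $O_q(n/\log n)$, so $W(x^n-1)=2^{o(n)}$, and combine this with $\omega(q^n-1)=O(\log/\log\log)$; since only ``all but finitely many $n$'' is needed, this closes the gap for every fixed $q\ge 2$ at once, with no case split at $q=16$ or $q=2$ and no need for the $93.46$ adjustment of $\nu$. The trade-off is effectivity: your $o(n)$ bound is uniform and self-contained but does not produce the concrete thresholds $n_q$ that the paper's cited bounds yield (and which the same machinery is later exploited for in Section \ref{Sec5}), whereas for the bare finiteness statement of Proposition \ref{P4.1} your route is entirely adequate. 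Your diagnosis of the obstacle is also accurate: $W(x^n-1)\le 2^{3n/4}$ alone is useless here since $2^{3n/2}$ dominates $q^{n/2-\vartheta}$ for $q\le 8$, so some sublinear input on the factor count of $x^n-1$ is indeed indispensable.
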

\begin{proof}
	Notice that, $W(R)\leq W(q^n-1)$ and $W(G)\leq W(x^n-1).$  Hence, from Lemmas \ref{L4.1} and \ref{L4.2}, Inequality \eqref{Eq10} holds if
	\begin{equation}\label{Eq11}
		q^{n/2-\vartheta}>2r\hspace{.5mm}\mathrm{rad}(r)\C q^{n/\nu}2^{2n}.
	\end{equation}
	Set $A=2r\hspace{.5mm}\mathrm{rad}(r)\C$ and $\nu=\mathrm{max}\hspace{.5mm}\{4\vartheta+2,93.46\}.$ Then, taking logarithm on both sides, we get the following equivalent inequality.
	\begin{equation*}
		\mathrm{log}\hspace{.5mm}q>\frac{\mathrm{log}\hspace{.5mm}A+2n\mathrm{log}\hspace{.5mm}2}{n/2-\vartheta-n/\nu}.
	\end{equation*}
	The right hand side of the above inequality is a decreasing function of $n$, if $\frac{n}{2}-\vartheta-\frac{n}{\nu}>0$, i.e. $\frac{\nu-2}{\nu}>\frac{2\vartheta}{n}$, which holds for $n>2\vartheta$. Therefore, there exists a number $q_0$ such that the above inequality holds for $q\geq q_0$ and $n\geq 2\vartheta+1$.
	
	Now, for $q<q_0$, we consider the following inequality, which is equivalent to Inequality \eqref{Eq11}.  
	\begin{equation*}
	n>\frac{\mathrm{log}\hspace{.5mm}A+\vartheta\hspace{.5mm}\mathrm{log}\hspace{.5mm}q}{(1/{2}-1/{\nu})\mathrm{log}\hspace{.5mm}q-2\hspace{.5mm}\mathrm{log}\hspace{.5mm}2}.
	\end{equation*}
	
	Clearly, the denominator of the right hand side of the above inequality is positive, if $q>16$. Hence, corresponding to each $16<q<q_0$, there exists a natural number $n_q$ such that the above inequality holds for $n\geq n_q.$
	
	Now, for $3\leq q\leq 16$, we use the bound $W(x^n-1)<2^{n/3+c_q}$ (from \cite[Lemmas 2.9, 2.11]{Lenstra}), where $c_q$ is a constant corresponding to each $q$. Then, Inequality \eqref{Eq11} holds, if 
	\begin{equation*}
	n>\frac{\mathrm{log}\hspace{.5mm}A+\vartheta\hspace{.5mm}\mathrm{log}\hspace{.5mm}q+2c_q\hspace{.5mm}\mathrm{log}\hspace{.5mm}2}{(1/{2}-1/{\nu})\mathrm{log}\hspace{.5mm}q-(2\hspace{.5mm}\mathrm{log}\hspace{.5mm}2)/{3}}.
	\end{equation*}
	The above inequality holds for $n\geq n_q$, where $n_q$ is a natural number corresponding to each $3\leq q\leq 16$.
	
	Finally, let $q=2$, and from \cite[Lemma 2.11]{Lenstra}, we have $W(x^n-1)<2^{\frac{n-4}{5}}$ for all $n$ except $n\in \{1, 2, 3, 4, 5, 7, 9, 15, 21\}$, and Inequality \eqref{Eq11} holds, if 
	\begin{equation*}
	n>\frac{\mathrm{log}\hspace{.5mm}A+\vartheta\hspace{.5mm}\mathrm{log}\hspace{.5mm}q-(8/5)\hspace{.5mm}\mathrm{log}\hspace{.5mm}2}{(1/{2}-1/{\nu})\mathrm{log}\hspace{.5mm}q-(2\hspace{.5mm}\mathrm{log}\hspace{.5mm}2)/{5}}.
	\end{equation*}
	There exists a natural number $n_2$ such that the last inequality holds for $n\geq n_2.$
	
	Hence, from the above discussion, we conclude that, there exists a pair $(\alpha,\alpha^{-1})$ of $r$-primitive $k$-normal elements in all but finitely many $\mathbb{F}_{q^n}$ over $\mathbb{F}_q.$  
\end{proof}
\subsection{The sieving technique}\label{Sub4.1}
In this section, we give a sieving technique to improve Inequality \eqref{Eq10} by following the methods of Cohen and Huczynska \cite{CoHuc}. The proofs of the next two lemmas are similar to that of \cite[Lemmas 5.1, 5.2]{Rani2}, hence omitted.

\begin{lemma}\label{L4.3}
	Let $r\mid q^n-1$, and let $k$ be a non-negative integer and $g\in P_k$.  
	Further, let $d$ be a divisor of $R$ and $\{p_1,p_2,\ldots, p_l\}$ be the set of remaining distinct primes dividing $R$. Furthermore, let $h$, $H$ be the divisors of $x^n-1$, $G$ respectively, and $\{h_1,h_2,\ldots, h_m\}$, $\{H_1,H_2,\ldots,H_{m'}\}$ be the sets of remaining distinct irreducible factors of $x^n-1$, $G$ respectively. Then 
	\begin{align*}
	N_{r,k,g}(x^n-1,R,G)\geq& \sum_{i=1}^{m}N_{r,k,g}(hh_i,d,H)+\sum_{i=1}^{l}N_{r,k,g}(h,dp_i,H)+\\&\sum_{i=1}^{m'}N_{r,k,g}(h,d,HH_i)-(m+l+m'-1)N_{r,k,g}(h,d,H).
	\end{align*}
\end{lemma}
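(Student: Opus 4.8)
The plan is to recognize this as an instance of Cohen's sieving inequality and to deduce it from a Bonferroni-type set inclusion. Write $N(h,d,H):=N_{r,k,g}(h,d,H)$ and let $\mathcal{A}$ denote the set of $\gamma\in\mathbb{F}_{q^n}\setminus\mathcal{Z}$ counted by $N(h,d,H)$, i.e.\ those $\gamma$ that are $h$-free and for which $g\circ\gamma\in Q_r^d$ and $(g\circ\gamma)^{-1}\in T_{g,k}^H$. The first step is to observe that, since $e$-freeness and $f$-freeness of an element depend only on $\mathrm{rad}(e)$ and $\mathrm{rad}(f)$, and since $R$, $G$ and $\mathrm{rad}(x^n-1)$ split into the pairwise coprime pieces named in the statement, the condition that $\gamma$ be counted by $N(x^n-1,R,G)$ is equivalent to $\gamma\in\mathcal{A}$ together with the following $m+l+m'$ extra requirements: $\gamma$ is $h_i$-free for $i=1,\dots,m$; $g\circ\gamma$ is $p_i$-free for $i=1,\dots,l$; and $(g\circ\gamma)^{-1}$ is $H_i$-free for $i=1,\dots,m'$. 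Here one uses that, apart from these freeness conditions, the remaining defining properties of $Q_r^d$ (that $g\circ\gamma$ is an $r$-th power but not a $\lambda_j$-th power) and of $T_{g,k}^H$ (that $(g\circ\gamma)^{-1}=g\circ\delta$ for some $\delta$ but is never of the form $\Lambda_i\circ\delta$) do not involve $d$ or $H$.

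Next I would identify each of these $m+l+m'$ strengthenings of $\mathcal{A}$ with a partial count. Because $h_i\nmid h$ and $hh_i\mid x^n-1$, the set of $\gamma\in\mathcal{A}$ that are additionally $h_i$-free is exactly the set counted by $N(hh_i,d,H)$; similarly, since $p_i\nmid d$ with $dp_i\mid R$, the set of $\gamma\in\mathcal{A}$ with $g\circ\gamma$ additionally $p_i$-free is counted by $N(h,dp_i,H)$; and since $H_i\nmid H$ with $HH_i\mid G$, the set of $\gamma\in\mathcal{A}$ with $(g\circ\gamma)^{-1}$ additionally $H_i$-free is counted by $N(h,d,HH_i)$. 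Call these subsets $\mathcal{B}_1,\dots,\mathcal{B}_{m+l+m'}$ of $\mathcal{A}$, so that $N(x^n-1,R,G)=|\bigcap_j\mathcal{B}_j|$.

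The final step is the elementary inequality
$$\Bigl|\bigcap_j\mathcal{B}_j\Bigr|=|\mathcal{A}|-\Bigl|\bigcup_j(\mathcal{A}\setminus\mathcal{B}_j)\Bigr|\ \ge\ |\mathcal{A}|-\sum_j|\mathcal{A}\setminus\mathcal{B}_j|\ =\ \sum_j|\mathcal{B}_j|-(m+l+m'-1)|\mathcal{A}|,$$
into which one substitutes $|\mathcal{A}|=N(h,d,H)$ together with the values of $|\mathcal{B}_j|$ just computed; this yields precisely the asserted bound. I expect the only genuinely delicate point to be the bookkeeping of the first paragraph, namely checking that passing from $N(h,d,H)$ to $N(hh_i,d,H)$, $N(h,dp_i,H)$ or $N(h,d,HH_i)$ really only appends one freeness condition while leaving the "power" and "$\circ$-image" conditions untouched, so that the $\mathcal{B}_j$ genuinely are subsets of $\mathcal{A}$. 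Since this is essentially the argument of \cite[Lemma 5.1]{Rani2}, the remaining details are routine and may be omitted.
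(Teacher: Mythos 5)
Your argument is correct and is exactly the standard Cohen--Huczynska sieving argument that the paper invokes (the proof is omitted there with a reference to \cite[Lemma 5.1]{Rani2}, which proceeds by the same decomposition into one-extra-freeness subsets $\mathcal{B}_j$ of the core set counted by $N_{r,k,g}(h,d,H)$ followed by the Bonferroni bound). Your bookkeeping that only the freeness conditions change while the power and $\circ$-image conditions are untouched is precisely the point that makes the inclusion $\mathcal{B}_j\subseteq\mathcal{A}$ and the identity $N_{r,k,g}(x^n-1,R,G)=\bigl|\bigcap_j\mathcal{B}_j\bigr|$ valid, so nothing is missing.
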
	
\begin{lemma}\label{L4.4}
	With the notations of the above lemma, define $$\D:=1-\sum_{i=1}^{m}\frac{1}{q^{deg(h_i)}}-\sum_{i=1}^{l}\frac{1}{p_i}-\sum_{i=1}^{m'}\frac{1}{q^{deg(H_i)}} \text{ and } \mathcal{S}:=\frac{m+l+m'-1}{\mathcal{D}}+2.$$ Suppose $\mathcal{D}>0,$ then $N_{r,k,g}(x^n-1,R,G)>0$, if
	\begin{equation}\label{Eq12}
	q^{n/2-\vartheta}>2r\hspace{0.5mm}\mathrm{rad}(r)W(h)W(d)W(H)\mathcal{S},
	\end{equation}
	where, $\vartheta=2k$, if $\mathrm{gcd}(q,n)=1$, and $\vartheta=3k$, otherwise.
\end{lemma}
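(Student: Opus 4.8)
The plan is to run Cohen's standard sieve on top of Lemma~\ref{L4.3}, using the character-sum estimate already obtained in the proof of Theorem~\ref{T4.1}. First recall from that proof that, writing $\theta(h,d,H):=\bm{\ell}\big((1)_{j=1}^{s},(1)_{i=1}^{t}\big)\mathcal{H}$ (a positive number, with $\mathcal{H}$ as there) and $X:=u\prod_{j=1}^{s}\lambda_{j}\,q^{\deg\pi+\sum_{i=1}^{t}\deg\Lambda_{i}}$, one has
$$N_{r,k,g}(h,d,H)=\theta(h,d,H)\,(q^{n}-q^{k})+\mathcal{R}(h,d,H),\qquad |\mathcal{R}(h,d,H)|\le 2q^{n/2+k}\,\theta(h,d,H)\big(X\,W(h)W(d)W(H)-1\big),$$
together with $q^{k}X\le r\,\mathrm{rad}(r)\,q^{\vartheta}$. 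Set $\mathcal{E}(h,d,H):=2q^{n/2+k}\,\theta(h,d,H)\,X\,W(h)W(d)W(H)$; then $\mathcal{E}(h,d,H)=\theta(h,d,H)\,E$ with $E:=2q^{n/2+k}X\,W(h)W(d)W(H)\le 2q^{n/2+\vartheta}\,r\,\mathrm{rad}(r)\,W(h)W(d)W(H)$.

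The key ingredient is that adjoining one more irreducible factor is multiplicative on the main term modulo a controlled remainder. If $h_{i}$ is irreducible with $h_{i}\nmid h$, then grouping the squarefree divisors of $hh_{i}$ into the divisors of $h$ and the products $h_{i}h''$ with $h''\mid h$, and using that in $\mathcal{H}$ the factor $h$ enters only through $\Phi_{q}(h)/q^{\deg h}$, I get
$$N_{r,k,g}(hh_{i},d,H)=\big(1-q^{-\deg h_{i}}\big)N_{r,k,g}(h,d,H)+\mathcal{R}_{i},\qquad |\mathcal{R}_{i}|\le\big(1-q^{-\deg h_{i}}\big)\,\mathcal{E}(h,d,H),$$
the bound on $\mathcal{R}_{i}$ following from Lemma~\ref{L2.4} applied termwise, since each term of $\mathcal{R}_{i}$ involves the non-trivial additive character $\psi_{h_{i}h''}$. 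The same identity holds with $(1-1/p_{i})$ in place of $(1-q^{-\deg h_{i}})$ for $N_{r,k,g}(h,dp_{i},H)$ (now using the factor $\phi(d)/d$), and with $(1-q^{-\deg H_{i}})$ for $N_{r,k,g}(h,d,HH_{i})$.

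Now I rearrange the inequality of Lemma~\ref{L4.3} by absorbing the $(m+l+m'-1)$ copies of $N_{r,k,g}(h,d,H)$ into the $m+l+m'$ sums:
$$N_{r,k,g}(x^{n}-1,R,G)\ \ge\ \sum_{i=1}^{m}\big[N_{r,k,g}(hh_{i},d,H)-N_{r,k,g}(h,d,H)\big]+\cdots+N_{r,k,g}(h,d,H),$$
the dots standing for the analogous differences over the $p_{i}$ and the $H_{i}$. Inserting the three identities of the previous paragraph, each bracketed difference equals minus a reciprocal times $N_{r,k,g}(h,d,H)$ plus its remainder; the reciprocals, together with the trailing $+N_{r,k,g}(h,d,H)$, assemble the coefficient $1-\sum q^{-\deg h_{i}}-\sum 1/p_{i}-\sum q^{-\deg H_{i}}=\mathcal{D}$, while the remainders sum in modulus to at most $\mathcal{E}(h,d,H)\big[(m+l+m')-(1-\mathcal{D})\big]=\mathcal{E}(h,d,H)\big[(m+l+m'-1)+\mathcal{D}\big]$. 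Combining this with $N_{r,k,g}(h,d,H)\ge\theta(h,d,H)(q^{n}-q^{k})-\mathcal{E}(h,d,H)+2q^{n/2+k}\theta(h,d,H)$ and with $\mathcal{D}\mathcal{S}=(m+l+m'-1)+2\mathcal{D}$ yields
\begin{align*}
N_{r,k,g}(x^{n}-1,R,G)&\ge\mathcal{D}\,N_{r,k,g}(h,d,H)-\mathcal{E}(h,d,H)\big[(m+l+m'-1)+\mathcal{D}\big]\\
&\ge\mathcal{D}\,\theta(h,d,H)\big(q^{n}-q^{k}+2q^{n/2+k}-E\,\mathcal{S}\big)\ \ge\ \mathcal{D}\,\theta(h,d,H)\big(q^{n}-E\,\mathcal{S}\big),
\end{align*}
the last step because $2q^{n/2+k}\ge q^{k}$. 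Since $\mathcal{D}>0$ and $\theta(h,d,H)>0$, the right-hand side is positive as soon as $q^{n}>E\,\mathcal{S}$, and since $E\le 2q^{n/2+\vartheta}r\,\mathrm{rad}(r)W(h)W(d)W(H)$ this is guaranteed by Inequality~\eqref{Eq12}.

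The whole argument is routine Weil-bound bookkeeping; the one place that needs care is the bound $|\mathcal{R}_{i}|\le(1-q^{-\deg h_{i}})\mathcal{E}(h,d,H)$ — one must count only the genuinely new divisors $h_{i}h''$ and not pick up a spurious factor $2$ from $W(hh_{i})=2W(h)$. It is exactly this tightness that makes the coefficients telescope to $\mathcal{D}$ and delivers the sharp sieving constant $\mathcal{S}=\tfrac{m+l+m'-1}{\mathcal{D}}+2$; the $-1$ inside the bound on $\mathcal{R}$ is what lets the lower-order term $q^{k}$ be absorbed for free.
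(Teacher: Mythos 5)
Your argument is correct and is exactly the standard Cohen--Huczynska sieving computation (splitting each $N_{r,k,g}(hh_i,d,H)$, $N_{r,k,g}(h,dp_i,H)$, $N_{r,k,g}(h,d,HH_i)$ into $(1-q^{-\deg h_i})N_{r,k,g}(h,d,H)$, etc., plus a remainder bounded via the estimates from the proof of Theorem \ref{T4.1}, then telescoping to $\mathcal{D}$ and $\mathcal{S}$), which is precisely the approach the paper invokes by citing the analogous Lemmas 5.1 and 5.2 of \cite{Rani2} and omitting the details. No gaps; the bookkeeping, including the absorption of the $q^k$ term and the passage from $q^kX\le r\,\mathrm{rad}(r)q^{\vartheta}$ to Inequality \eqref{Eq12}, checks out.
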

\section{Existence of a primitive $1$-normal pair $(\alpha,\alpha^{-1})$}\label{Sec5}
	In \cite{CoHuc}, Cohen and Huczynska proved the existence of a pair $(\alpha,\alpha^{-1})$ of primitive normal elements in $\mathbb{F}_{q^n}$ over $\mathbb{F}_q$. Here, we present an application of the sufficient condition and the sieving inequality, obtained in the previous section, by showing the existence of a pair $(\alpha,\alpha^{-1})$ of primitive $1$-normal elements in $\mathbb{F}_{q^n}$ over $\mathbb{F}_q$. All the non-trivial calculations wherever needed in this section are done using SageMath \cite{Sage}. 

	From Inequality \eqref{Eq10}, there exist pairs $(\alpha,\alpha^{-1})$ of primitive $1$-normal elements in $\mathbb{F}_{q^n}$ over $\mathbb{F}_q$, if
	\begin{equation}\label{Eq13}
		q^{n/2-\vartheta}>2W(x^n-1)W(R)W(G),
	\end{equation}
	where $\vartheta=2$, if $\mathrm{gcd}(q,n)=1$, otherwise, $\vartheta=3.$ Notice that, if a pair $(q,n)$ satisfies Inequality \eqref{Eq13} in the case of $\vartheta=3$, then it satisfies the same in the case of $\vartheta=2$ as well. Moreover, Inequality \eqref{Eq13} can never hold for $n\leq 4$,  if $\vartheta=2$, and for $n\leq 6$, if $\vartheta=3.$  Therefore, first we assume that $\vartheta=3$ and settle the cases $n\geq 14$ in Subsection \ref{Sub5.1}, and then discuss the cases $5\leq n\leq 13$ in Subsection \ref{Sub5.2}. 
    \subsection{The cases $n\geq14$}\label{Sub5.1}
    From Lemmas \ref{L4.1} and \ref{L4.2}, we have $W(R)\leq W({q^n-1})<\C q^{n/\nu}$, $W(x^n-1)\leq 2^n$ and $W(G)\leq 2^{n-1}$, and hence, Inequality \eqref{Eq13} holds, if 
    \begin{equation}\label{Eq14}
    q^{n/2-3}>\C\hspace{.5mm} q^{n/\nu}2^{2n}.
    \end{equation}First, we study the cases $q>3$ and $n\geq14$ in the following Lemma and then the cases $q=2,3$ and $n\geq14$ in Lemma \ref{L5.3}.
	\begin{lemma}\label{L5.1}
		Let $q>3$ be a prime power and $n\geq 14$ be a positive integer. Then, there always exists a pair $(\alpha,\alpha^{-1})$ of primitive $1$-normal elements in $\mathbb{F}_{q^n}$ over $\mathbb{F}_q$.
\end{lemma}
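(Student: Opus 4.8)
The plan is to start from Inequality \eqref{Eq14} and extract an explicit numerical threshold, then mop up the finitely many surviving pairs $(q,n)$ by sharper estimates and the sieve of Lemma \ref{L4.4}. First I would fix a convenient value of $\nu$ — something like $\nu = 8$ or a slightly larger value — and invoke Lemma \ref{L4.2} to bound $\C$ by an absolute constant; then \eqref{Eq14} becomes $q^{n/2 - 3 - n/\nu} > \C\, 2^{2n}$, i.e. taking logarithms, $\left(\tfrac12 - \tfrac1\nu\right)\log q > \tfrac{3\log q + \log\C + 2n\log 2}{n}$, which for $n \ge 14$ and $q \ge 4$ is satisfied for all sufficiently large $q$. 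Solving this gives a bound of the form $q \ge q_0$ for some explicit $q_0$; the right-hand side is decreasing in $n$ for $n \ge 14$ (this is exactly the monotonicity already used in the proof of Proposition \ref{P4.1}), so it suffices to check $n = 14$. This leaves a finite list of pairs $(q,n)$ with $4 \le q < q_0$ and $n$ in a bounded range (for each such $q$, $n$ between $14$ and some $n_q$ determined as in Proposition \ref{P4.1}).

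Next I would reduce that finite list further using the refined estimate $W(x^n-1) \le 2^{(n + \gcd(n,q-1))/2}$ from Lemma \ref{L4.1} — in particular $W(x^n-1) \le 2^{3n/4}$ when $n \nmid q-1$, which is the typical case — together with $W(G) \le W(x^n-1)$ and the exact value of $W(q^n-1)$ computed by factoring $q^n - 1$ in SageMath for each remaining pair. For most of the surviving pairs, plugging these sharper values directly into \eqref{Eq13} already gives $q^{n/2-3} > 2\,W(x^n-1)W(R)W(G)$, eliminating them outright. For the stubborn residue, I would apply the sieving inequality \eqref{Eq12} of Lemma \ref{L4.4}: choose $h$ a product of a few small-degree irreducible factors of $x^n-1$, $d$ a product of a few small primes dividing $R$, and $H$ similarly, arrange that $\D > 0$, and verify that $q^{n/2-3} > 2\,W(h)W(d)W(H)\mathcal{S}$. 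The quantities $\D$ and $\mathcal{S}$ are computed explicitly in SageMath; by absorbing the largest prime factors of $R$ and the largest-degree irreducible factors of $x^n-1$ into $h$, $d$, $H$ (so that they are counted once, in the $W(\cdot)$ factors, rather than repeatedly), one makes $\mathcal{S}$ small enough to win.

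The main obstacle I anticipate is not any single hard step but the bookkeeping: the threshold $q_0$ coming from \eqref{Eq14} with a crude $\nu$ may be uncomfortably large, producing a long list of pairs to check, and for the borderline pairs one must choose the sieving parameters $(h,d,H)$ judiciously — a naive choice can leave $\D \le 0$ or $\mathcal{S}$ too large. I expect that a handful of pairs with small $q$ (e.g. $q \in \{4,5,7,8,9\}$) and $n$ close to $14$ will require an individually tailored sieve, possibly iterating the inequality of Lemma \ref{L4.3} more than once, or occasionally a direct computational search in $\mathbb{F}_{q^n}$ as a last resort; but since $q > 3$ keeps $\gcd(q,n)$ from forcing $\vartheta = 3$ in too many cases and keeps $W(q^n-1)$ relatively tame, I expect every pair with $n \ge 14$ to be dispatched this way, with no genuine exceptions.
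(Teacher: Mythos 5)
Your proposal is correct and follows essentially the same route as the paper: fix a suitable $\nu$ in Inequality \eqref{Eq14} to get an explicit threshold $q_0$ valid for all $n\geq 14$, reduce the remaining $q<q_0$ to finitely many pairs via bounds $n_q$ (with sharper Lenstra--Schoof-type estimates on $W(x^n-1)$ for small $q$), test \eqref{Eq13} with exact values, apply the sieve of Lemma \ref{L4.4}, and finish the few stubborn pairs by direct computer search. The paper does exactly this (with $\nu=7.6$, $q_0=214183$, and a final direct search for eight pairs such as $(8,14)$ and $(4,15)$), so your anticipated "last resort" step is indeed needed but causes no exceptions.
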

\begin{proof}
    For $n\geq 14$, if we choose $\nu=7.6$,  Inequality \eqref{Eq14} holds for $q\geq214183$. Now, for $q<214183$,  we rewrite Inequality \eqref{Eq14} as follows
	\begin{equation}\label{Eq15}
	n>\frac{\mathrm{log}\hspace{.5mm}(\C)+3\hspace{.5mm}\mathrm{log}\hspace{.5mm}(q)}{(1/{2}-1/{\nu})\mathrm{log}\hspace{.5mm}q-2\hspace{.5mm}\mathrm{log}\hspace{.5mm}2},
	\end{equation}
	provided $\nu>\frac{2\hspace{.5mm}\mathrm{log}\hspace{.5mm}q}{\mathrm{log}\hspace{.5mm}q-4\hspace{.5mm}\mathrm{log}\hspace{.5mm}2}$ and $q>16.$ Notice that, the value of $\nu$ is large for small values of $q$, and it is difficult to calculate $\C$ in our computer. So, for simplicity, we assume $37\leq q<214183,$ and for each such $q$, we find a natural number $n_q$ (see Table \ref{Table1}) such that Inequality \eqref{Eq15} holds for $n\geq n_q$.
	\begin{table}[t!]
		\caption{Values of $n_q$ for $37\leq q<214183$ using $W(x^n-1)\leq 2^n$.\label{Table1}}
		\centering
		{\scriptsize
			\begin{tabular}{lll|lll|lll}
				$\nu$&$q$&$n_q$&$\nu$&$q$&$n_q$&$\nu$&$q$&$n_q$ \\
				\hline
				7.6&$\geq214183$&14&7.8&307, 311, 313, 317&34&8.3& 107&62\\
				7.5&61747 to 214182&15&7.8&277, 281, 283, 289, 293&35&8.3& 103&64\\
				7.5&24428 to 61746&16&7.8&257, 263, 269, 271&36&8.3& 101&65\\
				7.5& 11926 to 24427&17&7.8&243, 251, 256&37&8.4& 97&67\\
				7.5& 6738 to 11925&18&7.8&229, 233, 239, 241&38&8.5& 89&73\\
				7.5&4231 to 6737&19&7.9&223, 227&39&8.5& 83&79\\
				7.5&2875 to 4230&20&7.9&211&40&8.6& 81&81\\
				7.5&2074 to 2874&21&7.9&197, 199&41&8.6& 79&83\\
				7.5&1569 to 2073&22&7.9&191, 193&42&8.7& 73&92\\
				7.5&1233 to 1568&23&7.9&179, 181&43&8.8& 71&96\\
				7.5&999 to 1232&24&7.9&173&44&8.8&67&104\\
				7.5&829 to 998&25&8&167, 169&45&8.9&64&111\\
				7.5&703 to 828&26&8& 157, 163&47&9& 61&120\\
				7.6&606 to 702&27&8& 149, 151&48&9.1&59&128\\
				7.6&531 to 605&28&8.1& 137, 139&51&9.3& 53&157\\
				7.6&471 to 530&29&8.1& 128, 131&53&9.6& 49&188\\
				7.6& 422 to 470&30&8.1& 125, 127&54&9.7& 47&210\\
				7.6&382 to 421&31&8.1& 121&56&10.1&43&272\\
				7.7&348 to 381&32&8.2& 113&59&10.3& 41&320\\
				7.7&331, 337, 343, 347&33&8.3& 109&61&10.8& 37&489\\
				\hline				
		\end{tabular}}
	\end{table}

	Now, for $3< q<37$, we use the bound $W(x^n-1)\leq 2^{n/3+2(q^2-1)/3}$, which we obtain from \cite[Lemma 2.9]{Lenstra}, and find that, Inequality \eqref{Eq14} holds, if 
\begin{equation}\label{Eq16}
n>\frac{\mathrm{log}\hspace{.5mm}(2^{4(q^2-1)/3}\C)}{(1/{2}-1/{\nu})\mathrm{log}\hspace{.5mm}q-(2\hspace{.5mm}\mathrm{log}\hspace{.5mm}2)/3},
\end{equation}
provided $\nu>\frac{6\hspace{.5mm}\mathrm{log}\hspace{.5mm}q}{3\hspace{.5mm}\mathrm{log}\hspace{.5mm}q-4\hspace{.5mm}\mathrm{log}\hspace{.5mm}2}$ and $q>2$. For these values of $q$, we obtain a natural number $n_q$ (see Table \ref{Table2}) such that Inequality \eqref{Eq16} holds for $n\geq n_q$.

Now, for $n<n_q$ and $q$'s listed in Tables \ref{Table1} and \ref{Table2}, we test Inequality \eqref{Eq13} with $\vartheta=3$, and list the values of $q$ and $n$ in Table \ref{Table3}, which fail to satisfy this inequality.
\begin{table}[h!]
	\caption{Values of $n_q$ for $3<q<37$ using $W(x^n-1)\leq 2^{n/3+2(q^2-1)/3}$.\label{Table2}}
	\centering
	{\scriptsize
		\begin{tabular}{|l|llllllll|}
			\hline
			$\nu$&11.8&11.7&11.6&11.4&11.2&11&10.6&10.3\\
			$q$&32&31&29&27&25&23&19&17\\
			$n_q$&1019&973&883&797&714&636&493&428\\
			\hline	
			\hline
			$\nu$&10.1&9.7&9.4&9&8.8&8.7&8.5&9\\
			$q$&16&13&11&9&8&7&5&4\\
			$n_q$&397&312&263&222&206&195&214&351\\
			\hline
	\end{tabular}}
\end{table}
\begin{table}[t!]
	\caption{ Pairs $(q,n)$ fail to satisfy Inequality \eqref{Eq13} with $\vartheta=3$.\label{Table3}}
	\centering
	{\scriptsize
		\begin{tabular}{p{.5cm}|p{12cm}}
			$n$&$q$\\
			\hline			
			14&4, 5, 8, 9, 11, 13, 23, 25, 27, 29, 41, 43, 64, 71, 113, 125, 127, 169, 197, 211, 239, 281, 337, 379, 421, 449, 463, 491, 547, 617, 631, 659, 673, 701, 729, 743, 883, 911, 953\\
			15&4, 7, 8, 11, 13, 16, 19, 29, 31, 41, 49, 61, 64, 71, 121, 151, 181, 211, 241, 256, 271, 331, 361, 421, 571, 631, 751, 841\\
			16&5, 7, 9, 11, 13, 17, 19, 23, 25, 27, 29, 31, 41, 49, 73, 81, 89, 97, 113, 193, 241, 257, 289, 337, 401\\
			17&4, 16, 103, 256\\
			18&4, 5, 7, 13, 17, 19, 25, 31, 37, 43, 73, 109, 127, 163, 181, 199, 289, 361\\
			20&7, 9, 11, 13, 19, 29, 31, 41, 61, 81, 101, 121\\
			21&4, 8, 13, 16, 43, 64, 169\\
			22&23, 67, 89\\
			23&47\\
			24&5, 7, 11, 13, 17, 19, 25, 37, 49, 73, 97, 121\\
			26&27, 53\\
			28&13, 29\\
			30&4, 7, 11, 19, 31, 61\\
			31&32\\
			32&17\\
			36&5, 19, 37\\
			40&9, 11, 41\\
			42&43\\
			45&4\\
			48&5, 7\\
			63&5\\
			\hline
	\end{tabular}}
\end{table}

Now, for the values of $q$ and $n$, listed in Table \ref{Table3}, we test the SageMath procedure {\small \texttt{TEST_SIEVE(q,n,$\vartheta$)}} with $\vartheta=3$, in which we choose the suitable values of $h$, $d$ and $H$, and test Inequality \eqref{Eq12}  (whose pseudocode is given in \ref{A1}), which returns \texttt{True} for all the pairs $(q,\ n)$ except (4, 15), (5, 16), (5, 24), (8, 14), (9, 16), (16, 15), (17, 16), (19, 18). For these remaining pairs of $(q,\ n)$, we explicitly search for a pair $(\alpha,\alpha^{-1})$ of primitive $1$-normal elements in $\mathbb{F}_{q^n}$ over $\mathbb{F}_q$ using SageMath procedure {\small \texttt{DIRECT_SEARCH(q,n)}} (pseudocode is given in \ref{A1}), and find that such a pair always exists.
\end{proof}
	
	Now, for the cases $q=2,\ 3$, first we recall some results. Let $q$ be a prime power and $n=q^i\cdot n'$ be a positive integer with $\mathrm{gcd}(n',q)=1$; $i\geq0$. Further, let $e$ be the multiplicative order of $q$ modulo $n'$, then $x^{n'}-1$ factors into irreducible polynomials of degree $\leq e.$ Let $\rho(q,n')$ be the ratio of the number of irreducible factors of $x^{n'}-1$ of degree $<e$ to $n'$, then $n\rho(q,n)=n'\rho(q,n')$, and we have the following lemma.
	\begin{lemma}\label{L5.2}{\upshape{\cite[Lemma 7.1]{CoHuc}}}
		Assume that $n>4$ $(p\nmid n)$, then the following holds:
		\begin{enumerate}[$(i)$]
			\item Suppose $q=2$, then  $\rho(2,5)=1/5; \ \rho(2,9)=2/9; \ \rho(2,21)=4/21;$ otherwise $\rho(2,n)\leq1/6.$
			\item Suppose $q=3$, then  $\rho(3,16)=5/16;$ otherwise $\rho(3,n)\leq1/4.$ 
		\end{enumerate}
	\end{lemma}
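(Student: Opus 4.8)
The argument is that of \cite[Lemma 7.1]{CoHuc}, and the plan is as follows. Since $p\nmid n'$, the roots of $x^{n'}-1$ over $\mathbb{F}_q$ are precisely the $n'$-th roots of unity in $\overline{\mathbb{F}}_q$; because $e=\mathrm{ord}_{n'}(q)$ we have $n'\mid q^e-1$, so all of them lie in $\mathbb{F}_{q^e}$, and the degree of the irreducible factor through a root $\zeta$ is $[\mathbb{F}_q(\zeta):\mathbb{F}_q]$, a divisor of $e$. Hence $n'\rho(q,n')$ equals the number of $q$-cyclotomic cosets modulo $n'$ of length $<e$, i.e.\ the number of irreducible factors all of whose roots sit in a proper subfield $\mathbb{F}_{q^{e/\ell}}$ with $\ell\mid e$ prime. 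As $\mathbb{F}_{q^{e/\ell}}$ contains exactly $\mathrm{gcd}(n',q^{e/\ell}-1)$ $n'$-th roots of unity, a union bound over the primes $\ell\mid e$ gives
$$
n'\,\rho(q,n')\ \le\ \sum_{\substack{\ell\mid e\\ \ell\ \mathrm{prime}}}\mathrm{gcd}\bigl(n',\,q^{e/\ell}-1\bigr)\ \le\ \sum_{\substack{\ell\mid e\\ \ell\ \mathrm{prime}}}\bigl(q^{e/\ell}-1\bigr).
$$

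Next I would split on whether $n'$ has a prime factor $p$ with $\mathrm{ord}_p(q)=e$. If so, let $p^a$ be the exact power of $p$ in $n'$ and $m=n'/p^a$; then every divisor of $n'$ divisible by $p$ has $q$-order equal to $e$, so the short cosets come only from divisors of $m$, and, using that for $q\in\{2,3\}$ a divisor $d>1$ of $m$ not dividing $q-1$ has $q$-order $\ge 2$, one bounds $n'\rho(q,n')$ by roughly $m/2\le n'/(2p)$. Since for $q=2$ (resp.\ $q=3$) and $n'>4$ the relevant prime $p$ is at least $5$, this puts $\rho(q,n')$ safely below $1/6$ (resp.\ $1/4$) apart from a few small $n'$ handled by hand. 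If $n'$ has no prime factor of full $q$-order, then $n'$ is strongly constrained: it is, up to bounded factors, assembled from prime powers whose $q$-order strictly divides $e$ --- the archetypes being prime powers at which the order jumps (e.g.\ $n'=9$ over $\mathbb{F}_2$, $n'=16$ over $\mathbb{F}_3$) and products of coprime parts of smaller order (e.g.\ $n'=21=3\cdot 7$ over $\mathbb{F}_2$) --- and only finitely many such $n'$ exceed the claimed bound.

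Finally I would carry out the finite verification: for the bounded range of $n'$ left over --- which, $e$ being small there, amounts to $n'\mid q^e-1$ for the finitely many small values of $e$ --- I would compute the factorisation type of $x^{n'}-1$ directly from the $q$-cyclotomic cosets modulo $n'$ and read off $\rho(q,n')$. This exhibits $\rho(2,5)=1/5$, $\rho(2,9)=2/9$, $\rho(2,21)=4/21$ and $\rho(3,16)=5/16$ as the only values exceeding $1/6$, resp.\ $1/4$, and $\rho(2,n')\le 1/6$, $\rho(3,n')\le 1/4$ in all remaining cases, as claimed.

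The main obstacle is the second case. When $n'$ carries no prime of full $q$-order, neither the subfield union bound nor the divisor-counting estimate is conclusive, and one must argue ad hoc --- effectively tracking how $\mathrm{ord}_{p^a}(q)$ behaves under prime-power powers and under coprime products --- to show the short cyclotomic cosets cannot dominate except for the listed $n'$. It is precisely this borderline analysis that produces the exceptional list, and keeping it honest (verifying that the case split is exhaustive and the residual search finite) is where the real care is needed.
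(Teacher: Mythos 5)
The first thing to note is that the paper itself contains no proof of this statement: it is quoted verbatim from \cite[Lemma 7.1]{CoHuc}, so there is no internal argument to compare your reconstruction against, and the authoritative proof is the one in Cohen--Huczynska.

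Judged on its own terms, your outline is correct in its easy parts but has a genuine gap exactly where the content of the lemma lies. The reduction to counting $q$-cyclotomic cosets modulo $n'$ of length $<e$, the subfield union bound $n'\rho(q,n')\le\sum_{\ell\mid e,\ \ell\ \mathrm{prime}}\gcd(n',q^{e/\ell}-1)$, and the first case (a prime $p_0\mid n'$ with $\mathrm{ord}_{p_0}(q)=e$, so that short cosets arise only from divisors of $m=n'/p_0^{a}$ and one gets $\rho(q,n')$ of size roughly $1/(2p_0)$ with $p_0\ge 5$) are all fine. But in the complementary case, where no prime factor of $n'$ has full $q$-order, you assert that only finitely many $n'$ can exceed $1/6$ (resp.\ $1/4$) and that the residual verification is finite because ``$e$ is small there'', while conceding in your final paragraph that neither the union bound nor the divisor-counting estimate is conclusive and that an ad hoc argument is needed. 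Nothing in the sketch actually bounds $e$ or $n'$ in this case, so the exhaustiveness of the case split and the finiteness of the leftover search --- precisely the analysis that produces the exceptional values $n'=5,9,21$ for $q=2$ and $n'=16$ for $q=3$ --- is the part that remains unproven. As written, the proposal is a plausible plan in the spirit of Cohen--Huczynska's argument, not a proof; to complete it you would either have to carry out that borderline case analysis or simply do as the paper does and invoke \cite[Lemma 7.1]{CoHuc}.
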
 
If we choose $h$ as the product of all irreducible factors of $x^{n'}-1$ of degree $<e$, $H=h/(x-1)$ and $d=R$ in Lemma \ref{L4.4}, then following Lemma 10 of \cite{Rani1}, we get that, $\mathcal{D}>1-2/e>0$ and $\mathcal{S}\leq 2n'\leq2n$, provided $e>2$ (i.e. $n'\nmid q^2-1$).  Using these results, we prove the following lemma.
\begin{lemma}\label{L5.3}
	Let $q=2, 3$ and $n\geq 14$ be a positive integer. Then, there always exists a pair $(\alpha,\alpha^{-1})$ of primitive $1$-normal elements in $\mathbb{F}_{q^n}$ over $\mathbb{F}_q$. 
\end{lemma}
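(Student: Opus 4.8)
The plan is to handle $q=2$ and $q=3$ separately, using the sieving inequality \eqref{Eq12} from Lemma \ref{L4.4} with the specific choice of $h$, $H$, $d$ indicated in the paragraph preceding the lemma, namely $h$ equal to the product of all irreducible factors of $x^{n'}-1$ of degree $<e$, $H=h/(x-1)$, and $d=R$. With this choice we have already recorded that $\mathcal{D}>1-2/e>0$ and $\mathcal{S}\le 2n'\le 2n$ whenever $e>2$, i.e. whenever $n'\nmid q^2-1$. Since $\vartheta=3$ here (as $\gcd(q,n)$ may exceed $1$ when $p\mid n$; and even in the coprime case taking $\vartheta=3$ only weakens the bound), I would substitute these estimates into \eqref{Eq12}: it suffices to show
\begin{equation*}
q^{n/2-3}>2\,\mathrm{rad}(1)\,W(h)W(R)W(h/(x-1))\cdot 2n,
\end{equation*}
noting $r=1$ so $\mathrm{rad}(r)=1$. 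Using $W(h)\le 2^{n'\rho(q,n')}=2^{n\rho(q,n)}$, $W(h/(x-1))\le W(h)$, and $W(R)\le W(q^n-1)\le \mathcal{C}_\nu\,q^{n/\nu}$ from Lemmas \ref{L4.1} and \ref{L4.2}, the inequality reduces to
\begin{equation*}
q^{n/2-3}>4n\,\mathcal{C}_\nu\,q^{n/\nu}\,2^{2n\rho(q,n)}.
\end{equation*}

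For $q=3$: by Lemma \ref{L5.2}(ii), $\rho(3,n)\le 1/4$ for all $n>4$ with $3\nmid n$ except $n'=16$; so generically $2^{2n\rho}\le 2^{n/2}=3^{(n/2)\log 2/\log 3}$, and the surviving inequality becomes $3^{n(1/2-1/\nu-\tfrac{\log 2}{2\log 3})-3}>4n\,\mathcal{C}_\nu$. Since $\tfrac{\log 2}{2\log 3}\approx 0.315<1/2$, for a suitably large $\nu$ the exponent of $3$ is linear in $n$ with positive slope, so this holds for all $n\ge n_0$ for some explicit $n_0$; the finitely many remaining $n$ (and those with $3\mid n$, handled by the coarser $W(x^n-1)\le 2^{3n/4+c}$ bound of Lemma \ref{L4.1}, or $n'=16$) are checked directly via the SageMath routines \texttt{TEST\_SIEVE} and \texttt{DIRECT\_SEARCH} already introduced. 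For $q=2$: by Lemma \ref{L5.2}(i), $\rho(2,n)\le 1/6$ except for $n'\in\{5,9,21\}$; then $2^{2n\rho}\le 2^{n/3}$ and the inequality becomes $2^{n(1/2-1/3-1/\nu)-3}>4n\,\mathcal{C}_\nu$, whose left side again grows since $1/2-1/3=1/6>0$, so it holds for $n\ge n_1$; the exceptional residues $n'\in\{5,9,21\}$, the case $2\mid n$ (where $e>2$ still needs checking, else fall back on Lemma \ref{L4.1}), and small $n<n_1$ are disposed of by direct computation.

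The main obstacle is that the generic sieving bound only applies when $e>2$, i.e. $n'\nmid q^2-1$ — equivalently $n'\nmid 3$ for $q=2$ and $n'\nmid 8$ for $q=3$; the low-order cases $n'\in\{1,3\}$ ($q=2$) or $n'\in\{1,2,4,8\}$ ($q=3$), together with the anomalous residues from Lemma \ref{L5.2} and the values $n<n_0,n_1$, form a finite list that must be settled by hand. For these I would first retry the sieve with an ad hoc choice of $h,d,H$ (as in the proof of Lemma \ref{L5.1}, where most residual pairs cleared \texttt{TEST\_SIEVE} with $\vartheta=3$), and for the stubborn remainder perform an exhaustive search for a primitive $1$-normal $\alpha$ with $\alpha^{-1}$ also primitive $1$-normal using \texttt{DIRECT\_SEARCH(q,n)}; since $q\in\{2,3\}$ and $n$ is bounded by a modest constant, the field $\mathbb{F}_{q^n}$ is small enough for this to terminate, and in every such case the pair is found to exist, completing the proof.
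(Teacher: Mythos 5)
Your generic case is set up exactly as in the paper: for $n'\nmid q^2-1$ you take $h$ to be the product of the irreducible factors of $x^{n'}-1$ of degree $<e$, $H=h/(x-1)$, $d=R$, use $\mathcal{D}>1-2/e$, $\mathcal{S}\le 2n$, $W(h)=2^{n'\rho(q,n')}$ and Lemmas \ref{L4.1}, \ref{L4.2}, and the resulting exponential inequality does hold for all $n$ past an explicit threshold, with the survivors passed to \texttt{TEST\_SIEVE} and \texttt{DIRECT\_SEARCH}. That part is sound.

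The genuine gap is your treatment of the excluded case $e\le 2$, i.e. $n'\mid q^2-1$. You assert that the cases $n'\in\{1,3\}$ for $q=2$ and $n'\in\{1,2,4,8\}$ for $q=3$ (together with the anomalous residues of Lemma \ref{L5.2}) ``form a finite list that must be settled by hand,'' and you justify termination of \texttt{DIRECT\_SEARCH} by saying $n$ is bounded by a modest constant. This is false: $n=q^i n'$ with $n'$ fixed and $i$ arbitrary gives infinitely many degrees ($n=16,24,32,48,64,\dots$ for $q=2$; $n=18,27,36,54,72,81,\dots$ for $q=3$), so no finite computation can dispose of them, and these $n$ are precisely where $\gcd(q,n)>1$ so the crude bound of Lemma \ref{L4.1} in Inequality \eqref{Eq13} also fails asymptotically (for $q=3$, $3^{n/2}\ll 2^{3n/2}$). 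What is missing is a second, uniform-in-$n$ sieve argument for this family: since $x^{n'}-1$ then splits into linear and quadratic factors with $n'\le 8$, the paper takes $h$ to be the product of the linear factors of $x^{n'}-1$, $H=h/(x-1)$, $d=R$, which gives absolute constants $\mathcal{D}\ge 1/2$, $\mathcal{S}\le 4$ for $q=2$ and $\mathcal{D}\ge 1/3$, $\mathcal{S}\le 17$ for $q=3$, so that Inequality \eqref{Eq17} (with $\nu=6$) holds for every $n\ge 33$; only the finitely many $14\le n<33$ in this family then go to the computational routines. Your remark about ``retrying the sieve with an ad hoc choice of $h,d,H$'' points in this direction, but without exhibiting a choice whose $W(h)W(H)\mathcal{S}$ is bounded independently of $n$, the infinite families $n=q^in'$ remain uncovered, and the proof as proposed does not close.
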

\begin{proof}
	Let $n=q^i\cdot n'$ with $\mathrm{gcd}(q,n')=1$ and $i\geq 0.$ First, assume that $n'\nmid q^2-1,$ then $e>2$ and $n'>4$. If we choose $d=R$, $h$ as the product of all irreducible factors of $x^{n'}-1$ of degree $<e$, and $H=h/(x-1)$, then from the above discussion and Inequality \eqref{Eq12}, there exists a pair $(\alpha,\alpha^{-1})$ of primitive $1$-normal elements in $\mathbb{F}_{q^n}$ over $\mathbb{F}_q$, if 
	\begin{equation}\label{Eq17}
	q^{n/2-3}>2\C q^{n/\nu}W(h)W(H)\mathcal{S}.
	\end{equation}
	Notice that, $W(H)=W(h)/2$ and $W(h)=2^{n'\rho(q,n')}$. From Lemma \ref{L5.2} $n'\rho(q,n')\leq n'/6\leq n/6,$ when $q=2$, and $n'\rho(q,n')\leq n'/4\leq n/4,$ when $q=3$. Thus, the above inequality holds, if
	\begin{equation*}
	\left\{\begin{array}{ll}
	q^{n/2-3}>\C q^{n/\nu} 2^{n/3}2n,& \text{ for } q=2,\\
	q^{n/2-3}>\C q^{n/\nu} 2^{n/2}2n,& \text{ for } q=3.
	\end{array}\right. 
	\end{equation*}
	The above inequalities hold for $n\geq557$, when $q=2$ and $\nu=8.7$, and for $n\geq 265$, when $q=3$ and $\nu=8.3.$  For the remaining $q$ and $n$, we test Inequality \eqref{Eq13}, and find that this fails for $n=$ 14, 15, 17, 18, 20, 21, 22, 27, 28, 30, 31, 33, 35, 42, 45, 63, when $q=2$, and for $n=$ 14, 15, 16, 20, 22, 26, 28, 32, 40, when $q=3.$
	
	Now, assume that $n'\mid q^2-1$, and select $d=R$, $h$ as the product of all the linear factors of $x^{n'}-1$, and $H=h/(x-1)$ in Lemma \ref{L4.4}. Then, for $q=2$, there exists exactly one linear and at most 1 quadratic factor of $x^{n'}-1$ over $\mathbb{F}_2$. Therefore, in this case,  $\mathcal{D}\geq1/2$ and $\mathcal{S}\leq 4$. Similarly, for $q=3$, there exists at most 2 linear and at most 2 quadratic factors of $x^{n'}-1$ over $\mathbb{F}_3$. Thus, in this case, $\mathcal{D}\geq1/3$ and $\mathcal{S}\leq 17$. If we take $\nu=6$, then Inequality \eqref{Eq17} holds for $n\geq 33$ in both the cases. For $14\leq n<33$, we test Inequality \eqref{Eq13}, which does not hold for $n=$ 16, when $q=2$, and for $n=$ 18, 24, when $q=3.$
	
	Finally, for $q=2, 3$ and the remaining values of $n$, we apply the SageMath procedure {\small \texttt{TEST_SIEVE(q,n,$\vartheta$)}} with $\vartheta=3$, which returns \texttt{False} only for $q=2$ and $n=$ 14, 15, 16, 18, 20, 21, 24, 30, and for $q=3$ and $n=$ 14, 16. For these remaining $q$ and $n$, we explicitly search for a pair $(\alpha,\alpha^{-1})$ of primitive $1$-normal elements in $\mathbb{F}_{q^n}$ over $\mathbb{F}_q$ using SageMath procedure {\small \texttt{DIRECT_SEARCH(q,n)}}, and find that such a pair always exists.
\end{proof}
\subsection{The cases $5\leq n\leq13$}\label{Sub5.2}
	Observe that, Inequality \eqref{Eq13} is applicable for the cases, $n=5$, 6, only if $\mathrm{gcd}(n,q)=1$. Therefore, first we settle the cases $7\leq n\leq 13$ for all $q\geq 2$, and then the cases $n=5$, 6 with the assumption that $\mathrm{gcd}(q,n)=1.$ 
	Before moving ahead, first we state the following lemma, which is a direct consequence of Lemma \ref{L4.4}.
\begin{lemma}\label{L5.4}
	Let $d$ be a divisor of $q^n-1$ and $n_0$ be a fixed positive integer such that all the prime divisors of $(q^n-1)/d$, co-prime to $d$, are of the form $n_0s+1$ for some positive integer $s$. Further, let $p_1, p_2,\ldots,p_l$ be the first $l$ primes of the form $n_0s+1$ such that the product $P_l=\prod_{i=1}^{l}p_i\leq(q^n-1)/d$, and denote the sum of their reciprocals by $S_l=\sum_{i=1}^{l}{1}/{p_i}$. If  $h=1$ and $H=1$ are the divisors of $x^n-1$ and $(x^n-1)/(x-1)$, respectively,  then $\D\geq1-S_l-(2n-1)/q$ and $\mathcal{S}\leq(l+2n-2)/\D+2.$ Moreover, if $\D>0$ and $q^{n/2-\vartheta}>2W(d)\mathcal{S}$, then there exists a pair $(\alpha,\alpha^{-1})$ of primitive $1$-normal elements in $\mathbb{F}_{q^n}$ over $\mathbb{F}_q$, where $\D$, $\mathcal{S}$ and $\vartheta$ are the same as defined in {\upshape{Lemma \ref{L4.4}}} and {\upshape{Inequality \eqref{Eq13}}}.
\end{lemma}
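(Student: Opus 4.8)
The plan is to derive Lemma~\ref{L5.4} as a specialization of the sieving inequality in Lemma~\ref{L4.4}, where we make the most economical choice of sieving data: take $h=1$ as a divisor of $x^n-1$ and $H=1$ as a divisor of $(x^n-1)/(x-1)$, so that $\{h_1,\ldots,h_m\}$ is the full set of irreducible factors of $x^n-1$ and $\{H_1,\ldots,H_{m'}\}$ is the full set of irreducible factors of $(x^n-1)/(x-1)$. First I would record that when $h=H=1$ the quantity $\D$ from Lemma~\ref{L4.4} becomes
\begin{equation*}
\D = 1 - \sum_{i=1}^{m}\frac{1}{q^{\deg(h_i)}} - S_l - \sum_{i=1}^{m'}\frac{1}{q^{\deg(H_i)}},
\end{equation*}
where $S_l=\sum_{i=1}^l 1/p_i$ collects the prime reciprocals from the $d$-part. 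The two polynomial sums are bounded crudely: since $\sum_i \deg(h_i)=n$ and $\sum_i \deg(H_i)=n-1$, each term $1/q^{\deg(h_i)}$ is at most $1/q$ and there are at most $n$ (resp.\ $n-1$) of them, giving $\sum_i 1/q^{\deg(h_i)} + \sum_i 1/q^{\deg(H_i)} \le (2n-1)/q$. This yields $\D \ge 1 - S_l - (2n-1)/q$, the first claimed bound.

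Next I would handle the count $m+l+m'-1$ appearing in $\mathcal{S} = (m+l+m'-1)/\D + 2$. Here $m \le n$ (number of irreducible factors of $x^n-1$) and $m' \le n-1$, so $m+m'-1 \le 2n-2$, whence $\mathcal{S} \le (l+2n-2)/\D + 2$, the second claimed bound; this uses $\D>0$ so that the fraction is well-defined and the direction of the inequality is preserved. The hypothesis that every prime divisor of $(q^n-1)/d$ coprime to $d$ has the form $n_0 s+1$ is what licenses restricting attention to the first $l$ such primes $p_1,\ldots,p_l$ with $P_l=\prod p_i \le (q^n-1)/d$: by Lemma~\ref{L4.3}/\ref{L4.4} the remaining primes dividing $R$ that we sieve on are exactly these, and there are at most $l$ of them (the constraint $P_l \le (q^n-1)/d$ ensures we have not overcounted the available prime factors). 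Finally, with $h=1$ we have $W(h)=1$ and with $H=1$ we have $W(H)=1$, and since we are in the primitive $1$-normal situation $\operatorname{rad}(r)=r=1$, so $2r\,\operatorname{rad}(r)W(h)W(d)W(H)\mathcal{S}$ in Inequality~\eqref{Eq12} collapses to $2W(d)\mathcal{S}$; thus $\D>0$ together with $q^{n/2-\vartheta} > 2W(d)\mathcal{S}$ forces $N_{r,k,g}(x^n-1,R,G)>0$ by Lemma~\ref{L4.4}, which is precisely the existence of the desired pair $(\alpha,\alpha^{-1})$.

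I do not anticipate a deep obstacle here—the statement is essentially a bookkeeping corollary of Lemma~\ref{L4.4}. The one point requiring a little care is the justification that the primes we sieve against are all of the form $n_0 s + 1$ and that choosing the first $l$ of them subject to $P_l \le (q^n-1)/d$ is both legitimate (they genuinely divide $(q^n-1)/d$) and exhaustive enough that $l$ upper-bounds the number of sieving primes; this is the place where the number-theoretic hypothesis on the shape of the prime divisors is actually consumed, and where one must be careful not to conflate "first $l$ primes of that form" with "the actual prime divisors." Everything else is substitution into Lemma~\ref{L4.4} and the two elementary estimates $\sum 1/q^{\deg} \le (2n-1)/q$ and $m+m' \le 2n-1$.
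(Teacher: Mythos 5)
Your argument is correct and takes exactly the route the paper intends: the paper states Lemma \ref{L5.4} without proof as a ``direct consequence'' of Lemma \ref{L4.4}, and your write-up is precisely that specialization ($h=H=1$, $m\le n$, $m'\le n-1$, $W(h)=W(H)=1$, $r=\mathrm{rad}(r)=1$). The only wording to tighten is that $\mathcal{D}$ ``becomes'' your displayed expression --- it is merely bounded below by it, since the actual sieving primes need not be the first $l$ primes of the form $n_0s+1$ but are at most $l$ in number and termwise at least $p_i$, a point you already flag at the end.
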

\begin{lemma}\label{L5.5}
	Let $7\leq n\leq 13$ and $q$ be a prime power such that $\mathrm{gcd}(q,n)=1$. Then, there always exists a pair $(\alpha,\alpha^{-1})$ of primitive $1$-normal elements in $\mathbb{F}_{q^n}$ over $\mathbb{F}_q$.
\end{lemma}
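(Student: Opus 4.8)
Since the hypothesis is $\gcd(q,n)=1$, we have $\vartheta=2$, and by Corollary~\ref{cor4.1.1} it suffices to verify Inequality~\eqref{Eq13}, namely $q^{\,n/2-2}>2\,W(x^n-1)\,W(R)\,W(G)$, for each of the seven values $n\in\{7,8,\dots,13\}$ (when $n$ is even this already forces $q$ odd). The plan is the standard three-layer argument. First, a crude estimate handles all sufficiently large $q$: fixing $n$ and using $W(x^n-1)\le 2^{(n+\gcd(n,q-1))/2}\le 2^n$, $W(G)\le W\big((x^n-1)/(x-1)\big)\le 2^{n-1}$, and $W(R)\le W(q^n-1)\le\C\, q^{\,n/\nu}$ from Lemmas~\ref{L4.1} and~\ref{L4.2}, substitution into \eqref{Eq13} and taking logarithms yields, for an appropriate $\nu=\nu(n)$, a threshold $q_0(n)$ such that \eqref{Eq13} holds for all prime powers $q\ge q_0(n)$ coprime to $n$; when moreover $n\nmid q-1$ one sharpens this with $W(x^n-1)\le 2^{3n/4}$, lowering $q_0(n)$. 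Testing \eqref{Eq13} directly for the finitely many $q<q_0(n)$ leaves an explicit short list $\mathcal{E}$ of pairs $(q,n)$ to be dealt with by other means.

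Second, the pairs in $\mathcal{E}$ are attacked with the sieving inequality \eqref{Eq12}. The key structural fact is that every prime divisor of $q^n-1$ either divides $q^m-1$ for some proper divisor $m\mid n$ or is a primitive prime divisor and hence $\equiv 1\pmod n$. Thus, choosing $d\mid q^n-1$ so as to absorb all prime factors of $\prod_{m\mid n,\ m<n}(q^m-1)$ together with the smallest few primes $\equiv1\pmod n$, the remaining prime divisors of $(q^n-1)/d$ coprime to $d$ are all of the form $ns+1$; with $h=H=1$ this is precisely the hypothesis of Lemma~\ref{L5.4} with $n_0=n$. One then computes $\mathcal{D}\ge 1-S_l-(2n-1)/q$ and $\mathcal{S}\le(l+2n-2)/\mathcal{D}+2$, checks $\mathcal{D}>0$, and verifies $q^{\,n/2-2}>2\,W(d)\,\mathcal{S}$. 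For pairs where $h=H=1$ is not enough, one falls back to the general Lemma~\ref{L4.4}, taking $h$ (respectively $H$) to be the product of the smallest-degree irreducible factors of $x^n-1$ (respectively of $(x^n-1)/(x-1)$), which reduces $W(h)\,W(H)$ at the cost of enlarging $\mathcal{S}$; this bookkeeping is what the procedure \texttt{TEST\_SIEVE}$(q,n,2)$ carries out.

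Third, the pairs $(q,n)$ surviving both \eqref{Eq13} and the sieve form a very short list, and for each of them the procedure \texttt{DIRECT\_SEARCH}$(q,n)$ exhibits an explicit primitive $1$-normal element of $\mathbb{F}_{q^n}$ whose inverse is again primitive $1$-normal. Combining the three layers proves the lemma.

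I expect the intermediate regime for $n=7$ and $n=8$ to be the main obstacle: there $n/2-2$ equals $1.5$ and $2$, so the left side of \eqref{Eq13} grows very slowly in $q$ while $W(x^n-1)$ can be as large as $2^n$ (when $n\mid q-1$), so \eqref{Eq13} fails over a wide band of $q$ and essentially all the work is pushed onto the sieve. The delicate point is to choose $d$ (and, if needed, $h$ and $H$) so that $\mathcal{D}$ stays positive --- which requires $q$ not too small relative to $n$ --- while keeping $W(d)\,\mathcal{S}$ below $q^{\,n/2-2}/2$; for the stubborn pairs at the bottom of the range the sieve is unavailable and one must rely on the direct search, so the remaining content is simply checking that this search is feasible and always succeeds.
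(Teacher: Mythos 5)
Your proposal matches the paper's proof in both structure and substance: asymptotic bounds from Lemmas \ref{L4.1} and \ref{L4.2} for large $q$, then Lemma \ref{L5.4} with $h=H=1$ and $d$ absorbing the subfield part of $q^n-1$ (so the remaining primes are $\equiv 1 \pmod{n}$) exactly as the paper does for $n=7,8,9$ where the crude threshold $M_n$ is too large to enumerate, and finally \texttt{TEST\_SIEVE}$(q,n,2)$ and \texttt{DIRECT\_SEARCH}$(q,n)$ for the surviving small pairs. Your anticipated difficulty (the wide intermediate band for $n=7,8$) is precisely where the paper deploys the Lemma \ref{L5.4} sieve, so this is essentially the same argument.
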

\begin{proof}
	Since, $\mathrm{gcd}(q,n)=1$, Inequality \eqref{Eq13} holds, if 
	\begin{equation}\label{Eq18}
		q^{n/2-2}>\C q^{n/\nu}2^{2n}.	
	\end{equation} 
	For each $7\leq n\leq 13$ and suitable choices of $\nu$, the above inequality holds for $q\geq M_n$, where the values of $M_n$ are listed in Table \ref{Table4}. 
	\begin{table}[h!]
		\caption{Values of $M_n$ such that Inequality \eqref{Eq18} holds for $q\geq M_n$.}\label{Table4}
		\centering
		{\scriptsize
			\begin{tabular}{|l|lllllll|}
				\hline
				$M_n$&$3.60\times 10^{12}$&$2.10\times 10^{8}$&$2.60\times 10^{6}$&$222280$&$46892$&$16049$&$7331$\\
				\hline
				$n$&7&8&9&10&11&12&13\\
				\hline
				$\nu$&7.8&7.4&7.2&7.1&7.1&7.1&7.1\\
				\hline
		\end{tabular}}
	\end{table} 
Now, for each $10\leq n\leq 13,$ and $q<M_n$, by calculating the exact values of $W(x^n-1)$, $W(R)$ and $W(G)$, and testing Inequality \eqref{Eq13} with $\vartheta=2$, we obtain that, there are $136$ pairs of $(q,n)$, which fail to satisfy this inequality. For these pairs, we use the SageMath procedure \texttt{ TEST_SIEVE(q,n,$\vartheta$)} with $\vartheta=2$, which returns \texttt{False} for the pairs $(3,10)$, $(11,10)$, $(2,11)$, $(3,11)$, $(5,12)$, $(7,12)$, and $(13,12)$. Finally, for these pairs,  we explicitly search for a pair $(\alpha,\alpha^{-1})$ of primitive $1$-normal elements in $\mathbb{F}_{q^n}$ over $\mathbb{F}_q$ using SageMath procedure {\small \texttt{DIRECT_SEARCH(q,n)}}, and find that such a pair always exists.

Now, let $n=7$, then for $q<M_7$, we use Lemma \ref{L5.4} with $d=q-1$, $h=1$, $H=1$ and $\vartheta=2.$ In this case, the primes $p$ dividing $(q^7-1)/d$, co-prime to $d$, are of the form $7s+1$, and $P_l\leq \sum_{i=0}^{6}M_7^{i}$ for $l\leq 29$. Further, if $q\geq 65$, then  $\D>0.6608$ and $\mathcal{S}\leq 64.042$, and there exists a desired pair $(\alpha,\alpha^{-1})$, if $q^{3/2}>2\C q^{1/\nu}\mathcal{S}$, which is true for $q\geq 149$ and $\nu=2.8.$ Proceeding in this way for $65\leq q<149$, we get that, the inequality $q^{3/2}>2\C q^{1/\nu}\mathcal{S}$ holds for all $68\leq q\leq 149.$ Now, for $q<68$, we use the SageMath procedure {\small \texttt{TEST_SIEVE(q,n,$\vartheta$)}} with $\vartheta=2$, which returns \texttt{False} for $q=$ 2, 3, 4, 5, 8, 9, 11, 13, and $n=$ 7. In the end, for these pairs,  we explicitly search for a pair $(\alpha,\alpha^{-1})$ of primitive $1$-normal elements in $\mathbb{F}_{q^n}$ over $\mathbb{F}_q$ using SageMath procedure {\small \texttt{DIRECT_SEARCH(q,n)}}, and find that such a pair always exists.

Finally, for the cases $n=8$, 9, and $q<M_n$, again we use Lemma \ref{L5.4} with $d=q^4-1$, $q^3-1$, respectively. In these cases, the primes $p$ dividing $(q^n-1)/d$, co-prime to $d$, are of the form $8s+1$, when $n=8$, and of the form $9s+1$, when $n=9$. Now, following the same line as in the case $n=7$, we have reduced the bounds on $q$ from $M_8$ to 203, and from $M_9$ to 41.Now, for $q<68$, we use the SageMath procedure {\small \texttt{TEST_SIEVE(q,n,$\vartheta$)}} with $\vartheta=2$, which returns \texttt{False} for $q=$ 3, 5, 7, 9, 11, 13, 17 and $n=$ 8, and $q=$ 2, 4, 7, 19, and $n=9$. Finally, for these pairs,  we explicitly search for a pair $(\alpha,\alpha^{-1})$ of primitive $1$-normal elements in $\mathbb{F}_{q^n}$ over $\mathbb{F}_q$ using SageMath procedure {\small \texttt{DIRECT_SEARCH(q,n)}}, and find that such a pair always exists.
\end{proof}
\begin{lemma}\label{L5.6}
	Let $7\leq n\leq 13$ and $q$ be a prime power such that $\mathrm{gcd}(q,n)\neq1$. Then, there always exists a pair $(\alpha,\alpha^{-1})$ of primitive $1$-normal elements in $\mathbb{F}_{q^n}$ over $\mathbb{F}_q$.
\end{lemma}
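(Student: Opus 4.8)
The plan is to follow the pattern of Lemmas~\ref{L5.1}, \ref{L5.3} and \ref{L5.5}, exploiting the rigidity forced by $\gcd(q,n)\neq1$. Since $7\le n\le13$, this hypothesis means that $p:=\mathrm{char}(\mathbb{F}_q)$ divides $n$, so $q$ is a power of one of $2,3,5,7,11,13$; writing $n=p^{i}n'$ with $\gcd(n',p)=1$ we get $n'\in\{1,2,3,4,5\}$. In characteristic $p$ we have $x^{n}-1=(x^{n'}-1)^{p^{i}}$, hence $\mathrm{rad}(x^{n}-1)=\mathrm{rad}(x^{n'}-1)$; in particular $W(x^{n}-1)=W(x^{n'}-1)$ is a small absolute constant (depending at most on $q$ modulo a small integer), and choosing $g=x-1\in P_{1}$ gives $G=\mathrm{rad}(x^{n}-1)/(x-1)$, so $W(G)=W(x^{n}-1)/2$. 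Here $\vartheta=3$ because $\gcd(q,n)\neq1$, so the exponent on the left of \eqref{Eq13} is $n/2-3$. By Proposition~\ref{P4.1} a desired pair already exists in all but finitely many of these fields; the point is to show that there are in fact no exceptions.

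For $n\in\{10,11,12,13\}$, where $n/2-3\ge2$, I would argue exactly as in Lemma~\ref{L5.5}: using $W(R)\le W(q^{n}-1)\le\C q^{n/\nu}$ from Lemma~\ref{L4.2}, the inequality \eqref{Eq13} holds once $q$ exceeds an explicit bound $M_{n}$ for a suitable $\nu$; then for the finitely many admissible $q<M_{n}$ (powers of $2$, $3$, $5$, $11$ or $13$) one runs \texttt{TEST\_SIEVE}$(q,n,3)$, choosing $h\mid x^{n}-1$, $d\mid R$, $H\mid G$ so that $\D>0$ and $\mathcal{S}$ is small, and finishes the short list of surviving pairs by \texttt{DIRECT\_SEARCH}$(q,n)$. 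For $n=10$ and $n=12$ it is convenient to split according to $\mathrm{ord}_{n'}(q)$ so as to keep $W(x^{n'}-1)$ minimal for most $q$.

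For $n\in\{7,8,9\}$ the exponent $n/2-3\in\{\tfrac12,1,\tfrac32\}$ is too small for the crude estimate to be usable, so here I would repeat the sieving argument of Lemma~\ref{L5.5} for these $n$, now with $\vartheta=3$: take $d=q^{a}-1$ with $a=1,4,3$ respectively, so that every prime dividing $(q^{n}-1)/d$ that is coprime to $d$ is congruent to $1$ modulo $n$. Since such primes are sparse, Lemma~\ref{L5.4} gives $\D>0$ with $\mathcal{S}$ bounded by an absolute constant, and the condition $q^{n/2-3}>2W(d)\mathcal{S}$ with $W(d)\le\C q^{a/\nu}$ holds for all but finitely many admissible $q$. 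The residual small powers of $7$, $2$ and $3$ are then cleared by \texttt{TEST\_SIEVE}$(q,n,3)$, and any surviving pair is confirmed by \texttt{DIRECT\_SEARCH}$(q,n)$; since the statement allows no exceptions, each surviving pair must actually be shown to exist.

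The main obstacle is $n=7$: here $q$ is a power of $7$, so even after sieving with $d=q-1$ the bound on $q$ remains fairly large, and the intermediate fields (for instance $\mathbb{F}_{7^{14}}$ or $\mathbb{F}_{7^{21}}$) are far out of reach of a brute-force search and must be disposed of by the sieve alone. The delicate step is then to check, for each such $q$ in turn, that the auxiliary data $h\mid x^{7}-1$, $d\mid R$, $H\mid G$ can be chosen with $\D>0$ while $W(h)W(d)W(H)\mathcal{S}$ is small enough for \eqref{Eq12}; the analogous but milder difficulty at $n=8,9$ is the corresponding choice of $d$ and the control of the primes $\equiv1\pmod{n}$. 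A minor but essential point throughout is to use $\vartheta=3$, not $2$, in both the analytic estimates and the SageMath routines, since $x^{n}-1$ has repeated irreducible factors in this characteristic.
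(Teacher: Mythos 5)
Your plan follows the paper's proof essentially step for step: the crude bound \eqref{Eq19} (equivalently \eqref{Eq13} with $\vartheta=3$ and $W(R)\le\C q^{n/\nu}$) to bound $t$ for each admissible $(n,p)$, then for the hard cases $n=7$ (powers of $7$, $d=q-1$) and $n=8$ (powers of $2$, $d=q^4-1$) the sieve of Lemma \ref{L5.4} exploiting primes $\equiv1\pmod n$, and finally \texttt{TEST\_SIEVE}$(q,n,3)$ and \texttt{DIRECT\_SEARCH} for the residual small fields — exactly as in the paper, including the correct use of $\vartheta=3$. The only cosmetic difference is that you also propose the $d=q^3-1$ sieve for $n=9$, whereas the paper disposes of $n=9$ (and $10\le n\le13$) by directly testing \eqref{Eq13} with exact values of $W(x^n-1)$, $W(R)$, $W(G)$ before sieving; both routes reduce to the same finite computation.
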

\begin{proof}
	Since, $\mathrm{gcd}(q,n)\neq1$, $q=p^t$, for each prime $p$ dividing $n$ and some positive integer $t$, and Inequality \eqref{Eq13} holds, if 
	\begin{equation}\label{Eq19}
		p^{nt/2-3}>\C p^{nt/\nu}2^{2n}
	\end{equation}
	For each $7\leq n\leq13$, the above inequality holds for $t\geq t_{n,p}$ and the suitable choices of $\nu$, listed in Table \ref{Table5}. 
	\begin{table}[h!]
		\caption{Values of $t_{n,p}$ such that Inequality \eqref{Eq19} holds for $t\geq t_{n,p}$.}\label{Table5}
		\centering
		{\scriptsize
			\begin{tabular}{|l|lllllll|}
				\hline
				$t_{n,p}$&$3797$&$223$&$46$&$42,19$&$9$&$24,15$&$6$\\
				\hline
				$(n,p)$&$(7,7)$&$(8,2)$&$(9,3)$&$(10,2), (10,5)$&$(11,11)$&$(12,2), (12,3)$&$(13,13)$\\
				\hline
				$\nu$&15.7&10.3&9&8&7.5&7.5&7\\
				\hline
		\end{tabular}}
	\end{table} 
For the cases $9\leq n\leq 13$, and $t<t_{n,p}$, by calculating the exact values of $W(x^n-1)$, $W(R)$ and $W(G)$, and testing Inequality \eqref{Eq13} with $\vartheta=3$, we obtain that, there are $25$ pairs of $(q,n)$, which fail to satisfy this inequality. For these pairs, we use the SageMath procedure {\small \texttt{TEST_SIEVE(q,n,$\vartheta$)}} with $\vartheta=3$, which returns \texttt{False} for the pairs $(3,9)$, $(9,9)$, $(2,10)$, $(4,10)$, $(16,10)$, $(5,10)$, $(2,12)$, $(4,12)$, $(3,12)$ and $(9,12)$. Finally, for these pairs,  we explicitly search for a pair $(\alpha,\alpha^{-1})$ of primitive $1$-normal elements in $\mathbb{F}_{q^n}$ over $\mathbb{F}_q$ using SageMath procedure {\small \texttt{DIRECT_SEARCH(q,n)}}, and find that such a pair always exists.

Now, let $n=7$, $q=7^t$ and $t<t_{7,7}$. Clearly, $x^7-1$ has only one linear factor over $\mathbb{F}_q$ and $7\nmid q^7-1$, therefore, using Lemma \ref{L4.4} with $d=q-1$, $h=1$ and $H=1$, we get that there exists a pair $(\alpha, \alpha^{-1})$ of desired properties, if $7^{t/2}>2\C 7^{t/\nu}\mathcal{S}$. Since, the primes dividing $(q^7-1)/d$ are of the form $7s+1$, so let $P_l$ and $S_l$ be the product and sum of reciprocals of first $l$ primes of the form $7s+1$ respectively, such that $P_l\leq\sum_{i=0}^{6}7^{it}$, then $l\leq 3891$, and from Lemma \ref{L4.4}, $\D\geq 1-S_l-1/7^t$ and $\mathcal{S}\leq l/D+2.$ Now, For $t\geq8$, we get $\D>0.7661$ and $\mathcal{S}\leq 5080.82$, and inequality 
$7^{t/2}>2\C 7^{t/\nu}\mathcal{S}$ holds for $t\geq 20$ and $\nu=5$. Proceeding in this way for $8\leq t< 20$, we get that, the inequality $7^{t/2}>2\C 7^{t/\nu}\mathcal{S}$ always holds for $t\geq 11.$ In the end, for $1\leq t\leq10$, we first perform the SageMath procedure {\small \texttt{TEST_SIEVE(q,n,$\vartheta$)}} with $\vartheta=3$, which returns \texttt{False} only for $t=$ 1, 2, 3, 4, and then for these values,  we explicitly search for a pair $(\alpha,\alpha^{-1})$ of primitive $1$-normal elements in $\mathbb{F}_{q^n}$ over $\mathbb{F}_q$ using SageMath procedure {\small \texttt{DIRECT_SEARCH(q,n)}}, and find that such a pair always exists.

Now, in the case $n=8$, we have $q=2^t$ and $x^8-1=(x-1)^8$ over $\mathbb{F}_q.$ If $d=q^4-1$ and $p$ is a prime divisor of $q^4+1$ co-prime to $d$, then $q^4\not\equiv1(\mathrm{mod}\hspace{0.5mm}p)$
and $q^4\equiv-1(\mathrm{mod}\hspace{0.5mm}p)$, which implies that, the multiplicative order of $q$ modulo $p$ is $8$. Thus, $p$ is of the form $8s+1.$ Again, we use Lemma \ref{L4.4} with $d=q^4-1$, $h=H=1$. Let $P_l$ and $S_l$ be the product and sum of reciprocals of first $l$ primes of the form $8s+1$ respectively, such that $P_l\leq2^{4t}+1$, then $l\leq 90$ for $t<t_{8,2}$, and from Lemma \ref{L4.4}, $\D\geq 1-S_l-1/2^t$ and $\mathcal{S}\leq l/D+2.$ Now, For $t\geq8$, we get $\D>0.764$ and $\mathcal{S}\leq 119.75$, and inequality 
$2^t>2\C 2^{4t/\nu}\mathcal{S}$ holds for $t\geq 35$ and $\nu=7$. Proceeding in this way for $8\leq t< 35$, we get that, the inequality $2^t>2\C q^{4t/\nu}\mathcal{S}$ always holds for $t\geq 30.$ In the end, for $1\leq t\leq29$, we obtain that, there are 14 values of $t$ ($1\leq t\leq 12;\ t= 15$, 18) which fails to satisfy Inequality \eqref{Eq13}. For these values, we use the SageMath procedure {\small \texttt{TEST_SIEVE(q,n,$\vartheta$)}} with $\vartheta=3$, which returns \texttt{False} for $1\leq t\leq 6$. Finally, we explicitly search for a pair $(\alpha,\alpha^{-1})$ of primitive $1$-normal elements in $\mathbb{F}_{q^n}$ over $\mathbb{F}_q$ using SageMath procedure {\small \texttt{DIRECT_SEARCH(q,n)}}, and find that such a pair always exists.
\end{proof}

\begin{lemma}\label{L5.7}
	Let $n=$ $5$, $6$, and $q$ be a prime power such that $\mathrm{gcd}(q,n)=1$. Then, there always exists a pair $(\alpha,\alpha^{-1})$ of primitive $1$-normal elements in $\mathbb{F}_{q^n}$ over $\mathbb{F}_q$ with the sole genuine exception $(4,5)$. 
\end{lemma}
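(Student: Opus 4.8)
The plan is to run the same machinery as in Lemmas \ref{L5.5} and \ref{L5.6}, but now with $\vartheta=2$ (since $\gcd(q,n)=1$), so that Inequality \eqref{Eq13} reads $q^{n/2-2}>2W(x^n-1)W(R)W(G)$. Because $n/2-2$ equals only $1/2$ when $n=5$ and $1$ when $n=6$, the crude estimate $W(R)\le\C q^{n/\nu}$ by itself forces a huge threshold on $q$ and leaves far too many values uncovered; the sieving inequality \eqref{Eq12}, in the form of Lemma \ref{L5.4}, is therefore the workhorse of the argument.

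First I would settle $n=5$. Since $\gcd(q,5)=1$, for every prime $p$ dividing $(q^5-1)/(q-1)$ and coprime to $q-1$ the order of $q$ modulo $p$ is $5$, so $p\equiv 1\pmod 5$; hence Lemma \ref{L5.4} applies with $d=q-1$, $h=H=1$ and $n_0=5$, where $P_l=\prod_{i=1}^{l}p_i\le q^4+q^3+q^2+q+1$, giving $\D\ge 1-S_l-9/q$ and $\mathcal{S}\le(l+8)/\D+2$. Bounding $W(d)=W(q-1)$ by Lemma \ref{L4.2} and then using the usual two-stage bootstrap — first assume $q$ is not too small to obtain a rough bound, then feed it back to tighten $l$, $S_l$ and hence $\mathcal{S}$, exactly as for $n=7$ in Lemma \ref{L5.5} — reduces the problem to finitely many $q$. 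For those, I would test \eqref{Eq13} directly with the exact values of $W(x^5-1)$, $W(R)$ and $W(G)$ to shorten the list, then run \texttt{TEST\_SIEVE(q,5,2)} on what survives, and finally apply \texttt{DIRECT\_SEARCH(q,5)} to the handful of pairs that \texttt{TEST\_SIEVE} cannot clear. This should yield a primitive $1$-normal pair for every $q$ except $q=4$; for $q=4$ one instead carries out an exhaustive search in $\mathbb{F}_{4^5}$ (where $|\mathbb{F}_{4^5}^{*}|=1023=3\cdot 11\cdot 31$) and verifies that no such pair exists, so $(4,5)$ is a genuine exception.

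The case $n=6$ is entirely parallel. Writing $q^6-1=(q^2-1)(q^4+q^2+1)$, for every prime $p$ dividing $(q^6-1)/(q^2-1)$ and coprime to $q^2-1$ the order of $q$ modulo $p$ divides $6$ but not $2$, hence is $3$ or $6$, so $p\equiv 1\pmod 3$; thus Lemma \ref{L5.4} applies with $d=q^2-1$, $h=H=1$ and $n_0=3$, now with $P_l\le q^4+q^2+1$ and $\D\ge 1-S_l-11/q$. Bounding $W(q^2-1)$ by Lemma \ref{L4.2} and bootstrapping as above yields an explicit $q$-bound; below it one tests \eqref{Eq13} exactly, then runs \texttt{TEST\_SIEVE(q,6,2)}, then \texttt{DIRECT\_SEARCH(q,6)}, and a pair is found in every remaining case, so $n=6$ contributes no exception.

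The main obstacle is the feebleness of $q^{n/2-2}$ when $n=5$: even after sieving one must carefully control the number $l$ of ``bad'' primes of the form $5s+1$ lying below $q^4+q^3+q^2+q+1$ and keep $\D=1-S_l-9/q$ positive, which — exactly as for $n=7$ in Lemma \ref{L5.5} — forces the bootstrapping step and still leaves a comparatively large range of $q$ to be cleared by exact computation. Establishing that $(4,5)$ is a genuine exception, rather than merely a spot where the estimates fail, is the other point that demands an explicit verification.
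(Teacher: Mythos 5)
Your outline follows the paper's proof almost exactly: both start from Inequality \eqref{Eq18} (which gives the huge thresholds $M_5\approx 6.97\times 10^{323}$ and $M_6\approx 4.74\times 10^{27}$), then invoke Lemma \ref{L5.4} with $h=H=1$ and $d=q-1$ (sieving primes $\equiv 1\ (\mathrm{mod}\ 5)$) for $n=5$, respectively $d=q^2-1$ (primes $\equiv 1\ (\mathrm{mod}\ 3)$) for $n=6$, iterate the resulting inequality, and finish with exact tests of \eqref{Eq13}, the procedure \texttt{TEST\_SIEVE} and finally \texttt{DIRECT\_SEARCH}, with $(4,5)$ isolated as the sole genuine exception by explicit search. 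Your congruence arguments for the sieving primes and the bounds $\mathcal{D}\ge 1-S_l-9/q$, $\mathcal{S}\le (l+8)/\mathcal{D}+2$ (and the analogues for $n=6$) are exactly those of Lemma \ref{L5.4}.

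The one substantive omission is a second analytic sieving round that the paper needs before any per-$q$ verification becomes feasible. With $d=q-1$ (or $q^2-1$) the factor $W(d)$ must still be bounded by $\C q^{1/\nu}$ via Lemma \ref{L4.2}, so the inequality $q^{1/2}>2\C q^{1/\nu}\mathcal{S}$ stalls: even after iteration the paper only reaches $q<4.96\times 10^{9}$ for $n=5$ (and $q<94531$ for $n=6$). Checking \eqref{Eq13} exactly, or running \texttt{TEST\_SIEVE}, for every prime power below $5\times 10^{9}$ would require the factorization of $q^5-1$ (a number of roughly $48$ digits) for on the order of $10^{8}$ values of $q$, which is not a realistic computation; so your plan, as stated, would fail at the ``test the remaining $q$ exactly'' step for $n=5$. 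The paper closes this gap by reapplying Lemma \ref{L5.4} with $d=\gcd(2\cdot 3\cdot 5,\,q^n-1)$: then $W(d)\le 2^{3}$ is an absolute constant (no $\C q^{1/\nu}$ factor at all) and the sieving primes are simply all odd primes, so the inequalities $q^{1/2}>2^{4}\mathcal{S}$ (for $n=5$) and $q>2^{4}\mathcal{S}$ (for $n=6$) bring the ranges down to $q<3839356$ and $q<1109$ respectively, after which the exact tests are tractable (and still leave $871$, respectively $47$, values that must be settled by \texttt{DIRECT\_SEARCH}). You need this extra round, or some equivalent device eliminating the $W(q-1)$-type factor, to make the terminal computation finite in practice, especially for $n=5$.
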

\begin{proof}
	First, we test Inequality \eqref{Eq18} in both the cases $n=$ 5, 6, and get that, this holds for $q\geq M_n$, where $M_5=6.97\times 10^{323}$, if we set $\nu=11.9$, and $M_6=4.74\times 10^{27}$, if we set $\nu=8.7.$ Now, for $q< M_n$, in each case, we use Lemma \ref{L5.4} with $d=q-1$ for $n=5$, and $d=q^2-1$ for $n=6.$ Notice that, primes dividing $(q^n-1)/d$, co-prime to $d$, are of the form $5s+1$ for $n=5$, and $3s+1$ for $n=6.$ Then, for $n=5$ and $q\geq10^5$, we get $l\leq 360$, $\D>0.648$, $\mathcal{S}\leq 569.9$, and  for $n=6$ and $q\geq10^4$, we get $l\leq 48$, $\D>0.429$, $\mathcal{S}\leq 137.11.$ With these, we test the inequalities $q^{1/2}>2\C q^{1/\nu}\mathcal{S}$ for $n=5; \ \nu=5.5$, and $q>2\C q^{2/\nu}\mathcal{S}$ for $n=6;\ \nu=5.2$, and get that, there always exists a pair $(\alpha,\alpha^{-1})$ of desired properties for $q\geq 4.395\times 10^{13}$ and $n=5$, and $q\geq 646882$ and $n=6.$ By repeating the process for remaining values of $q$ in each case, we significantly reduce the bound on $q$ from $M_5$ to $4.96\times 10^9$, and from $M_6$ to $94531.$
	
	Now, for $n=5;$ $6$ and $q<4.96\times 10^9;$ $94531$, respectively, we use Lemma \ref{L5.4} with $d=\mathrm{gcd}(2\cdot3\cdot5,q^n-1)$. Notice that, the primes dividing $(q^n-1)/d$ co-prime to $d$ are of the form $2s+1$ in each case. Then, for $10^5\leq q<4.96\times 10^9$ and $n=5$, we get $l\leq 28$, $\D>0.175$, $\mathcal{S}\leq 207.05$, and the inequality $q^{1/2}>2^4\mathcal{S}$ holds for $q\geq10973928$. Similarly, for $10^3\leq q <94531$ and $n=6$, we get $l\leq 19$, $\D>0.253$, $\mathcal{S}\leq 116.58$, and the inequality $q>2^4\mathcal{S}$ holds for $q\geq1866$. Proceeding in this way, we get that, there always exists a pair $(\alpha, \alpha^{-1})$ of desired properties for $q\geq 3839356$ and $n=5$, and $q\geq1109$ and $n=6$. For the remaining values of $q$ in each case, we run the SageMath procedure {\small \texttt{TEST_SIEVE(q,n,$\vartheta$)}} with $\vartheta=2$ and get that,  there are $871$ values of $q$  (listed in Table \ref{Table6}) in the case of $n=5$, and $47$ values of $q$  (listed in Table \ref{Table6}) in the case of $n=6$, for which this procedure returns \texttt{False}. Finally, we perform the procedure {\small \texttt{DIRECT_SEARCH(q,n)}} for the remaining values of $t$, and find that, there always exists the desired pairs $(\alpha, \alpha^{-1})$ with the sole genuine exception $(4,5).$ 
\end{proof}
	Summing up Lemmas \ref{L5.1}, \ref{L5.3}, \ref{L5.5}, \ref{L5.6}, and \ref{L5.7}, we conclude our Theorem \ref{T1.3}.
\subsection{A note on the cases $n=1,2,3,4$}
	
	From the definition of $k$-normal elements in $\mathbb{F}_{q^n}$ over $\mathbb{F}_q$, it is clear that $0\leq k\leq n-1$, therefore, it is sensible to talk about the existence of a pair $(\alpha,\alpha^{-1})$ of primitive $1$-normal elements in $\mathbb{F}_{q^n}$ over $\mathbb{F}_q$ for $n>1$. Now, for the case $n=2$, if $\alpha$ is a $1$-normal element in $\mathbb{F}_{q^n}$ over $\mathbb{F}_q$, then either $\alpha^q-\alpha=0$ or $\alpha^q+\alpha=0$, which implies $\mathrm{ord}(\alpha)\leq 2(q-1)<q^2-1$. Thus, there does not exist any pair $(\alpha, \alpha^{-1})$ of primitive $1$-normal elements in $\mathbb{F}_{q^2}$ over $\mathbb{F}_q.$ Further, we show that, such a pair does not exists in the case of $n=3$ also.  
	
	\begin{lemma}\label{L5.8}
		Let $n=3$ and $q\geq 2$ be any prime power. Then, there does not exist any pair $(\alpha,\alpha^{-1})$ of primitive $1$-normal elements in $\mathbb{F}_{q^n}$ over $\mathbb{F}_q.$
	\end{lemma}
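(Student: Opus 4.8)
The plan is to translate $1$-normality into a statement about the $\mathbb{F}_q$-order and then to exploit the very special shape of $x^{3}-1$. Recall that $\alpha\in\mathbb{F}_{q^{3}}$ is $1$-normal over $\mathbb{F}_q$ iff $\mathrm{Ord}_q(\alpha)$ is a monic divisor of $x^{3}-1$ of degree $n-k=2$; and if $\alpha$ is primitive then $\mathbb{F}_q(\alpha)=\mathbb{F}_{q^{3}}$, so $\mathrm{Ord}_q(\alpha)$ is never a \emph{proper} divisor of a given degree-$2$ divisor. Since the factorisation of $x^{3}-1$ over $\mathbb{F}_q$ depends only on $q\bmod 3$, I would split into the case $q\not\equiv1\pmod 3$ (which includes $3\mid q$) and the case $q\equiv1\pmod 3$.

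In the case $q\not\equiv1\pmod 3$ there is a unique degree-$2$ divisor $g_0$ of $x^{3}-1$: it is $(x-1)^{2}$ when $3\mid q$ and $x^{2}+x+1$ when $q\equiv2\pmod 3$. A one-line computation shows $g_0\circ\alpha=\mathrm{Tr}_{\mathbb{F}_{q^{3}}/\mathbb{F}_q}(\alpha)$ in both subcases (in characteristic $3$ one uses $-2\equiv 1$). Hence, for primitive $\alpha$, being $1$-normal is equivalent to $\mathrm{Tr}(\alpha)=0$. If both $\alpha$ and $\alpha^{-1}$ are primitive and $1$-normal, then $\mathrm{Tr}(\alpha)=0$ and, since $\mathrm{Tr}(\alpha^{-1})=e_2(\alpha)/N(\alpha)$ (with $e_2(\alpha)$ the second elementary symmetric function of the $\mathbb{F}_q$-conjugates of $\alpha$ and $N(\alpha)$ its norm), also $e_2(\alpha)=0$. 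Thus the minimal polynomial of $\alpha$ over $\mathbb{F}_q$ is $x^{3}-N(\alpha)$, so $\alpha^{3}=N(\alpha)\in\mathbb{F}_q^{*}$ and $\mathrm{ord}(\alpha)\mid 3(q-1)$, contradicting $\mathrm{ord}(\alpha)=q^{3}-1=(q-1)(q^{2}+q+1)$ with $q^{2}+q+1\ge 7$.

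For $q\equiv1\pmod 3$ I would pass to Kummer coordinates. Let $\omega\in\mathbb{F}_q$ be a primitive cube root of unity; since $\omega$ is an eigenvalue of the Frobenius acting on $\mathbb{F}_{q^{3}}$, there is $\theta\neq0$ with $\theta^{q}=\omega\theta$, whence $d:=\theta^{3}\in\mathbb{F}_q^{*}$ is a non-cube and $\{1,\theta,\theta^{2}\}$ is an $\mathbb{F}_q$-basis. For $\alpha=a+b\theta+c\theta^{2}$ one has $\alpha^{q^{i}}=a+\omega^{i}b\theta+\omega^{2i}c\theta^{2}$, so for any $h\mid x^{3}-1$, $h\circ\alpha=h(1)a+h(\omega)b\theta+h(\omega^{2})c\theta^{2}$; consequently $\deg\mathrm{Ord}_q(\alpha)$ equals the number of nonzero entries among $a,b,c$, and $\alpha$ is $1$-normal iff exactly one of $a,b,c$ vanishes. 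Computing $\alpha^{-1}=\alpha^{q}\alpha^{q^{2}}/N(\alpha)$ and reducing via $\theta^{3}=d$ gives $\alpha^{-1}=a'+b'\theta+c'\theta^{2}$ with
\[
N(\alpha)\,a'=a^{2}-dbc,\qquad N(\alpha)\,b'=dc^{2}-ab,\qquad N(\alpha)\,c'=b^{2}-ac .
\]
The key observation is that whenever exactly one of $a,b,c$ is zero, each of $a^{2}-dbc$, $dc^{2}-ab$, $b^{2}-ac$ collapses to a product of two nonzero quantities from $\{a,b,c,d\}$, so $a',b',c'$ are all nonzero; hence $\mathrm{Ord}_q(\alpha^{-1})=x^{3}-1$, i.e.\ $\alpha^{-1}$ is $0$-normal, \emph{not} $1$-normal. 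Thus in this case no pair $(\alpha,\alpha^{-1})$ of $1$-normal elements exists at all (primitivity is not even needed), and combining the two cases finishes the proof.

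I expect the main obstacle to be the bookkeeping in the case $q\equiv1\pmod3$: setting up the Kummer basis, verifying the explicit inversion formula, and checking in each of the three subcases that a single vanishing coordinate of $\alpha$ forces all three coordinates of $\alpha^{-1}$ to be nonzero. The case $q\not\equiv1\pmod 3$ is comparatively routine; its only delicate points are the characteristic-$3$ identity $g_0\circ\alpha=\mathrm{Tr}(\alpha)$ and the remark that, for primitive $\alpha$, the $\mathbb{F}_q$-order cannot drop to a proper divisor of $g_0$.
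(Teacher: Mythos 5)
Your proof is correct, but only half of it follows the paper's route. For $q\not\equiv 1\pmod 3$ your argument is essentially the paper's Case 1: both traces vanish, hence $e_2(\alpha)=0$, the minimal polynomial is $x^3-N(\alpha)$, so $\alpha^3\in\mathbb{F}_q^*$ and $\mathrm{ord}(\alpha)\mid 3(q-1)<q^3-1$, contradicting primitivity (the paper reaches the characteristic-$3$, $(x-1)^2$ situation through its Case 2 instead, via $(\alpha^q-\alpha)^{q-1}\in\mathbb{F}_q$, but the conclusion is the same). For $q\equiv 1\pmod 3$ you genuinely diverge: the paper enumerates the possible degree-two $\mathbb{F}_q$-orders of $\alpha$ and $\alpha^{-1}$ (its Cases 2--4) and in every case invokes primitivity, deducing the impossible divisibility $q^3-1\mid(q^2-1)(q-1)$ from statements like $(\alpha^{q}-a\alpha)^{q-1}=1$; you instead diagonalize the Frobenius in the Kummer basis $\{1,\theta,\theta^2\}$ with $\theta^q=\omega\theta$, $\theta^3=d$, identify $1$-normality with exactly one vanishing eigencoordinate, and read off from $N(\alpha)\alpha^{-1}=(a^2-dbc)+(dc^2-ab)\theta+(b^2-ac)\theta^2$ that the inverse of a $1$-normal element always has all three coordinates nonzero, i.e.\ is $0$-normal. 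This buys a strictly stronger conclusion in the split case (no $1$-normal pair $(\alpha,\alpha^{-1})$ exists at all, primitivity unused) and replaces the order-by-order bookkeeping with one explicit inversion formula, at the price of setting up the eigenbasis; the paper's treatment is more uniform, since every case ends with the same kind of bound on $\mathrm{ord}(\alpha)$. A cosmetic point only: your appeal to $\mathbb{F}_q(\alpha)=\mathbb{F}_{q^3}$ to rule out the order dropping below degree two is needed only for a converse you never use, and in the non-split case the sole available linear divisor is $x-1$ anyway, so nothing is at stake there.
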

	\begin{proof}
		Suppose that, there exists a pair $(\alpha,\alpha^{-1})$ of primitive $1$-normal elements in $\mathbb{F}_{q^3}$ over $\mathbb{F}_q$.
		Since, $x^3-1=(x-1)^3$ or $(x-1)(x^2+x+1)$ or $(x-1)(x-a)(x-a^{-1})$ for some $a\in\mathbb{F}_q\setminus\{0,1\}$. Therefore, we have the following cases on the $\mathbb{F}_q$-orders of $\alpha$ and $\alpha^{-1}$.
		
		\noindent
		{\bf Case 1.} If $\mathrm{Ord}_q(\alpha)=\mathrm{Ord}_q(\alpha^{-1})=x^2+x+1$, then, the trace of $\alpha$ and $\alpha^{-1}$ over $\mathbb{F}_q$ is equal to zero. Consequently, the primitive polynomial of $\alpha$ will be of the form $x^3+a$ for some $a\in \mathbb{F}_q,$ and hence $\alpha^3\in \mathbb{F}_q.$ This means $q^3-1\mid 3(q-1)$, which is never possible for any $q>1$.
		
		\noindent
		{\bf Case 2.} If $(x-1)\mid\mathrm{gcd}(\mathrm{Ord}_q(\alpha),\mathrm{Ord}_q(\alpha^{-1}))$, then we get that, $(\alpha^{q}-\alpha)^{q-1}$ and $(\alpha^{-q}-\alpha^{-1})^{q-1}$ both belong to $\mathbb{F}_{q}.$ This implies $\alpha^{q^2-1}\in \mathbb{F}_q$, i.e. $q^3-1\mid (q^2-1)(q-1)$, which is not possible.
		
		\noindent
		{\bf Case 3.} If $\mathrm{Ord}_q(\alpha)=(x-1)(x-a)$ and $\mathrm{Ord}_q(\alpha^{-1})=(x-a)(x-a^{-1})$, then we get that, $(\alpha^{q}-a\alpha)^{q-1}=1$ and $(\alpha^{-q}-a^{-1}\alpha^{-1})^{q-1}\in\mathbb{F}_{q}.$ This implies $\alpha^{q^2-1}\in \mathbb{F}_q$, i.e. $q^3-1\mid (q^2-1)(q-1)$, which is not possible.
		
		\noindent
		{\bf Case 4.} If $\mathrm{Ord}_q(\alpha)=(x-1)(x-a^{-1})$ and $\mathrm{Ord}_q(\alpha^{-1})=(x-a)(x-a)$,then by interchanging the role of $\alpha$ and $\alpha^{-1}$ in Case 3, we get that, $q^3-1\mid (q^2-1)(q-1)$, which is not possible.
		
		This completes the proof of the lemma.
	\end{proof}
 We also provide a necessary condition for the existence of a pair $(\alpha,\alpha^{-1})$ of primitive $1$-normal elements in $\mathbb{F}_{q^4}$ over $\mathbb{F}_q.$
\begin{lemma}\label{L5.9}
	If there exists a pair $(\alpha,\alpha^{-1})$ of primitive $1$-normal elements in $\mathbb{F}_{q^4}$ over $\mathbb{F}_q$, then $q\equiv1\hspace{0.5mm}(\mathrm{mod}\hspace{0.5mm4}).$
\end{lemma}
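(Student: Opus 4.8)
The plan is to split the argument according to the factorisation of $x^4-1$ over $\mathbb{F}_q$, which depends on whether $q$ is even and, for $q$ odd, on $q\bmod 4$. When $q\equiv 1\ (\mathrm{mod}\ 4)$ there is nothing to prove, so the task is to show that a pair $(\alpha,\alpha^{-1})$ of primitive $1$-normal elements of $\mathbb{F}_{q^4}$ cannot exist when $q$ is even or $q\equiv 3\ (\mathrm{mod}\ 4)$. In both of these cases the strategy mirrors Lemma \ref{L5.8}: translate the $1$-normality of $\alpha$ and of $\alpha^{-1}$ into explicit algebraic relations and combine them to force $\alpha$ into a proper subfield or to have too small a multiplicative order, contradicting the primitivity of $\alpha$ in $\mathbb{F}_{q^4}$.

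\textbf{The case $q$ even.} Here $x^4-1=(x+1)^4$, so a $1$-normal $\alpha$ has $\mathbb{F}_q$-order exactly $(x+1)^3$; since $(x+1)^2\circ\alpha=\alpha^{q^2}+\alpha$ in characteristic $2$, the element $\beta:=\alpha^{q^2}+\alpha$ is nonzero, and from $(x+1)^3\circ\alpha=\beta^q+\beta=0$ we get $\beta\in\mathbb{F}_q^*$. Likewise $\gamma:=\alpha^{-q^2}+\alpha^{-1}\in\mathbb{F}_q^*$. Substituting $\alpha^{q^2}=\alpha+\beta$ into $\gamma=(\alpha+\beta)^{-1}+\alpha^{-1}$ and clearing denominators yields $\gamma\alpha^2+\gamma\beta\alpha+\beta=0$, a nontrivial quadratic over $\mathbb{F}_q$ with root $\alpha$ (its leading coefficient $\gamma$ is nonzero). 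Hence $\alpha\in\mathbb{F}_{q^2}$ and $\mathrm{ord}(\alpha)\mid q^2-1<q^4-1$, contradicting primitivity.

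\textbf{The case $q\equiv 3\ (\mathrm{mod}\ 4)$.} Now $x^4-1=(x-1)(x+1)(x^2+1)$ with $x^2+1$ irreducible over $\mathbb{F}_q$. I would fix $\theta\in\mathbb{F}_{q^4}$ with $\theta^{q^2}=-\theta$ (a basis of the $\mathbb{F}_{q^2}$-line $\ker(T^2+1)$, where $T$ is the Frobenius), so that $\mathbb{F}_{q^4}=\mathbb{F}_{q^2}\oplus\mathbb{F}_{q^2}\theta$ and $\theta^2=\delta\in\mathbb{F}_{q^2}^*$, and write $\alpha=a+b\theta$ with $a,b\in\mathbb{F}_{q^2}$. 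Evaluating the operators $(x^2-1)\circ\alpha=-2b\theta$, $\frac{x^4-1}{x-1}\circ\alpha$ and $\frac{x^4-1}{x+1}\circ\alpha$ (the latter two being nonzero scalar multiples of $a^q+a$ and $a^q-a$) and checking which degree-$3$ divisors of $x^4-1$, namely $(x-1)(x^2+1)$ and $(x+1)(x^2+1)$, can occur shows that $\alpha$ is $1$-normal over $\mathbb{F}_q$ precisely when $a\neq 0$, $b\neq 0$ and $a^2\in\mathbb{F}_q$. Since $\alpha^{q^2}=a-b\theta$ we have $\alpha^{q^2+1}=a^2-b^2\delta=:\Delta\in\mathbb{F}_{q^2}^*$ and $\alpha^{-1}=\frac{a}{\Delta}-\frac{b}{\Delta}\theta$; applying the same criterion to $\alpha^{-1}$ gives $(a/\Delta)^2=a^2/\Delta^2\in\mathbb{F}_q$, and since $a^2\in\mathbb{F}_q^*$ this forces $\Delta^2\in\mathbb{F}_q^*$. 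Hence $\alpha^{2(q^2+1)}=\Delta^2\in\mathbb{F}_q^*$, so $q^4-1=\mathrm{ord}(\alpha)$ divides $2(q^2+1)(q-1)$; but $q^4-1=(q^2+1)(q-1)(q+1)$, which would give $q+1\mid 2$, impossible for $q\geq 2$.

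The main obstacle I expect is the bookkeeping in the case $q\equiv 3\ (\mathrm{mod}\ 4)$: carefully determining which degree-$3$ divisors of $x^4-1$ can equal $\mathrm{Ord}_q(\alpha)$, condensing this into the single clean condition ``$a\neq 0$, $b\neq 0$, $a^2\in\mathbb{F}_q$'', and then verifying that inversion transforms this condition into ``$\Delta^2\in\mathbb{F}_q$''. The characteristic-$2$ case is structurally different (there $x^4-1$ is a prime power) but is the easier of the two, while the case $q\equiv 1\ (\mathrm{mod}\ 4)$ is exactly the one permitted by the lemma and needs no argument.
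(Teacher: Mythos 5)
Your proposal is correct and takes essentially the same route as the paper: for $q\not\equiv 1\ (\mathrm{mod}\ 4)$ the possible degree-$3$ $\mathbb{F}_q$-orders of $\alpha$ and $\alpha^{-1}$ are so restricted that $\alpha^{q^2+1}$ (in characteristic $2$) or $\alpha^{2(q^2+1)}$ (for $q\equiv 3\ (\mathrm{mod}\ 4)$, your $\Delta^2$) is forced into $\mathbb{F}_q$, bounding $\mathrm{ord}(\alpha)$ below $q^4-1$ and contradicting primitivity. Your coordinate computation with $\alpha=a+b\theta$, $\theta^{q^2}=-\theta$, and the quadratic argument in the even case are just explicit repackagings of the paper's operator identities, not a different method.
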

\begin{proof}
	Let's assume that $q\not\equiv1\hspace{0.5mm}(\mathrm{mod}\hspace{0.5mm4})$, then the polynomial $x^4-1$ factorizes  either as $(x-1)^4$, or as $(x-1)(x+1)(x^2+1)$ over $\mathbb{F}_q.$ Further, let $(\alpha,\alpha^{-1})$ be a pair of $1$-normal elements in $\mathbb{F}_{q^4}$ over $\mathbb{F}_q$. Then, we have the following cases.

	\noindent
	{\bf Case 1:} $(x-1)\mid \mathrm{gcd}(\mathrm{Ord}_q(\alpha),\mathrm{Ord}_q(\alpha^{-1})),$ then $\alpha^{q^2}-\alpha$ and $\alpha^{-q^2}-\alpha^{-1}$ both will belong to $\mathbb{F}_q$. This implies $\alpha^{q^2+1}\in \mathbb{F}_q$, i.e. $\mathrm{ord}(\alpha)\leq (q^2+1)(q-1)<q^4-1.$
	
	\noindent
	{\bf Case 2:} $(x+1)\mid \mathrm{gcd}(\mathrm{Ord}_q(\alpha),\mathrm{Ord}_q(\alpha^{-1})),$ then $(\alpha^{q^2}-\alpha)^2$ and $(\alpha^{-q^2}-\alpha^{-1})^2$ both will belong to $\mathbb{F}_q$. This implies $\alpha^{2(q^2+1)}\in \mathbb{F}_q$, i.e. $\mathrm{ord}(\alpha)\leq 2(q^2+1)(q-1)<q^4-1.$
	
	\noindent
	{\bf Case 3:} $ \mathrm{gcd}(\mathrm{Ord}_q(\alpha),\mathrm{Ord}_q(\alpha^{-1}))=x^2+1$, then either $\alpha^{q^2}-\alpha$ and $(\alpha^{-q^2}-\alpha^{-1})^2$ belong to $\mathbb{F}_q$, or $(\alpha^{q^2}-\alpha)^2$ and $\alpha^{-q^2}-\alpha^{-1}$ will belong to $\mathbb{F}_q$ simultaneously. In either case, we get that $\mathrm{ord}(\alpha)\leq 2(q^2+1)(q-1)<q^4-1.$
	
	In each case, we get that $\mathrm{ord}(\alpha)<q^4-1$, which implies $\alpha$ is never a primitive element in $\mathbb{F}_q.$ Thus, a pair $(\alpha,\alpha^{-1})$ of primitive $1$-normal elements in $\mathbb{F}_{q^4}$ over $\mathbb{F}_q$ can exist, only if $q\equiv1\hspace{0.5mm}(\mathrm{mod}\hspace{0.5mm}4).$
\end{proof}
For the cases, $n=4$ and $q=4t+1$; $n=5$ and $q=5^t$; and $n=6$ and $q=2^t, 3^t$, where $t$ is a positive integer, we performed the extensive experiments, and obtained that, there always exists a pair $(\alpha,\alpha^{-1})$ of primitive $1$-normal in $\mathbb{F}_{q^n}$ over $\mathbb{F}_q$ unless $n=6$ and $q=2,4$. On the basis of these experiments, we conjecture the following.
\begin{con}
	Let $t$ be a positive integer. There always exists a pair $(\alpha,\alpha^{-1})$ of primitive $1$-normal in $\mathbb{F}_{q^n}$ over $\mathbb{F}_q$ if $n=4$ and $q=4t+1;$ $n=5$ and $q=5^t;$ and $n=6$ and $q=2^t, 3^t$ with the sole genuine exceptions $(2,6)$, $(4,6)$. 
\end{con}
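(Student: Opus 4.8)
We outline a plan of attack on this conjecture. These three families are exactly the pairs $(q,n)$ for which $n\le 2\vartheta$ when $k=1$ — here $\vartheta=2$ for $n=4$, since then $q$ is odd and $\gcd(q,n)=1$, and $\vartheta=3$ for $n=5$ and $n=6$ — so that Corollary~\ref{cor4.1.1}, and even the sieved bound of Lemma~\ref{L4.4}, is vacuous: Inequality~\eqref{Eq13} can never hold. Tracing the exponent back, the obstruction is the factor $q^{\deg(\pi)+\sum_i\deg(\Lambda_i)}$ in Inequality~\eqref{Eq9}, which for a linear $g$ with $(x-a)^2\mid x^n-1$ equals $q^2$, together with the factor $q^{k}=q$ arising when Lemma~\ref{L2.4} is applied to the additive polynomial $g\circ x$ of degree $q$; jointly they force $\vartheta=3$ (or $\vartheta=2$ when $x^n-1$ is separable, as for $n=4$). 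The plan is to replace the generic characteristic function $\Q_g^H$ of \eqref{Eq7} by one tailored to the explicit factorisation of $x^n-1$ in the ambient characteristic, to push the character-sum estimate as far as possible, and then to combine a necessarily new sharpening with the sieve of Lemma~\ref{L4.4} and the procedures \texttt{TEST\_SIEVE} and \texttt{DIRECT\_SEARCH}.

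First I would make $1$-normality fully explicit in each family. For $n=5$ and $q=5^t$, $x^5-1=(x-1)^5$ over $\mathbb{F}_q$ and $(x-1)^4\equiv x^4+x^3+x^2+x+1\pmod 5$, so $\alpha$ is $1$-normal over $\mathbb{F}_q$ exactly when $\mathrm{Tr}_{\mathbb{F}_{q^5}/\mathbb{F}_q}(\alpha)=0$ and $(x-1)^3\circ\alpha\neq0$, i.e. $\Psi_{x-1}(\alpha)=1$ and $\Psi_{(x-1)^2}(\alpha)=0$ in the notation of \eqref{Eq4}. For $n=6$ with $q=2^t$ (resp. $q=3^t$) one has $x^6-1=\big((x-1)(x^2+x+1)\big)^2$ (resp. $\big((x-1)(x+1)\big)^3$), $P_1$ is the set of linear divisors of $x^n-1$, a $1$-normal element is $g\circ\beta$ for $g\in P_1$ and $\beta$ normal, and $1$-normality is again a Boolean combination of conditions $\Psi_h(\alpha)$ for small divisors $h\mid x^n-1$. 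For $n=4$ with $q\equiv1\pmod4$, $x^4-1$ is a product of four distinct linear factors; $\alpha=g\circ\beta$ with $g$ linear and $\beta$ normal, but since $\alpha$ and $\alpha^{-1}$ need not share an $\mathbb{F}_q$-order one has to test $\alpha^{-1}$ against the full $\Q=\sum_{g'\in P_1}\Q_{g'}^G$ rather than a single $g'$. With these reductions, the number $N$ of $(x^n-1)$-free $\gamma$ for which $\alpha=g\circ\gamma$ and $\alpha^{-1}$ are simultaneously primitive and $1$-normal expands, via $\rho_{q^n-1}$ of \eqref{Eq1}, $\Upsilon_{x^n-1}$ of \eqref{Eq2} and the relevant $\Psi_h$'s, into sums of the shape $\sum_{\beta}\chi_D(\beta^q-\beta)\,\psi_0\!\big(y_1\beta+y_2(\beta^q-\beta)^{-1}\big)$; one checks, as in the proof of Theorem~\ref{T4.1}, that the non-principal rational functions here are neither perfect $D$-th powers nor of the form $L^{q^n}-L$, and estimates them by Lemma~\ref{L2.4}.

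The hard part is that even this sharpest form of the Weil estimate retains the $q$-factor from $\deg(\beta^q-\beta)=q$ and the $q^2$-factor from the $\Psi_{(x-1)^2}$-type characters, so by itself it still does not cross the $n\le 2\vartheta$ barrier for $n=4,5,6$; some genuinely new input is required. The candidates I would pursue, roughly in order of optimism, are: (i) a bound beating Weil for $\sum_\beta\chi(\beta^q-\beta)\psi(G(\beta))$ that exploits the additivity of $\beta\mapsto\beta^q-\beta$ (kernel $\mathbb{F}_q$), so that for many characters the inner $\beta$-sum vanishes outright and the remainder reduces to a Kloosterman-type sum over the trace-zero hyperplane; (ii) a recursion in $t$ within each family, bootstrapping from a small base field; (iii) a family-specific arithmetic argument — for instance, for $n=5$, $q=5^t$ the task is that of a primitive element of $\mathbb{F}_q$-trace $0$ whose inverse also has $\mathbb{F}_q$-trace $0$, a prescribed-trace problem for which there is a developed toolkit. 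Whatever sharpening is used must be quantitatively tight, since the statement genuinely fails at $(q,n)=(2,6)$ and $(4,6)$; one would then clear the remaining bounded range of $q$ in each family by Lemma~\ref{L4.4}, followed by \texttt{TEST\_SIEVE} and, for the last few pairs, \texttt{DIRECT\_SEARCH}, exactly as in Section~\ref{Sec5}.
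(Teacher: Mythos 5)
First, a point of calibration: the statement you are asked to prove is stated in the paper only as a \emph{conjecture} (Section~\ref{Sec5}), supported by computer experiments for small parameters; the paper offers no proof, precisely because these three families sit at or below the barrier $n\le 2\vartheta$ where Inequality~\eqref{Eq13}, and likewise the sieved Inequality~\eqref{Eq12} of Lemma~\ref{L4.4}, are vacuous (for $n\le 2\vartheta$ the left side is at most $1$ while the right side is at least $2\mathcal{S}\ge 4$). Your diagnosis of this obstruction is accurate, and your explicit descriptions of $1$-normality in each family (trace zero plus $(x-1)^3\circ\alpha\neq0$ for $n=5$, $q=5^t$; the factorizations of $x^6-1$ in characteristics $2$ and $3$; four distinct linear factors of $x^4-1$ when $q\equiv1\pmod 4$) are correct and match the framework of Lemma~\ref{L3.2} and the characteristic functions \eqref{Eq2}, \eqref{Eq4}.

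However, what you have submitted is a research plan, not a proof, and you say so yourself: the decisive step --- an estimate for sums of the shape $\sum_\beta\chi_D(g\circ\beta)\,\psi_0\big(y_1\beta+y_2(g\circ\beta)^{-1}\big)$ strong enough to beat the $q^{\deg(g\circ x)}$ and $q^{\deg(\pi)+\sum_i\deg(\Lambda_i)}$ losses --- is exactly the ``genuinely new input'' you acknowledge not having, and none of your candidates (i)--(iii) is carried out. Two further points would need repair even at the level of the plan. First, the closing step ``clear the remaining bounded range of $q$ by Lemma~\ref{L4.4}, \texttt{TEST\_SIEVE} and \texttt{DIRECT\_SEARCH}'' cannot work as written: Lemma~\ref{L4.4} is vacuous in these ranges of $n$ (as you note earlier), and the search procedures verify only finitely many pairs $(q,n)$, whereas each family ($q=4t+1$ with $n=4$; $q=5^t$ with $n=5$; $q=2^t,3^t$ with $n=6$) is infinite, so without the hypothetical sharpened bound nothing reduces the problem to a finite check. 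Second, in item (iii) the reduction for $n=5$, $q=5^t$ to ``primitive with trace zero, inverse with trace zero'' drops the condition $(x-1)^3\circ\alpha\neq0$ (equivalently $\alpha\notin M_{(x-1)^2}$), so as stated it characterizes $k$-normality for some $k\ge1$, not $1$-normality; your own earlier formulation is the correct one. In short, the statement remains open in the paper, and your proposal, while correctly locating the obstruction and the relevant structure, leaves the essential analytic ingredient unproved.
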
 
\section{Acknowledgements}
This work was funded by Council of Scientific $\&$ Industrial Research, under Grant F. No. 09/045(1674)/2019-EMR-I and University Grant Commission, under Grant Ref. No. 1042/CSIR-UGC NET DEC-2018.

\bibliographystyle{elsarticle-harv}
\bibliography{inverse_of_r_primitive_k_norm.bib}

\newpage
\appendix
\section{}\label{A1}
{\fontsize{6}{8}\selectfont{
	\begin{longtable}[h!]{p{2mm}|p{12cm}}
		\caption{\scriptsize Pairs $(q,n)$ for which \texttt{TEST_SIEVE(q,n)} returns \texttt{False} in the cases $n=$ 5, 6.\label{Table6}}\\
		\hline
		$n$&$q$\\
		
		\hline
		\endfirsthead
		\multicolumn{2}{l}{Continuation of Table \ref{Table6}}\\ 
		
		\hline
		$n$&$q$\\
		\hline
		\endhead
		
		\hline
		\endfoot
		
		\hline
		\endfoot
		5&2, 3, 4, 7, 8, 9, 11, 13, 16, 17, 19, 23, 27, 29, 31, 32, 37, 41, 43, 47, 49, 53, 59, 61, 64, 67, 71, 73, 79, 81, 83, 89, 97, 101, 103, 107, 109, 113, 121, 127, 128, 131, 137, 139, 149, 151, 157, 163, 167, 169, 173, 179, 181, 191, 193, 197, 199, 211, 223, 227, 229, 233, 239, 241, 243, 251, 256, 257, 263, 269, 271, 277, 281, 283, 289, 293, 307, 311, 313, 317, 331, 337, 343, 347, 349, 353, 359, 361, 367, 373, 379, 383, 389, 397, 401, 409, 419, 421, 431, 433, 439, 443, 449, 457, 461, 463, 467, 479, 487, 491, 499, 503, 509, 512, 521, 523, 529, 541, 547, 557, 563, 569, 571, 577, 587, 593, 599, 601, 607, 613, 617, 619, 631, 641, 643, 647, 653, 659, 661, 673, 677, 683, 691, 701, 709, 719, 727, 729, 733, 739, 743, 751, 757, 761, 769, 773, 787, 797, 809, 811, 821, 823, 827, 829, 839, 841, 853, 857, 859, 863, 877, 881, 883, 887, 907, 911, 919, 929, 937, 941, 947, 953, 961, 967, 971, 977, 991, 997, 1009, 1013, 1019, 1021, 1024, 1031, 1033, 1039, 1049, 1051, 1061, 1063, 1069, 1087, 1091, 1093, 1103, 1109, 1117, 1123, 1129, 1151, 1153, 1163, 1171, 1181, 1193, 1201, 1213, 1229, 1231, 1237, 1249, 1259, 1279, 1289, 1291, 1301, 1303, 1321, 1327, 1331, 1361, 1367, 1369, 1373, 1381, 1399, 1409, 1423, 1429,
		1439, 1447, 1451, 1453, 1459, 1471, 1481, 1483, 1489, 1499, 1511, 1531, 1543, 1549, 1559, 1567, 1571, 1579, 1583, 1597, 1601, 1609, 1619, 1621, 1627, 1667, 1669, 1681, 1693, 1697, 1699, 1709, 1721, 1723, 1741, 1747, 1753, 1759, 1777, 1783, 1789, 1801, 1811, 1831, 1849, 1861, 1867, 1871, 1873, 1879, 1901, 1931, 1933, 1949, 1951, 1993, 1999, 2003, 2011, 2017, 2029, 2053, 2081, 2083, 2087, 2089, 2111, 2113, 2129, 2131, 2137, 2141, 2143, 2161, 2179, 2197, 2203, 2209, 2221, 2239, 2251, 2269, 2281, 2293, 2297, 2311, 2341, 2347, 2351, 2371, 2377, 2381, 2389, 2401, 2411, 2437, 2441, 2467, 2473, 2503, 2521, 2531, 2539, 2551, 2557, 2591, 2593, 2621, 2647, 2659, 2671, 2683, 2689, 2699, 2707, 2711, 2713, 2719, 2731, 2741, 2749, 2767, 2791, 2797, 2801, 2803, 2809, 2851, 2857, 2861, 2887, 2927, 2953, 2971, 3001, 3011, 3019, 3037, 3041, 3049, 3061, 3067, 3079, 3109, 3121, 3169, 3181, 3191, 3217, 3221, 3251, 3259, 3271, 3301, 3319, 3331, 3361, 3371, 3391, 3433, 3457, 3461, 3463, 3469, 3481, 3491, 3499, 3511, 3529, 3541, 3547, 3571, 3581, 3613, 3631, 3671, 3691, 3697, 3701, 3721, 3727, 3733, 3739, 3761, 3821, 3823, 3851, 3853, 3877, 3881, 3907, 3911, 3919, 3931, 4001, 4003, 4019, 4021, 4051, 4057, 4096, 4111, 4129, 4159, 4201, 4211, 4219, 4229, 4231, 4241, 4243, 4261, 4271, 4327, 4391, 4421, 4441, 4447, 4451, 4481, 4489, 4519, 4561, 4591, 4603, 4621, 4651, 4691, 4751,
		4759, 4789, 4801, 4831, 4861, 4871, 4909, 4931, 4951, 4957, 4999, 5011, 5021, 5041, 5101, 5119, 5167, 5171, 5179, 5209, 5261, 5281, 5329, 5351, 5381, 5419, 5431, 5441, 5449, 5471, 5501, 5503, 5521, 5531, 5581, 5591, 5641, 5659, 5701, 5741, 5743, 5779, 5791, 5801, 5821, 5839, 5851, 5861, 5881, 5923, 5981, 6007, 6011, 6043, 6091, 6121, 6131, 6151, 6163, 6211, 6221, 6241, 6271, 6301, 6361, 6421, 6427, 6451, 6469, 6481, 6521, 6561, 6571, 6581, 6637, 6661, 6679, 6691, 6761, 6763, 6781, 6791, 6841, 6871, 6889, 6961, 6991, 7001, 7039, 7121, 7151, 7177, 7309, 7321, 7351, 7411, 7417, 7451, 7481, 7541, 7549, 7561, 7591, 7621, 7681, 7741, 7841, 7921, 7951, 8011, 8101, 8161, 8171, 8191, 8221, 8231, 8311, 8317, 8419, 8431, 8461, 8501, 8521, 8581, 8641, 8731, 8737, 8761, 8779, 8821, 8941, 8971, 9001, 9091, 9151, 9161, 9181, 9199, 9241, 9311, 9391, 9409, 9421, 9491, 9511, 9521, 9601, 9619, 9631, 9661, 9721, 9769, 9781, 9811, 9829, 9871, 9901, 9931, 10111, 10141, 10151, 10201, 10321, 10459, 10501, 10531, 10609, 10651, 10711, 10771, 10831, 10861, 10891, 11047, 11071, 11131, 11161, 11251, 11257, 11311, 11411, 11491, 11551, 11621, 11701, 11719, 11731, 11821, 11881, 11941, 11971, 12211, 12241, 12301, 12391, 12421, 12433, 12451, 12511, 12541, 12601, 12721, 12769, 12781, 12841, 13171, 13291, 13381, 13399, 13411, 13441, 13567, 13681, 13711, 13831, 13921, 14071, 14221, 14251, 14281, 14341, 14401, 14431, 14551, 14641, 14731, 14771, 14821, 14851, 15031, 15121, 15271, 15331, 15361, 15391, 15451, 15511, 15541, 15601, 15661, 15901, 15991, 16111, 16141, 16231, 16339, 16381, 16651, 16831, 16921, 17011, 17161, 17191, 17341, 17431, 17491, 17581, 17791, 17851, 17911, 18061, 18121, 18181, 18301, 18451, 18481, 18661, 18691, 19081, 19141, 19231, 19321, 19381, 19441, 19471, 19501, 19531, 19801, 19891, 20011, 20101, 20161, 20431, 20551, 20641, 20749, 21001, 21121, 21211, 21481, 21751, 21841, 21871, 21961, 22051, 22111, 22201, 22291, 22441, 22531, 22621, 22741, 22801, 22861, 23011, 23071, 23131, 23311, 23371, 23431, 23761, 23971, 24061, 24091, 24121, 24151, 24181, 24391, 24421, 24481, 24571, 24781, 24841, 25111, 25261, 25411, 25621, 25741, 26041, 26251, 26701, 26821, 26881, 27031, 27091, 27361, 27691, 27751, 27901, 28051, 28081, 28351, 28561, 28711, 28771, 29131, 29191, 29401, 29581, 29611, 29641, 29671, 29881, 30211, 30241, 30391, 30661, 30871, 31081, 31321, 31531, 31981, 32191, 32341, 32371, 32761, 32971, 33091, 33181, 33391, 33811, 34171, 34651, 35281, 35311, 35491, 36061, 36481, 36541, 37171, 37591, 37951, 38611, 38851, 39901, 40111, 40531, 41611, 42331, 42841, 43261, 43891, 44521, 46411, 47881, 51871, 53551, 57121, 60901, 62791, 63211, 65521, 70981 \\
		\hline
		6&5, 7, 11, 13, 17, 19, 23, 25, 29, 31, 37, 41, 43, 47, 49, 53, 59, 61, 67, 71, 73, 79, 83, 89, 97, 101, 103, 107, 109, 113, 121, 127, 131, 137, 139, 149, 151, 157, 163, 169, 179, 181, 191, 199, 211, 229, 241.	
	\end{longtable}
}}
\begin{algorithm}[h]
			{\scriptsize
				\caption{\small Testing the sieving inequality for the existence of primitive $1$-normal pair $(\alpha,\alpha^{-1})$}
				\begin{algorithmic}[1]
					\Require $q$, $n$, $\vartheta$
					\Ensure \texttt{True}, if $q^{n/2-\vartheta}>2W(h)W(d)W(H)\mathcal{S}$ for some $h\mid x^n-1$, $d\mid R$ and $H\mid G$ otherwise, \texttt{False}.
					\Procedure{ \texttt{test_sieve}}{\texttt{q,n},$\vartheta$}
					\State Construct the set $A$ of divisors of $\mathrm{rad}(q^n-1)$ and  set $B$ of divisors of $\frac{\mathrm{rad}(x^n-1)}{x-1}$.
					\For{$d$ in $A$}\For{$H$ in $B$}
					\State $h=H$
					\State Construct the set $L_1$ of prime divisors $\frac{\mathrm{rad}(q^n-1)}{d}$
					\State Construct the set $L_2$ of irreducible factors of $\frac{\mathrm{rad}(x^n-1)}{H(x-1)}$ and the set $L_3=L_2\cup\{x-1\}$
					\State Compute $\mathcal{D}=1-\sum\limits_{p_1 \in L_1}\frac{1}{p_1}-\sum\limits_{p_2 \in L_2}\frac{1}{q^{\mathrm{deg}(p_2)}}-\sum\limits_{p_3 \in L_3}\frac{1}{q^{\mathrm{deg}(p_3)}}$
					\If{$\mathcal{D}>0$}
					\State compute $\mathcal{S}=\frac{\ |L_1|+|L_2|+|L_3|-1}{\mathcal{D}}+2$
					\If{$q^{n/2-\vartheta}>2W(d) W(H)^2\mathcal{S}$}\Comment{Sieving inequality}
					\State \textbf{return} \texttt{True} 		
					\EndIf
					\EndIf
					\EndFor
					\EndFor
					\State \textbf{return} \texttt{False}
					\EndProcedure
			\end{algorithmic}}
		\end{algorithm}
		\begin{algorithm}[t!]
			{\scriptsize
				\caption{\small Explicit search for a primitive $1$-normal pair $(\alpha,\alpha^{-1})$}
				\begin{algorithmic}[1]
					\Require $q$, $n$
					\Ensure \texttt{True}, if a pair $(\alpha,\alpha^{-1})$ of primitive $1$-normal elements exists in $\mathbb{F}_{q^n}$ over $\mathbb{F}_q$, otherwise, \texttt{False}.
					\Procedure{\texttt{direct_search}}{\texttt{q,n}}
					\For{$\beta$ in $\mathbb{F}_{q^n}$}
					\State $\alpha=\beta^{q}-\beta$ 
					\If{$\alpha\neq 0$} 
					\State construct $m_{\alpha}(x)$ and $m_{\alpha^{-1}}(x)$ \Comment{ $m_{\alpha}(x)=\sum_{i=0}^{n-1}\alpha^{q^i}x^{n-i-1}$}
					\If{$\mathrm{deg}(\mathrm{gcd}(m_\alpha(x),x^n-1))=1$ \& $\mathrm{deg}(\mathrm{gcd}(m_{\alpha^{-1}}(x),x^n-1))=1$ \& $\mathrm{ord}(\alpha)=q^n-1$}
					\State \textbf{return} \texttt{True}
					\EndIf
					\EndIf
					\EndFor
					\State \textbf{return} \texttt{False}
					\EndProcedure
			\end{algorithmic}}
		\end{algorithm}
\end{document}